\documentclass[11pt,letterpaper]{amsart}
\setlength{\hoffset}{-1.6cm}
\setlength{\voffset}{-0.5cm}
\addtolength{\textwidth}{3.2cm}
\addtolength{\textheight}{1.5cm}
\usepackage{amsfonts, amsthm, amssymb, amsmath, stmaryrd}
\usepackage{thmtools}
\usepackage[bookmarksnumbered,bookmarksopen]{hyperref}
\usepackage{cleveref}
\usepackage{scalerel,xcolor}
\usepackage{mathrsfs,array}
\usepackage{eucal,color,times,enumerate,accents}
\usepackage{tikz-cd}
\usepackage{bbm}
\usepackage[utf8]{inputenc}
\usepackage{pgfplots}
\pgfplotsset{compat=newest}
\usepackage{lipsum}
\usepackage{graphicx}
\usepackage{rotating}
\usepackage{enumitem}
\usepackage{enumerate}
\usepackage{footmisc}
\usepackage{textcomp}

\hypersetup{colorlinks, linkcolor=black, citecolor=black}
\usepackage[colorinlistoftodos]{todonotes}
\usepackage[left=4cm,right=4cm,top=3cm,bottom=3cm]{geometry}
\usepackage{mathtools}

 \numberwithin{equation}{section}
\def\darrow{\mathrel{\ThisStyle{\ooalign{$\SavedStyle\rightarrow$\cr%
  \hfil\textcolor{white}{\rule{2\LMpt}{1\LMex}}\kern2\LMpt\hfil}}}}
\newtheorem{theorem}[equation]{Theorem}

\newtheorem{proposition}[equation]{Proposition}
\newtheorem{lemma}[equation]{Lemma}

\newtheorem{corollary}[equation]{Corollary}

\theoremstyle{definition}
\newtheorem{definition}[equation]{Definition}

\newtheorem{remark}[equation]{Remark}
\newtheorem{example}[equation]{Example}

\usepackage{tikz}
\usepackage{tikz-cd}
\usepackage{rotating}
\usetikzlibrary{arrows,shapes,positioning}
\usetikzlibrary{decorations.markings}
\tikzstyle arrowstyle=[scale=1]
\usepackage{lipsum}
\usepackage{url}
\setcounter{secnumdepth}{1}
\setcounter{tocdepth}{1}
\usepackage{imakeidx}
\makeindex[title=Index of definitions and notations,intoc]

\newcommand{\Dim}{\textrm{dim}}
\newcommand{\Ker}{\mathrm{Ker}}

\newcommand{\Spec}{\mathrm{Spec}}

\newcommand{\Edge}{\mathrm{Edge}}

\newcommand{\crys}{\mathrm{crys}}

\newcommand{\dR}{\mathrm{dR}}
\newcommand{\Proj}{\mathrm{Proj}}
\newcommand{\CH}{\mathrm{CH}}
\newcommand{\HH}{\mathrm{H}}

\newcommand{\cl}{\mathrm{cl}}

\numberwithin{equation}{section}
\DeclareFontFamily{U}{boondoxuprscr}{\skewchar \font =45}
\DeclareFontShape{U}{boondoxuprscr}{m}{n}{
    <-> BOONDOXUprScr-Regular}{}
\DeclareFontShape{U}{boondoxuprscr}{b}{n}{
    <-> BOONDOXUprScr-Bold}{}
\newcommand\emme{\text{\fontencoding{U}\fontfamily{boondoxuprscr}\fontshape{n}\selectfont m}}
\begin{document}
\title{An Artin--Mumford criterion for conic bundles in characteristic two}
\author{Emiliano Ambrosi and Giuseppe Ancona}
\begin{abstract}
	We prove a characteristic two version of the famous criterion of Artin and Mumford for irrationality of conic bundles. On the one hand, combined with the pathological behaviour of conic bundles in characteristic two, this allows us to construct easier and more explicit examples of irrational conic bundles. On the other hand, degeneration techniques \textit{à la Voisin} allow to deduce irrationality results in characteristic zero. 
\end{abstract}
\maketitle
\tableofcontents
\section{Introduction}
\subsection*{The Artin--Mumford criterion for irrationality of conic bundles}
One of the fundamental problem in algebraic geometry is to understand whether a variety $X$ over an algebraically closed field $k$ is stably rational, i.e. whether $X\times \mathbb P^n$ is birational to $\mathbb P^m$ for some $m,n\in \mathbb N$. One of the first and most influential stable irrationality criterion, has been proved by Artin and Mumford. Since it is the starting point of this paper, we recall it now. 
 \begin{theorem}\cite{AM}\label{thm:AM}
	Let $k$ be an algebraically closed field of characteristic different from $2$. Let  $f:X\rightarrow B$ be a flat conic bundle over $k$  between smooth, proper and connected $k$-varieties. 
Assume the following.
\begin{enumerate}
	 \item The discriminant is disconnected.
	 \item There are two distinct connected components of the discriminant on which all the fibers are reduced and such that the conic bundle over each of them is not a product.
	\item The group $H_{\textrm{\'et}}^3(B,\mathbb Z_2)$ vanishes.
	\end{enumerate}
	 Then   $X$ is not stably rational.
\end{theorem}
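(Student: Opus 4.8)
The plan is to produce a nonzero class in a stable birational invariant of $X$, for which I would use the $2$-torsion of the Brauer group $\mathrm{Br}(X)$, equivalently (for smooth proper $X$) the group $H^2_{\mathrm{nr}}(k(X)/k,\mathbb{Z}/2)$ of unramified classes. Since $\mathrm{Br}$ is a birational invariant of smooth proper varieties (Grothendieck), vanishes on $\mathbb{P}^m$, and satisfies $\mathrm{Br}(X\times\mathbb{P}^n)=\mathrm{Br}(X)$, it vanishes on any stably rational variety; so it suffices to exhibit a single nonzero unramified class, whence $X$ will fail to be stably rational.

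Write $K=k(B)$. The generic fibre is a smooth conic $C/K$ classified by a quaternion class $\alpha\in\mathrm{Br}(K)[2]=H^2(K,\mathbb{Z}/2)$, and $k(X)=K(C)$. Two classical inputs organize the proof. First, by Witt's theorem $\ker\big(H^2(K,\mathbb{Z}/2)\to H^2(k(X),\mathbb{Z}/2)\big)=\langle\alpha\rangle$. Second, by purity $\alpha$ is unramified away from the discriminant, and for each component $\Delta_i$ the residue $\partial_{\Delta_i}(\alpha)\in H^1(k(\Delta_i),\mathbb{Z}/2)$ is the class $[\tilde\Delta_i]$ of the double cover $\tilde\Delta_i\to\Delta_i$ given by the two rulings of the degenerate conic. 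Hypothesis (2) ensures that on two distinct components $\Delta_1,\Delta_2$ these fibres are reduced, so each $[\tilde\Delta_i]$ is the class of an honest \emph{étale} double cover, and that the bundle is not a product there, so each cover is connected, i.e. $[\tilde\Delta_i]\neq 0$.

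Next I would construct a class $\beta\in H^2(K,\mathbb{Z}/2)$ ramified only along $\Delta_1$, with $\partial_{\Delta_1}(\beta)=[\tilde\Delta_1]$ and trivial residues along every other divisor of $B$. To realize this residue datum I must check it lies in the image of the total residue map in the coniveau (Bloch--Ogus) complex of $B$. The necessary reciprocity at codimension two holds because $[\tilde\Delta_1]$, being the class of an étale cover, is itself unramified on $\Delta_1$, and because disconnectedness (hypothesis (1)) places $\Delta_1$ on a connected component of the discriminant meeting no other $\Delta_j$, so there is no cross-term to obstruct realizability. The remaining global obstruction lies in the mod-$2$ part of $H^3_{\textrm{\'et}}(B,\mathbb{Z}_2)$, which vanishes by hypothesis (3); hence such a $\beta$ exists, and it is nonzero since it has nontrivial residue $[\tilde\Delta_1]$.

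Finally I would restrict $\beta$ to $k(X)$. A class pulled back from $K$ is automatically unramified along every divisor of $X$ dominating $B$ (there the valuation is trivial on $K$), so only the vertical divisors matter; away from the discriminant $\beta$ is unramified, and along the reduced irreducible divisor $f^{-1}(\Delta_1)$ the residue of $\beta|_{k(X)}$ is the image of $[\tilde\Delta_1]$ under $H^1(k(\Delta_1),\mathbb{Z}/2)\to H^1(k(f^{-1}(\Delta_1)),\mathbb{Z}/2)$, which factors through the function field of the connected cover $\tilde\Delta_1$ and therefore vanishes. Thus $\beta|_{k(X)}$ is unramified. It is nonzero: otherwise $\beta\in\ker(H^2(K)\to H^2(k(X)))=\langle\alpha\rangle$, forcing $\beta=\alpha$ since $\beta\neq0$, but $\alpha$ ramifies along the second good component $\Delta_2$ (with residue $[\tilde\Delta_2]\neq0$) whereas $\beta$ does not — a contradiction. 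Hence $H^2_{\mathrm{nr}}(k(X)/k,\mathbb{Z}/2)\neq0$ and $X$ is not stably rational. The main obstacle is the realizability step: proving that the prescribed residues are attained by a genuine global class on $K$ — this is exactly where (3) is consumed, through the identification of the obstruction with (the mod-$2$ part of) $H^3_{\textrm{\'et}}(B,\mathbb{Z}_2)$ — together with the delicate local analysis of residues and ramification indices over the degenerate and possibly non-reduced fibres of $f$, needed to justify the residue computation along $f^{-1}(\Delta_1)$.
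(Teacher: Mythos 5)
The paper never proves Theorem \ref{thm:AM}: it is quoted from \cite{AM}, and the method its authors themselves follow --- both in their reading of Artin--Mumford and in the characteristic-two analogue they actually prove (Theorem \ref{thm:AM2}) --- is to exhibit an explicit \emph{algebraic} $2$-torsion class, namely the difference $\cl(\ell_1)-\cl(\ell_2)$ of two half-fibers over the two distinguished components: it is $2$-torsion by a double-cover/monodromy argument, and nonzero because it pairs nontrivially with a class built from a degree-two multisection via the Leray spectral sequence, hypothesis (1) being used to truncate an edge class to a single component and hypothesis (3) to kill the Leray obstruction. Your proposal instead runs the unramified-cohomology argument which the paper attributes to Colliot-Th\'el\`ene--Ojanguren \cite{ColliotOj}: Witt's kernel, residues given by the discriminant double covers, realization of a prescribed residue datum, pullback to $k(X)$, and exclusion of $\beta\in\{0,\alpha\}$. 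This is a genuinely different route, and it is precisely the kind of argument that breaks in characteristic two, which is why the paper develops the other one; most of its steps are sound (the residue of $\beta|_{k(X)}$ dying along $f^{-1}(\Delta_1)$ because the rulings are defined over $\tilde\Delta_1$, unramifiedness at horizontal and non-discriminant divisors, the final contradiction via $\partial_{\Delta_2}$).

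There is, however, a real gap at the step you yourself call the crux. By Bloch--Ogus, the obstruction to realizing your reciprocity-satisfying residue datum by a class in $H^2(K,\mathbb Z/2)$ is $H^1_{\mathrm{Zar}}(B,\mathcal H^2)$, where $\mathcal H^2$ is the Zariski sheaf attached to \'etale $H^2(-,\mathbb Z/2)$; since $E_2^{p,q}=0$ for $p>q$ in the coniveau spectral sequence, this group equals $E_\infty^{1,2}$, a graded piece of $H^3_{\textrm{\'et}}(B,\mathbb Z/2)$ --- with $\mathbb Z/2$-coefficients, not $\mathbb Z_2$-coefficients. These are not interchangeable: universal coefficients give an exact sequence $0\to H^3_{\textrm{\'et}}(B,\mathbb Z_2)/2\to H^3_{\textrm{\'et}}(B,\mathbb Z/2)\to H^4_{\textrm{\'et}}(B,\mathbb Z_2)[2]\to 0$, so hypothesis (3) kills only the left-hand term, and your phrase ``the mod-$2$ part of $H^3_{\textrm{\'et}}(B,\mathbb Z_2)$'' conflates $H^3(B,\mathbb Z_2)/2$ with $H^3(B,\mathbb Z/2)$. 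If $H^4_{\textrm{\'et}}(B,\mathbb Z_2)$ has $2$-torsion, the obstruction group can be nonzero and the existence of $\beta$ is unproved. In Artin--Mumford's own situation $B$ is a rational surface, where $H^4_{\textrm{\'et}}(B,\mathbb Z_2)\simeq\mathbb Z_2$ is torsion free and the two vanishings coincide, so the gap is invisible there; but for the statement as given, over an arbitrary smooth proper connected base, you must either show that the concrete obstruction class (essentially the Gysin image of $[\tilde\Delta_1]$ in $H^3_{\textrm{\'et}}(B,\mathbb Z/2)$) dies, or run the argument under the stronger hypothesis $H^3_{\textrm{\'et}}(B,\mathbb Z/2)=0$.

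A smaller point: you infer connectedness of $\tilde\Delta_i\to\Delta_i$ (equivalently $[\tilde\Delta_i]\neq0$) from ``the conic bundle over $\Delta_i$ is not a product''. Taken literally this implication is false: the cover may split while the two resulting $\mathbb P^1$-bundles, glued along a section, are nontrivial, so the family is not a product and yet $[\tilde\Delta_i]=0$. What your argument needs --- and what hypothesis (2) is intended to mean; the paper's own proof of Lemma \ref{sec : alphatorsion} makes exactly the same reading --- is that monodromy exchanges the two rulings, so you should invoke that formulation rather than derive it from the product condition.
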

Since then, many other techniques have been developed to study rationality problems and we focus here on two main approaches.

On the one hand, Theorem \ref{thm:AM} has been extended  via the introduction of unramified cohomology (with $\mathbb Z/2$-coefficients) by Colliot-Th\'{e}l\`ene and Ojanguren \cite{ColliotOj}. This has been further developed and combined with degenerations techniques \textit{à la Voisin} \cite{Voisin,CTPir}, leading to striking applications (culminating with the work of Schreieder \cite{Stefan}). These techniques proved  to be very powerful, although constructing examples with them becomes tricky.   
Also, these results do not work in characteristic 2, since $2$-torsion \'etale cohomology is quite badly behaved.

On the other hand, techniques using reduction modulo a prime number $p$, especially with  $p=2$, have been developed by Kollar \cite{Kollar} and used by Totaro  \cite{Totaro}, by exploiting pathological behaviour of differential forms in positive characteristic. These techniques have the advantage of having a more geometric flavour and of being more elementary in nature. 
\subsection*{The Artin--Mumford criterion  in characteristic 2}
In this paper, we take a first step in trying to combine both approaches, by extending Theorem \ref{thm:AM} to characteristic 2 and showing that this extension is often easier to apply to concrete examples. Moreover, using degeneration techniques  \textit{à la Voisin}, we deduce irrationality results in characteristic zero; see for instance Theorem \ref{teo concreto intro}. 
Our main result is the following.
\begin{theorem}\label{thm:AM2 intro}
	Let $k$ be an algebraically closed field of characteristic $2$. Let  $f:X\rightarrow B$ be a flat and dominant conic bundle over $k$, with smooth generic fiber, between smooth, proper and connected $k$-varieties. 
	Assume the following.
	\begin{enumerate}
		\item The discriminant is disconnected.
	\item There are two disctinct  connected components of the discriminant of $f$, each of which contains a fiber which is not reduced.
	\item The group $\HH^2(B,\Omega^1_{B/k})$ vanishes.
 
	\end{enumerate}
	 Then  $X$ is not stably rational. 
\end{theorem}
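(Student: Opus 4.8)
The plan is to emulate the Artin--Mumford argument with a characteristic-$2$ replacement for the $2$-torsion Brauer group, whose $\mathbb Z/2$-étale description breaks down here. The substitute is unramified cohomology with coefficients in the logarithmic Kähler sheaf $\nu(1)=\Omega^1_{\log}$, the characteristic-$2$ analogue of $\mu_2$: its value at a field $K$ of characteristic $2$ is the Kato group $H^2_2(K)$, sending a symbol $[a,b\rangle$ to $a\,db/b\in\Omega^1_K$ modulo relations, and this is precisely the group classifying quaternion algebras, hence smooth conics, over $K$. First I would record that $X\mapsto H^2_{\mathrm{nr}}(k(X)/k,\nu(1)):=H^0_{\mathrm{Zar}}(X,\mathcal H^2(\nu(1)))$ is a stable birational invariant vanishing on $\mathbb P^m$; this is formal, since $\nu(\bullet)$ underlies a cycle module with a Gersten resolution, so the group depends only on $k(X)$ and is insensitive to multiplication by $\mathbb P^n$. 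It then suffices to exhibit a nonzero class in this group.

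The nonzero class is not the pullback of the generic conic symbol $\alpha\in H^2_2(k(B))$, which becomes trivial on $X$ because the generic fibre acquires a rational point; rather it is a class $\beta\in H^2_2(k(B))$ whose pullback $f^*\beta$ is unramified on $X$ and nonzero. The mechanism, as in the classical case, is that over a codimension-one point of $B$ lying on a discriminant component $\Delta_i$, passing to $k(X)$ splits exactly the residue $\partial_i\alpha$ of the conic class; hence $f^*\beta$ is unramified on $X$ precisely when each residue $\partial_i\beta$ lies in the subgroup generated by $\partial_i\alpha$. I would therefore look for $\beta$ ramified only along the two distinguished components of hypothesis (2), with residues there equal to the conic data, and show that it is nonzero modulo $\langle\alpha\rangle$ and modulo classes coming from the base.

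The hypotheses feed this construction. The residue $\partial_i\alpha$ is an Artin--Schreier class in $H^1_2(\kappa(\Delta_i))=\kappa(\Delta_i)/\wp$, the characteristic-$2$ analogue of the double cover of the discriminant given by the two rulings; hypothesis (2), the presence of a non-reduced fibre on each of the two components, is what I would use to force this residue to be nontrivial, replacing the ``reduced and non-split'' condition of Theorem \ref{thm:AM}. Hypothesis (1), disconnectedness, then prevents the two residues from being reconciled by a single global relation, so the associated class is nonzero. Finally, hypothesis (3), $H^2(B,\Omega^1_{B/k})=0$, is the analogue of $H^3_{\text{ét}}(B,\mathbb Z_2)=0$: through the exact sequence of étale sheaves $0\to\nu(1)\to\Omega^1_{B/k}\xrightarrow{C^{-1}-1}\Omega^1_{B/k}/d\mathcal O_B\to0$ tying the logarithmic sheaf to coherent differentials, its vanishing controls the relevant $\nu(1)$-cohomology of $B$, which both removes the obstruction to lifting the prescribed residues to an actual class $\beta$ over $k(B)$ and kills the ambiguity of classes pulled back from $B$, so the detected class is genuinely nonzero in $H^2_{\mathrm{nr}}(k(X)/k,\nu(1))$.

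The step I expect to be the main obstacle is the residue computation in characteristic $2$. Unlike the classical setting, the discriminant of a conic bundle is everywhere non-reduced in characteristic $2$, and the ramification theory of Artin--Schreier symbols $[a,b\rangle$ is more delicate than that of Kummer symbols: one must control the interaction of the additive and multiplicative parts of the symbol and convert non-reducedness of a fibre into a precise statement about the logarithmic residue. The second delicate point is making hypothesis (3) do its job rigorously --- identifying exactly which $\nu(1)$-cohomology group of $B$ the coherent vanishing $H^2(B,\Omega^1_{B/k})=0$ controls through the sequence above, and matching it against the Gersten complex computing $H^2_{\mathrm{nr}}$ of $X$.
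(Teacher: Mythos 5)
Your proposal takes a genuinely different route from the paper: you adapt the Colliot-Th\'el\`ene--Ojanguren unramified-cohomology reading of Artin--Mumford to characteristic $2$ via Kato's $H^2_2$ and the sheaf $\nu(1)$, which is essentially the strategy of \cite{Auelconic}. The paper never touches unramified cohomology: it builds the algebraic $2$-torsion class $\alpha=\cl(\ell_1)-\cl(\ell_2)\in\HH^{2d-2}_{\crys}(X)$ as a difference of two \emph{half-fibers} over the distinguished components (\Cref{def alpha}), proves torsionness by pure cycle theory (\Cref{sec : alphatorsion}), proves nonvanishing by pairing against a coherent class built from a degree-$2$ multisection through the Leray spectral sequence (\Cref{lemma beta}, \Cref{lemma cup prod}), and then converts crystalline torsion into failure of the decomposition of the diagonal (\Cref{cor Mimmo}, \Cref{thm:maincriterioninpractice}). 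However, your plan has a genuine gap at its central step, namely the claim that hypothesis (2) forces the residue $\partial_i\alpha$ to be nontrivial. That implication is false unless one makes essential use of the smoothness of $X$, which your sketch never invokes in the residue analysis. Concretely: take $B=\mathbb P^1\times\mathbb P^1$ and the conic bundle $a^2+\gamma\, bc=0$, where $\gamma$ is a section of $\mathcal O(2,0)$ vanishing on two disjoint rulings. The discriminant is the disjoint union of those two curves, every fiber over it is a double line, and $\HH^2(B,\Omega^1_{B/k})=0$; yet the generic fiber contains the rational point $(0:1:0)$, so the conic class $\alpha$ vanishes in $Br(k(B))$, all residues vanish, and the total space is birational to $\mathbb P^1\times B$, hence rational. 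This does not contradict the theorem only because this total space is singular along two curves meeting every double-line fiber --- which is exactly the point: non-reducedness of fibers alone produces no ramification.

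There is a second structural problem hiding in the same step: the non-reduced fibers live over the locus $\Sigma$, which has codimension $\geq 2$ in $B$ (in the paper's examples, two points of $\mathbb P^2$), while the residues you want to control live at the codimension-one generic points of the $D_i$, over which the fibers are crosses. Converting the presence of a double line at a special point into generic nontriviality of the ruling cover is therefore not a residue computation at all, but a geometric local analysis of $X$ near $\Sigma$ that must use smoothness of the total space (this is the content of the local results of \cite{Tanaka} and \cite[Theorem 4.3]{Auelconic} that the paper relies on for other purposes). The paper's construction is designed precisely to sidestep this: over a component containing a double line, twice the reduced fiber \emph{is} the fiber as a cycle, so $2\cl(\ell_i)$ is a fiber class with no nontriviality lemma needed, and smoothness of $X$ enters only through Poincar\'e duality and the coherent computations of \Cref{prop : tecnica}. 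Two further gaps: the wild-ramification issue you flag is real and again not merely technical --- a $2$-torsion Brauer class in characteristic $2$ has a residue in $\kappa(D_i)/\wp$ only when its Swan conductor along $D_i$ vanishes, which itself has to be extracted from the geometry of $X$ --- and your claim that $\HH^2(B,\Omega^1_{B/k})=0$ alone removes the lifting obstruction through $0\to\nu(1)\to\Omega^1\to\Omega^1/d\mathcal O\to 0$ is not justified, since the relevant boundary also involves $\HH^2(B,\Omega^1_{B/k}/d\mathcal O_B)$; by contrast the paper uses the same hypothesis in a transparent way, as the surjectivity of a Leray edge map in \Cref{lemma beta}(3).
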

\subsection{Comparison between the two Artin--Mumford criteria}
Let us enlighten the main differences between Theorem \ref{thm:AM} and Theorem \ref{thm:AM2 intro}. First of all, in our result, the field $k$ is of characteristic 2, so it covers  the case left open by Theorem \ref{thm:AM}. Secondly, and most importantly, the assumptions (2) are different. In practice, checking assumption (2) in Theorem \ref{thm:AM} is quite complicated since one has to understand the behaviour of the fibers in a family, while assumption (2) in Theorem \ref{thm:AM2 intro} is immediate to check in concrete examples. There is also a version of Theorem \ref{thm:AM2 intro} with reduced fibers over the discriminant (see Theorem \ref{thm:AM2}), which is more similar to the original statement of Artin and Mumford, but one shall consider Theorem \ref{thm:AM2 intro} as more useful in practice, because no hypothesis on the nontriviality of the family appears. 

\

One could also state and prove Theorem \ref{thm:AM2 intro} in other characteristics,   but constructing nonreduced fibers there is much harder. Indeed, locally in $B$, the conic bundle has equation 
 \[\alpha a^2+\beta b^2+\gamma c^2+\alpha' bc+\beta' ac+\gamma' ab=0\]
 for some local functions $\alpha,\beta,\gamma,\alpha',\beta',\gamma'$ on $B$. In characteristic two, the locus of nonreduced fiber is simply given by the equations $\alpha'=\beta'=\gamma'=0$, while in characteristic different from $2$ it has a complicated expression (see \cite[Definition 3.6]{Tanaka}). 
 
 \
 
 The invariant used by Artin and Mumford to obstruct rationality is the $2$-torsion in \'etale cohomology. It turns out that the nonzero class they construct is algebraic. In our case we follow their approach and construct a similar $2$-torsion algebraic class. The only cohomology in characteristic two which might have nonzero $2$-torsion is crystalline cohomology, hence we have to work there. This adds various technical complications, since one has to deal with differential forms on conic bundles in characteristic 2. To do this, we will need to work with complexes and to control the difference between relative and asbolute diffrential forms (see Proposition \ref{prop : tecnica}). 
 
\subsection*{Applications}
We construct explicit examples to which Theorem \ref{thm:AM2 intro} applies. Given the special shape of the discriminant in characteristic 2, this can be achieved quite easily.
\begin{theorem}\label{thm:explicit families}
	Let $k$ be a field of characteristic two. Let $B=\mathbb P^2$ with coordinates $x,y,z$.
Consider the conic bundle $f:X\rightarrow B$ defined in  $\mathcal O \oplus \mathcal O(1) \oplus  \mathcal O(3)$ by

 $$
  a^2 +
  x  ab + yz b^2 +
  (x(y^3+z^3)+y^2z^2) bc + (y^6+z^6+x^4yz+xz^5+xy^5) c^2 =0.
$$
Then   $X$ is not stably rational.  
\end{theorem}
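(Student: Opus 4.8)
The plan is to verify the three hypotheses of Theorem~\ref{thm:AM2 intro} for this explicit conic bundle over $B=\mathbb P^2$. First I would check that $f:X\to B$ genuinely is a flat, dominant conic bundle with smooth generic fiber between smooth proper connected varieties; the total space sits inside the projective bundle $\mathbb P(\mathcal O\oplus\mathcal O(1)\oplus\mathcal O(3))$, and the smoothness of $X$ together with smoothness of the generic fiber is a direct (if tedious) Jacobian computation on the given equation. Hypothesis (3), the vanishing of $\HH^2(B,\Omega^1_{B/k})$, is immediate since $B=\mathbb P^2$ and the Hodge numbers of projective space are known: $\HH^2(\mathbb P^2,\Omega^1_{\mathbb P^2})=0$ because the only nonvanishing cohomology of $\Omega^1_{\mathbb P^2}$ is $\HH^1$, which is one-dimensional. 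So the work concentrates on hypotheses (1) and (2).

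The heart of the matter is the discriminant, which by the characteristic-two description recalled in the excerpt is governed by the three cross-term coefficients. In the notation $\alpha a^2+\beta b^2+\gamma c^2+\alpha'bc+\beta'ac+\gamma'ab=0$, here we have $\alpha=1$, $\beta=yz$, $\gamma=y^6+z^6+x^4yz+xz^5+xy^5$, $\alpha'=x(y^3+z^3)+y^2z^2$, $\beta'=0$, $\gamma'=x$. In characteristic two the fiber is nonreduced exactly where $\alpha'=\beta'=\gamma'=0$, so the nonreduced locus is cut out by $x=0$ and $x(y^3+z^3)+y^2z^2=0$, which on the line $x=0$ reduces to $y^2z^2=0$, giving the two points $[0:1:0]$ and $[0:0:1]$. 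These are exactly the candidate witnesses for hypothesis (2). The plan is to compute the actual discriminant divisor (the appropriate determinant/half-discriminant of the quadratic form in characteristic two) explicitly as a plane curve, and then to show it is disconnected with at least two connected components, each containing one of the two nonreduced fibers found above. The design of the coefficients is clearly arranged so the discriminant factors, and I would exhibit the factorization explicitly and check each factor passes through the intended point.

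The main obstacle I anticipate is pinning down the correct notion and computation of the discriminant in characteristic two and verifying the disconnectedness cleanly. Unlike the odd-characteristic case, the discriminant of a conic bundle in characteristic two is subtle: the naive determinant vanishes identically, and one must use the half-discriminant or the invariant that detects degeneracy of the associated quadratic form, as discussed around the equation displayed in the excerpt. I would identify this discriminant precisely (presumably defined earlier in the paper), compute it for the given coefficients, and factor the resulting homogeneous polynomial in $x,y,z$. The delicate point is to confirm that the two components are genuinely distinct connected components rather than two branches of a single connected curve meeting at a point; for this I would check that the factors have no common points (or that any intersection points do not link the components in the relevant sense), so that hypothesis (1) holds and the two nonreduced fibers lie on separate components, giving hypothesis (2).

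Once all three hypotheses are verified, the conclusion that $X$ is not stably rational follows immediately by applying Theorem~\ref{thm:AM2 intro}.
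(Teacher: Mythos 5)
Your plan founders precisely at the point you yourself flag as delicate, and it cannot be repaired within the framework of Theorem \ref{thm:AM2 intro}. Hypothesis (1) of that theorem (disconnected discriminant) can \emph{never} hold for a conic bundle over $B=\mathbb P^2$: the discriminant is a divisor in $\mathbb P^2$, and any two plane curves of positive degree meet by B\'ezout, so such a divisor is automatically connected. Concretely, using the characteristic-two formula of \Cref{discr car 2}, the discriminant of this example is
\[
\Delta \;=\; x^6yz+x^3y^5+x^3z^5+y^4z^4\;=\;(x^3z+y^4)(x^3y+z^4),
\]
which is reducible but connected: both quartic factors vanish at $[1:0:0]$, for instance. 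So your hoped-for check that the factors have no common points is impossible, and the fallback you mention (that the intersection points might not ``link'' the components in the relevant sense) is not a hypothesis of Theorem \ref{thm:AM2 intro} and is not substantiated. Moreover, your preliminary step of verifying smoothness of the total space would also fail: above each point where the two components of $\Delta$ meet, the fiber is a cross, and the local equation of $X$ there is $t_1t_2+bc=0$ (see the proof of \Cref{cor surf}), an ordinary quadratic singularity; so $X$ is singular, and Theorem \ref{thm:AM2 intro} does not even apply to it.

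The paper's actual proof goes through \Cref{cor surf}, which is engineered for exactly this situation. One verifies instead: $\HH^2(\mathbb P^2,\Omega^1_{\mathbb P^2/k})=0$; the discriminant is reducible, with each irreducible component singular only at a double-line point (your computation of the double-line locus $\Sigma=\{[0:1:0],[0:0:1]\}$ via $x=y^2z^2=0$ is correct, and indeed $x^3z+y^4$ is singular only at $[0:0:1]$ and $x^3y+z^4$ only at $[0:1:0]$); the two components meet transversally with crosses above the intersection points; each component is Artin--Mumford in the sense of \Cref{def AM}(1), since each contains exactly one double-line point; and $X$ is smooth near the two double-line fibers. Given this, \Cref{cor surf} blows up $\mathbb P^2$ at the intersection points and performs an elementary transformation (\Cref{separazione}) to produce a conic bundle over the blown-up surface whose discriminant genuinely is disconnected, resolves the ordinary double points of $X$ by a universally $\CH_0$-trivial blow-up, applies \Cref{thm:maincriterioninpractice} to the modified bundle, and concludes by birational invariance of the decomposition of the diagonal (\Cref{diag vois}). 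This separation-and-resolution step (Section 6 together with \Cref{cor surf}) is the essential ingredient missing from your proposal; without it, the verification you outline cannot be completed, no matter how carefully the computations are carried out.
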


From Theorem \ref{thm:explicit families} and the other examples we construct, we deduce the following.
\begin{theorem}\label{teo concreto intro}
	Let $k$ be a field of characteristic zero or two. A very general conic bundle in $\mathbb P (\mathcal O \oplus \mathcal O(1) \oplus  \mathcal O(3))$ over $\mathbb P_k^2$ with values in $\mathcal O$ has no decomposition of the diagonal, hence it is not stably rational. 
	
	Similarly, a very general conic bundle in $\mathbb P (\mathcal O(1) \oplus \mathcal O(1) \oplus  \mathcal O(3))$ over $\mathbb P_k^2$ with values in $\mathcal O(1)$ has no decomposition of the diagonal, hence  it is not stably rational.
\end{theorem}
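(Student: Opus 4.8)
The plan is to reduce both assertions to the stronger statement that the very general member in each of the two families has \emph{no decomposition of the diagonal}, from which stable irrationality follows formally, since a stably rational variety is in particular retract rational and hence admits a decomposition of the diagonal. The basic witnesses will be the characteristic two varieties of Theorem \ref{thm:explicit families} and of its analogue for the second family: one first observes that the argument behind the criterion Theorem \ref{thm:AM2 intro} produces a nonzero $2$-torsion algebraic class which obstructs not merely stable rationality but the existence of a decomposition of the diagonal (this is the usual strengthening in the unramified/$\CH_0$ framework). I would then propagate the absence of such a decomposition in two independent steps: first from this single member to the very general member within a fixed characteristic, and then from characteristic two to characteristic zero by a mixed-characteristic degeneration \textit{à la Voisin} \cite{Voisin,CTPir}.

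For the characteristic two statements I would proceed as follows. Let $P$ be the projective space of coefficients parametrising the family in question, and let $P^{\mathrm{sm}}\subseteq P$ be the open locus over which the total space of the conic bundle is smooth, so that the explicit example of Theorem \ref{thm:explicit families} gives a point $p_0\in P^{\mathrm{sm}}$. By the standard countable union lemma, over an uncountable field the locus $Z=\bigcup_i Z_i\subseteq P^{\mathrm{sm}}$ of members admitting a decomposition of the diagonal is a countable union of closed subvarieties. Since $p_0\notin Z$ and each $Z_i$ is closed, no $Z_i$ can contain the generic point of $P^{\mathrm{sm}}$; hence the generic member, and therefore the very general member, has no decomposition of the diagonal.

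For the characteristic zero statements I would spread the parameter space out over $\Z$: let $P_{\Z}$ be the corresponding integral scheme, with generic point $\eta$ of residue characteristic zero, and note that the characteristic two point $p_0$ is a specialisation of $\eta$. Choosing a discrete valuation ring $\mathcal O$ together with a morphism $\Spec\mathcal O\to P_{\Z}$ sending the generic point to $\eta$ and the closed point to $p_0$ (so that $\mathcal O$ has fraction field of characteristic zero and residue field of characteristic two) yields a flat family whose generic fibre is a characteristic zero member $\mathcal X_\eta$ and whose special fibre is the explicit characteristic two example $X_0$. If the very general characteristic zero member admitted a decomposition of the diagonal, then by the same countable union lemma every member would, in particular $\mathcal X_\eta$; the Colliot-Th\'el\`ene--Pirutka specialisation theorem \cite{CTPir} would then force $X_0$ to admit a decomposition of the diagonal, contradicting the characteristic two conclusion already obtained. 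A final application of the countable union lemma over $\C$ upgrades this to the very general member.

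The main obstacle will be verifying the hypothesis of the specialisation theorem, namely that the singular special fibre $X_0$ admits a \emph{universally $\CH_0$-trivial} resolution. In characteristic two a conic bundle with nonreduced fibres is singular precisely along the nonreduced locus, and one must construct a resolution whose fibres are universally $\CH_0$-trivial; this demands a careful local analysis of the degenerate (double line) conics and is exactly where the pathologies of characteristic two have to be controlled. A secondary and more routine point is the field-theoretic bookkeeping needed to pass between the geometric generic fibre over $\overline{\mathbb Q}$ and the very general member over $\C$. The second family is handled identically, using the corresponding explicit characteristic two witness among the examples constructed alongside Theorem \ref{thm:explicit families}.
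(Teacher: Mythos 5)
Your overall architecture (a characteristic two witness, the countable-union lemma, spreading out over $\Z$, and the specialisation theorem of \cite{CTPir}) is the same as the paper's, but two steps fail as written, and both failures trace back to one misconception about where the explicit example is singular. The conic bundle of Theorem \ref{thm:explicit families} does \emph{not} give a point of $P^{\mathrm{sm}}$, and it does \emph{not} satisfy the hypotheses of Theorem \ref{thm:AM2 intro}: its discriminant $(x^3z+y^4)(x^3y+z^4)$ consists of two irreducible curves in $\mathbb P^2$, which necessarily meet, so the discriminant is connected rather than disconnected; and the total space is singular precisely at the points above these crossings, where the fibres are crosses and the local equation is $t_1t_2+bc=0$ (ordinary double points). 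So your characteristic two argument collapses at the assertion ``$p_0\in P^{\mathrm{sm}}$'', and the claim that the criterion produces a nonzero $2$-torsion class for this witness is unjustified, since the criterion does not apply to it as it stands. The paper fills exactly this gap with \Cref{cor surf}: one resolves the nodes by blowing up points (the exceptional divisors are smooth quadrics, so the resolution is universally $\CH_0$-trivial by \cite[Proposition 1.8]{CTPir}), and, crucially, one separates the discriminant using the birational transformation of \Cref{separazione} (blow-up of the base at the crossings followed by an elementary transformation of the vector bundle), producing a conic bundle with genuinely disconnected discriminant to which \Cref{thm:maincriterioninpractice} applies; one then concludes via birational invariance of the decomposition of the diagonal and \Cref{CTP}. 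Note that \Cref{CTP} is needed even for the characteristic two half of the statement, because the witness is singular: the smooth very general member only \emph{degenerates} to it.

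Relatedly, your ``main obstacle'' paragraph mislocates the singularities: you assert that in characteristic two the total space is singular along the nonreduced (double-line) fibres. That is the characteristic $\neq 2$ intuition and it is exactly backwards here: the examples are constructed so that the total space is \emph{regular} above the double-line points (this is hypothesis (5) of \Cref{cor surf}, and it is the characteristic two pathology the paper exploits), while the only singularities are the finitely many ordinary double points above the discriminant crossings. The resolution is therefore elementary (blow up finitely many points), not a delicate analysis of double-line fibres. Once these two points are repaired — replace ``apply Theorem \ref{thm:AM2 intro} to the witness'' by ``apply \Cref{cor surf} to the witness'' — your degeneration bookkeeping (a DVR with characteristic zero generic point and the explicit example as special fibre, plus the countable-union lemma) recovers the paper's proof of \Cref{teo concreto}.
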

As far as we are aware of, this result is new in characteristic two, but it seems likely that it can be proved  using other obstructions in characteristic zero. We believe that much more interesting examples can be constructed in characteristic two, hence giving new results in  characteristic zero, see for instance Remark \ref{rem ex double line}.
\begin{remark}
A different irrationality criterion for conic bundle in characteristic 2 has been proved in \cite{Auelconic}. Compared to the geometric  Theorem \ref{thm:AM2 intro}, their criterion is closer to the unramified cohomological interpretation of Theorem \ref{thm:AM} given in \cite{ColliotOj}. Since unramified cohomology with 2-torsion coefficients does not work well in characteristic 2, their criterion is quite involved to check in practice and it needs the explicit construction of an auxiliary conic bundle. In Example \ref{ex:Auel}, we show how to interpret their main concrete application via Theorem \ref{thm:AM2 intro}.
 \end{remark}
\subsection*{Organization of the paper}  
Section 2 contains some  conventions. Section 3 recalls the definition of conic bundle and the one of its discriminant. This is  subtle in characteristic two. 
Section 4 is a technical section doing computations on the cohomology of   sheaves of differential forms on conic bundles. These computations are used in Section 5 where the crystalline version of the  Artin--Mumford criterion is stated and proved. 
Section 6 explains a birational transformations which makes the disconnectedness hypothesis  (1) of  Theorem \ref{thm:AM2 intro}  easier to be checked. 
 In Section 7 we recall the notion of the decomposition of the diagonal, its relation to rationality problems and explain why torsion in crystalline cohomology obstructs the decomposition of the diagonal. Putting this discussion together with the crystalline Artin--Mumford criterion, we give   geometric conditions that obstruct rationality. Finally, in Section 8 we give explicit examples where these geometric conditions are satisfied and deduce irrationality results for some very general varieties in characteristic zero and two. We also discuss there examples one might hope to construct  in the future.
\subsection*{Acknowledgements}  We thank Olivier Benoist, Nirvana Coppola, Fabio Bernasconi, Francesc Fit\'e, Lie Fu, Mirko Mauri, John Ottem, Stefan Schreieder and Enric Florit Zacarias for useful discussions.

This research was partly supported by the grant ANR--23--CE40--0011 of \emph{Agence National de la Recherche}. 
\section{Notation and convention}
Throughout the paper we use the following conventions.
\begin{enumerate}
\item A variety is a separated scheme of finite type over a field. When the  field $k$ is specified we call this a $k$-variety.
\item A conic over a field $k$ is a subscheme of $\mathbb P_{k}^2$ defined by a global section of $\mathcal O(2)$. 
\item A conic which is geometrically isomorphic to $xy=0$ in $\mathbb P^2$, is a cross.   A conic which is geometrically isomorphic to $x^2=0$ in $\mathbb P^2$, will be called a double line. See also \Cref{def discr}.
\item If $k$ is a field and $\mathcal F$ is a coherent sheaf on a proper $k$-scheme $S$, we write $h^i_S(\mathcal F)$ for the dimension of the $k$-vector space $\HH^i(S,\mathcal F)$ and we drop the index $S$ if it is clear from the context. 
\item If $X$ is a $k$-variety and $\mathcal E$ is vector bundle of rank $r$ on $X$, we write $\mathbb P(\mathcal E)\rightarrow X$ for the associated projective bundle with fibers $\mathbb P^{r-1}$. 
\item The Chow groups $\CH(X)$ of algebraic cycles modulo rational equivalence of a  variety $X$ has always integral coefficients.
\item For an object $X$ (a variety, a sheaf,$\ldots$) living on some base $B$, we write $X_S$ for the base change of $X$ to $S$ when the map from $S$ to $B$ is clear.
\item For a prime number $p$ and an abelian group $M$, the $p$-torsion of $M$ is denoted by $M[p]$.
\item If $X$ is a variety over a field $k$ of positive characteristic, $\HH^i_{\crys}(X):=\HH^i_{\crys}(X,W(k))$ denotes the crystalline cohomology with integral coefficients. 
\item The projective plane will sometimes be the base and sometimes be a fiber of a fibration. With the hope of clearly distinguishing the two situations, we use $a,b,c$ as variables on the fiber and $x,y,z$ as variables on the base.
\end{enumerate}

\section{Discriminant of conic bundles in characteristic two}
This section gives generalities on conic bundles with particular emphasis to characteristic two. This is all taken from \cite{Auelconic} and \cite{Tanaka} and recalled here for the convenience of the reader. We fix  an algebraically closed field $k$  and  a connected smooth $k$-variety  $B$. 
\subsection*{Quadratic forms on rank $3$ vector bundles}
Let $\mathcal E$ be a rank 3-vector bundle on $B$.
 Let $S_2(\mathcal E)\subset \mathcal E\otimes \mathcal E$ be the set of symmetric tensors and $\mathcal E\otimes \mathcal E\twoheadrightarrow S^2(\mathcal E)$ the symmetric power of $\mathcal E$. By construction, there are identifications $S_2(\mathcal E)^{\vee}\simeq S^2(\mathcal E^{\vee})$.
\begin{definition}
	Let $\mathcal L$ be a line bundle on $B$. A quadratic form on $\mathcal E$ with values in $\mathcal L$ is a morphism $q:S_2(\mathcal E)\rightarrow \mathcal L$. 
\end{definition}
By duality, giving a quadratic form on $\mathcal E$ with values in $\mathcal L$ is equivalent to give a global section of $S^2(\mathcal E^{\vee})\otimes \mathcal L$. 

\begin{remark}Classically, quadratic forms are maps $q:\mathcal E\rightarrow \mathcal L$  satisfying $q(av)=a^2v$ for local sections $a$ of $\mathcal O_B$ and $v$ of $\mathcal E$ and such that the map $\varphi:\mathcal E\times \mathcal E\rightarrow \mathcal L$ defined on local sections by $\varphi(x,y):=q(x+y)-q(y)-q(x)$ is bilinear. To recover this notion from the one defined before, one observes that the natural map sending  $q:S_2(\mathcal E)\rightarrow \mathcal L$ to the map $\widetilde q:E\rightarrow \mathcal L$ defined on local section by $\widetilde q(v)=q(v\otimes v)$, gives a bijection between classical quadratic forms and the one we defined (see e.g. \cite[Proposition 2.6.1]{Wood} or \cite[Lemma 1.1]{Auelclifford}).
\end{remark}


\subsection*{Conic bundles}
Let $q:S_2(\mathcal E)\rightarrow \mathcal L$ be a quadratic form on a vector bundle $\mathcal E$  on $B$ with values in a line bundle $\mathcal L$ on $B$ and let $\pi:\mathbb P(\mathcal E)\rightarrow B$ be the canonical map. 
Since 
$$\HH^0(B, S^2(\mathcal E^{\vee})\otimes \mathcal L)\simeq \HH^0(B, \pi_*(\mathcal O_{\mathbb P(\mathcal E)}(2)\otimes \pi^*\mathcal L))\simeq \HH^0(\mathbb P(\mathcal E), \mathcal O_{\mathbb P(\mathcal E)}(2)\otimes \mathcal \pi^*L),$$
every nonzero quadratic form $q:S_2(\mathcal E)\rightarrow \mathcal L$ defines a closed subscheme $X_{q}\subset \mathbb P(\mathcal E)$ whose natural morphism $f_q:X_q\rightarrow B$ has generic fiber a conic. 
\begin{definition}\label{def conic bundle}
Let  $f:X\rightarrow B$ be a morphism between $k$-varieties. 
 	We say that $f$
is a conic bundle if there exists a rank 3 vector bundle $\mathcal E$ on $B$, a line bundle $\mathcal L$ on $B$ and a quadratic form $q:S_2(\mathcal E)\rightarrow \mathcal L$ such that $f:X\rightarrow B$ is isomorphic to $f_q:X_q\rightarrow B$. In this case, we say that $X$ is a conic bundle in $ \mathbb P(\mathcal E)$ over $B$ with values in $\mathcal{L}$.
\end{definition}
\subsection*{Discriminant and locus of double lines}
Let $a,b,c$ be the coordinates in $\mathbb P^2$. Over an algebraically closed field, up to a change of coordinates, a conic $C$ is defined by one of the following equations  (see e.g. \cite[Corollary 2.5]{Auelconic}):
$$\text{ (1) } \quad a^2+bc;\quad \text{ (2) }\quad  ab;\quad\text{ (3) }\quad  a^2.\quad$$

In case (1), $C$ is smooth. In case (2), $C$ it is the wedge of two $\mathbb P^1$. In case (3), $C$ is irreducible but not reduced.

\begin{definition}\label{def discr}
With the above notations, a conic of type (2) will be called a cross and a conic of type (3) will be called a double line.

Let $f:X\rightarrow B$ be a flat conic bundle. We will denote by $\Delta$ the discriminant of $f$ (the locus of singular fibers) and by $\Sigma$ the locus of double lines of $f$.
\end{definition}
\begin{remark}
	In  \cite[Definition 3.4-3.10]{Tanaka}, Tanaka defined closed subschemes  $\Sigma\subseteq \Delta\subseteq B$  such that $X_b$ is singular if and only if $b\in \Delta$ (i.e. the geometric fiber is of type (2) or (3)) and the geometric fiber is moreover of type (3) if and only if $b\in \Sigma$. 
In general, both  $\Delta$ and  $\Sigma$ might be nonreduced or might  be equal to $B$. If $X\rightarrow B$ is generically smooth, then $\Delta$ is a Cartier divisor in $B$.
For our purposes, we will only need  the reduced structure of $\Delta$ and  $\Sigma$.
\end{remark}
 
\subsection*{Direct sum of line bundles}
Assume that $\mathcal E\simeq \mathcal E_a\oplus \mathcal E_b \oplus \mathcal E_c$ is the direct sum of three line bundles $\mathcal E_i$ and write $\mathcal E_{i,j}:=\mathcal E_i\otimes \mathcal E_j$. In this case we have
$$S_2(\mathcal E)\simeq \mathcal E_{a,a}\oplus \mathcal E_{b,b}\oplus \mathcal E_{c,c}\oplus\mathcal E_{a,b} \oplus\mathcal E_{a,c}\oplus\mathcal E_{b,c}$$
and
$$S_2(\mathcal E^{\vee})\simeq \mathcal E^{\vee}_{a,a}\oplus \mathcal E^{\vee}_{b,b}\oplus \mathcal E^{\vee}_{c,c}\oplus\mathcal E^{\vee}_{a,b} \oplus\mathcal E^{\vee}_{a,c}\oplus\mathcal E^{\vee}_{b,c}$$
Hence to give a quadratic form on $\mathcal E$ with values in $\mathcal L$, is equivalent to give a collection of six global sections $s_{i,j}\in \HH^0(B,\mathcal E_{i,j}^{\vee}\otimes \mathcal L)$ for $i\leq j$. We represent this situation with a "half" matrix:
$$
\begin{matrix}
&\vline & \mathcal E^{\vee}_{a} & \mathcal E^{\vee}_{b} & \mathcal E^{\vee}_c\\
\hline
\mathcal E^{\vee}_{a} &\vline & s_{a,a} &  & \\
\mathcal E^{\vee}_{b}&\vline & s_{a,b}& s_{b,b}&\\
\mathcal E^{\vee}_c &\vline & s_{a,c}& s_{b,c}& s_{c,c}
\end{matrix}
$$
corresponding to the conic bundle of equation
$$
  s_{a,a} a^2+
	   s_{b,b} b^2+s_{c,c} c^2 +  s_{a,b} ab +
  s_{a,c} ac +  s_{b,c} bc =0.
 $$
In this case the discriminant divisor is given by the zero locus of 
$$\Delta= 4s_{a,a}s_{b,b}s_{c,c}+ s_{a,b}s_{b,c} s_{a,c}-s_{a,b}^2s_{c,c}-s_{a,c}^2s_{b,b}-s_{b,c}^2s_{a,a}=0.$$
The locus of double lines $\Sigma$ is much more complicated in general. See \cite[Proposition 3.12]{Tanaka} for an explicit formula. 
\begin{remark}\label{discr car 2}
When $k$ is of characteristic $2$, $\Delta$ simplifies to 
$$ s_{a,b}s_{b,c} s_{a,c}+s_{a,b}^2s_{c,c}+s_{a,c}^2s_{b,b}+s_{b,c}^2s_{a,a}=0.$$
Moreover, the locus $\Sigma$ of double lines is simply given by  the equation $$s_{a,b}=s_{a,c}=s_{b,c}=0.$$
These simplified formulas are the reason for which it is easier to construct examples where   \Cref{thm:AM2 intro} applies than where the original Artin--Mumford criterion applies. 
\end{remark}

\section{Cohomology of conic bundles}
In this section we collect preliminaries on the cohomology of conics and conic bundles. 
We will fix   an algebraically closed field $k$  and  a connected smooth $k$-variety  $B$.
  \begin{lemma}\label{lem : conic}

		Let $i:C  \hookrightarrow   \mathbb P_{k}^2$ be a conic defined by a sheaf of ideal $\mathcal I_C\simeq \mathcal O(-2)$. Then the following computations hold.
			\begin{enumerate}
	\item $h^0_C(\mathcal O_C)=1$ and $h^1_C(\mathcal O_C)=0$.
\item $h^0_C(i^*\Omega^1_{\mathbb P^2/k})=0$ and $h^1_C(i^*\Omega^1_{\mathbb P^2/k})=4$.
\item $h^0_C(i^*\mathcal I_C)=0$ and  $h^1_C(i^*\mathcal I_C)=3$.
	\end{enumerate}
\end{lemma}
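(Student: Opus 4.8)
The plan is to reduce all three assertions to the cohomology of line bundles on $\mathbb{P}_k^2$, which is classical, with Serre duality supplying $\HH^2(\mathbb{P}^2,\mathcal{O}(d))\cong \HH^0(\mathbb{P}^2,\mathcal{O}(-d-3))^\vee$ since $K_{\mathbb{P}^2}=\mathcal{O}(-3)$. The two structural sequences I would lean on are the ideal-sheaf sequence provided by the hypothesis $\mathcal{I}_C\simeq\mathcal{O}(-2)$,
\[
0\to \mathcal{O}(-2)\to \mathcal{O}\to i_*\mathcal{O}_C\to 0,
\]
and the Euler sequence $0\to \Omega^1_{\mathbb{P}^2/k}\to \mathcal{O}(-1)^{\oplus 3}\to \mathcal{O}\to 0$. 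I would stress at the outset that all three computations are insensitive to the isomorphism type of $C$ (smooth conic, cross, or double line): neither sequence nor the line-bundle cohomology sees the reducedness of $C$, so the answer is uniform.

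For (1), I would simply take the long exact sequence of the ideal-sheaf sequence. All cohomology of $\mathcal{O}(-2)$ on $\mathbb{P}^2$ vanishes, the only candidate $\HH^2(\mathcal{O}(-2))$ being dual to $\HH^0(\mathcal{O}(-1))=0$, so $\HH^i(\mathcal{O})\to\HH^i(\mathcal{O}_C)$ is an isomorphism for all $i$, yielding $h^0_C(\mathcal{O}_C)=1$ and $h^1_C(\mathcal{O}_C)=0$. For (3), I would first identify $i^*\mathcal{I}_C=\mathcal{I}_C\otimes_{\mathcal{O}_{\mathbb{P}^2}}\mathcal{O}_C\simeq\mathcal{O}_C(-2)$, then twist the ideal-sheaf sequence by the invertible $\mathcal{O}(-2)$ and apply the projection formula to obtain
\[
0\to \mathcal{O}(-4)\to \mathcal{O}(-2)\to i_*(i^*\mathcal{I}_C)\to 0.
\]
Here the cohomology of $\mathcal{O}(-2)$ vanishes entirely, while for $\mathcal{O}(-4)$ only $\HH^2(\mathcal{O}(-4))\cong\HH^0(\mathcal{O}(1))^\vee$ (dimension $3$) survives; hence $h^0_C(i^*\mathcal{I}_C)=0$ and $h^1_C(i^*\mathcal{I}_C)\cong\HH^2(\mathbb{P}^2,\mathcal{O}(-4))$ has dimension $3$.

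For (2), the point is that the Euler sequence is a short exact sequence of \emph{locally free} sheaves, hence locally split, so applying the right-exact functor $i^*$ in fact preserves exactness on the left, giving
\[
0\to i^*\Omega^1_{\mathbb{P}^2/k}\to \mathcal{O}_C(-1)^{\oplus 3}\to \mathcal{O}_C\to 0.
\]
I would then need $\HH^\bullet(C,\mathcal{O}_C(-1))$, computed exactly as in (3) by twisting the ideal-sheaf sequence by $\mathcal{O}(-1)$: the cohomology of $\mathcal{O}(-1)$ vanishes and only $\HH^2(\mathcal{O}(-3))\cong\HH^0(\mathcal{O})^\vee$ (dimension $1$) contributes, so $h^0_C(\mathcal{O}_C(-1))=0$ and $h^1_C(\mathcal{O}_C(-1))=1$. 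Feeding this together with (1) into the long exact sequence of the pulled-back Euler sequence forces $\HH^0(i^*\Omega^1)=0$ and produces a short exact sequence $0\to \HH^0(\mathcal{O}_C)\to \HH^1(i^*\Omega^1)\to \HH^1(\mathcal{O}_C(-1))^{\oplus 3}\to 0$, whence $h^1_C(i^*\Omega^1)=1+3=4$.

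The computations themselves are routine bookkeeping of which $\HH^2$ on $\mathbb{P}^2$ survives under Serre duality; the only step demanding genuine care is the exactness of the pulled-back Euler sequence when $C$ is nonreduced, which is why I would justify it through local splitting rather than by mere right-exactness of $i^*$.
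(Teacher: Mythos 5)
Your proof is correct and follows essentially the same route as the paper's: the ideal-sheaf sequence (twisted by $\mathcal{O}(-1)$ and $\mathcal{O}(-2)$) for parts (1) and (3), and the pulled-back Euler sequence for part (2), with the same reduction to line-bundle cohomology on $\mathbb{P}^2$. If anything, you are slightly more careful than the paper at the one delicate point — justifying via local splitting that pulling back the Euler sequence to the (possibly nonreduced) conic $C$ stays left exact, a step the paper asserts without comment.
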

\proof
 Part (1)  follows from the exact sequence of coherent sheaves on $\mathbb P^2$
	\begin{equation}\label{eq : defcon}
	0\rightarrow \mathcal O_{\mathbb P^2}(-2)\rightarrow \mathcal O_{\mathbb P^2}\rightarrow i_* \mathcal O_{C}\rightarrow 0	\end{equation}
	 and the fact that
	$$h^i_{\mathbb P^2}(\mathcal O_{\mathbb P^2}(-2))=0 \text{ for } i\geq 0, \quad h^0_{\mathbb P^2}(\mathcal O_{\mathbb P^2})=1,\quad h^1_{\mathbb P^2}(\mathcal O_{\mathbb P^2})=h^2_{\mathbb P^2}(\mathcal O_{\mathbb P^2})=0.$$
	
	Now recall the Euler exact sequence
	$$0\rightarrow \Omega^1_{\mathbb P^2/k}\rightarrow \mathcal O_{\mathbb P^2}(-1)^3\rightarrow \mathcal O_{\mathbb P^2}\rightarrow 0.$$
	Pulling back to $C$, we find an exact sequence 
	$$0\rightarrow i^*\Omega^1_{\mathbb P^2/k}\rightarrow i^*\mathcal O_{\mathbb P^2}(-1)^3\rightarrow \mathcal O_{C}\rightarrow 0$$ 
	so that, thanks to (1), part (2) is reduced to show that 
	$$h^0_C(i^*\mathcal O_{\mathbb P^2}(-1))=0, \quad \text{and}\quad  h^1_C(i^*\mathcal O_{\mathbb P^2}(-1))=1.$$
	These in turn, follows from the exact sequence
	$$0\rightarrow \mathcal O_{\mathbb P^2}(-3)\rightarrow \mathcal O_{\mathbb P^2}(-1)\rightarrow i_*\mathcal O_{C}(-1) \rightarrow 0,$$
	obtained by tensoring (\ref{eq : defcon}) with $\mathcal O_{\mathbb P^2}(-1)$, and the fact that 
		$$\quad h^i_{\mathbb P^2}(\mathcal O_{\mathbb P^2}(-1))=0 \text{ for } i\geq 0,\quad h^0_{\mathbb P^2}(\mathcal O_{\mathbb P^2}(-3))=h^1_{\mathbb P^2}(\mathcal O_{\mathbb P^2}(-3))=0, \quad h^2_{\mathbb P^2}(\mathcal O_{\mathbb P^2}(-3))=1.$$

For (3), tensoring (\ref{eq : defcon}) with $\mathcal I_C \simeq  \mathcal O_{\mathbb P^2}(-2)$ we get an exact sequence 
$$0\rightarrow \mathcal O_{\mathbb P^2}(-4)\rightarrow \mathcal O_{\mathbb P^2}(-2)\rightarrow i_*i^*\mathcal I_C\rightarrow 0,$$
so the conclusion follows from the equalities
$$h^i_{\mathbb P^2}(\mathcal O_{\mathbb P^2}(-2))=0 \text{ for } i\geq 0, \quad h^0_{\mathbb P^2}(\mathcal O_{\mathbb P^2}(-4))=h^1_{\mathbb P^2}(\mathcal O_{\mathbb P^2}(-4))=0,\quad h^2_{\mathbb P^2}(\mathcal O_{\mathbb P^2}(-4))=3.$$
\endproof

We now turn to  some   computations on the cohomology conic bundles. We will make an extensive use of the following classical theorem; see \cite[Chapter II, $\S$5, Corollary 2]{Mumford}. 
\begin{theorem}\label{fact : grauert}
Let $f:X\rightarrow B$ be a flat proper morphism and let $\mathcal F$ be a coherent sheaf over $X$ flat over $B$. If $h^i_{X_p}(\mathcal F_p)$ is constant for every geometric point $p\in B$, then $R^if_*\mathcal F$ is locally free and the natural map
$$R^if_*\mathcal F\otimes k(p)\rightarrow \HH^i(X_p,\mathcal F_p)$$
is an isomorphism. 
\end{theorem}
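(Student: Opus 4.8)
Since the statement is local on the base, the plan is to reduce to the case $B=\Spec A$ with $A$ a reduced Noetherian $k$-algebra (reduced because $B$ is a smooth variety). The crux of the whole argument is a descent-to-finite-free-modules input: for $f$ proper and $\mathcal F$ coherent and flat over $B$, there is a bounded complex $K^\bullet:0\to K^0\to\cdots\to K^n\to 0$ of finitely generated free $A$-modules, functorial in $A$, together with natural isomorphisms $\HH^i(X_{A'},\mathcal F_{A'})\cong \HH^i(K^\bullet\otimes_A A')$ for every $A$-algebra $A'$. I would obtain $K^\bullet$ by taking the \v{C}ech complex of $\mathcal F$ along a finite affine cover of $X$ (whose terms are flat over $A$ since $\mathcal F$ is $B$-flat and the cover is affine), and then replacing this bounded complex of flat $A$-modules with finitely generated cohomology by a quasi-isomorphic bounded complex of finite free modules in a way preserved by $-\otimes_A A'$. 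Under this identification one has $R^if_*\mathcal F=\HH^i(K^\bullet)$, and the base change map in the statement becomes the tautological comparison $\HH^i(K^\bullet)\otimes_A k(p)\to \HH^i(K^\bullet\otimes_A k(p))$.

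Granting the complex, I would translate the hypothesis into linear algebra over the residue fields. Writing $d^i:K^i\to K^{i+1}$ and $\rank_p(d^i)$ for the rank of $d^i\otimes k(p)$ over $k(p)$, the identification gives $h^i_{X_p}(\mathcal F_p)=\dim_{k(p)}\HH^i(K^\bullet\otimes k(p))=\dim_{k(p)}K^i-\rank_p(d^{i-1})-\rank_p(d^i)$. Each rank function is lower semicontinuous on $B$, since the locus where a fixed-size minor of $d^i$ is invertible is open; hence $h^i_{X_p}(\mathcal F_p)$ is upper semicontinuous, which is the semicontinuity theorem recorded as a byproduct.

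The heart of the matter is that constancy of $h^i$ forces the relevant ranks to be locally constant. Fix $p_0$; for $p$ near $p_0$ lower semicontinuity gives $\rank_p(d^{i-1})\ge \rank_{p_0}(d^{i-1})$ and $\rank_p(d^i)\ge \rank_{p_0}(d^i)$, while constancy of $h^i$ forces $\rank_p(d^{i-1})+\rank_p(d^i)=\rank_{p_0}(d^{i-1})+\rank_{p_0}(d^i)$; the two inequalities must then both be equalities, so $\rank(d^{i-1})$ and $\rank(d^i)$ are locally constant near $p_0$. At this point I would invoke the standard linear-algebra lemma over a reduced ring: a map $u$ of finite free $A$-modules with locally constant pointwise rank has $\Ker u$, $\Image u$ and $\Coker u$ finite locally free, with formation commuting with arbitrary base change. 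Applying this to $d^i$ and $d^{i-1}$ makes $Z^i=\Ker d^i$ and $B^i=\Image d^{i-1}$ locally free local direct summands whose formation commutes with $-\otimes k(p)$; exactness of $0\to B^i\to Z^i\to H^i\to 0$ is then preserved by $\otimes k(p)$, yielding simultaneously that $H^i=R^if_*\mathcal F$ is locally free and that $H^i\otimes k(p)\to \HH^i(K^\bullet\otimes k(p))=\HH^i(X_p,\mathcal F_p)$ is an isomorphism.

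The main obstacle is the first step: producing the finite free complex $K^\bullet$ with base-change-compatible cohomology for a general proper (not a priori projective) morphism $f$. Once this fundamental complex is in hand, the remainder is the semicontinuity bookkeeping and the reduced-ring lemma above, and the reducedness of $B$, automatic here since $B$ is a smooth variety, is precisely what upgrades constant fibre dimension to genuine local freeness together with base change.
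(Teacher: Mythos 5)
The paper itself gives no proof of this statement---it is quoted as a classical theorem with a citation to Mumford's \emph{Abelian Varieties} (Chapter II, \S 5, Corollary 2)---and your argument is exactly the standard proof found in that reference: construct the bounded complex $K^\bullet$ of finite free $A$-modules computing cohomology compatibly with all base changes (\v{C}ech complex plus the finiteness theorem for proper morphisms, then the replacement lemma), use semicontinuity of the ranks of the differentials, and invoke the reduced-base lemma to upgrade locally constant ranks to local freeness of kernels, images and cokernels with base change. So your proposal is correct and follows the same route as the paper's source; your observation that reducedness of $B$ (automatic here since $B$ is a fixed smooth variety) is genuinely needed is also accurate, as Grauert's theorem fails over non-reduced bases.
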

\begin{proposition}\label{prop : tecnica}
Let $X$ be a smooth $k$-variety and let $f:X\rightarrow B$ be a flat conic bundle with smooth generic fiber. Then 
	\begin{enumerate}
		\item $f_*\mathcal O_X=\mathcal O_B$ and $R^if_*\mathcal O_X=0$ for $i>0$;
		\item The right exact sequence 
\[			f^*\Omega^1_{B/k}\rightarrow \Omega^1_{X/k}\rightarrow \Omega^1_{X/B}\rightarrow 0.
\]	is also left exact. 
	\item $f_*\Omega^1_{X/B}=0$
		\item The natural maps 
		$$\Omega^1_{B/k}\rightarrow f_*\Omega^1_{X/k}\quad \text{and}\quad R^1f_*\Omega^1_{X/k} \rightarrow   R^1f_*\Omega^1_{X/B}$$  
		are isomorphism.
	\end{enumerate}
\end{proposition}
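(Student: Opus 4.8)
The plan is to deduce the whole statement from cohomology and base change (Theorem~\ref{fact : grauert}) together with the fibrewise computations of Lemma~\ref{lem : conic}, working throughout with the embedding $i\colon X\hookrightarrow P:=\mathbb P(\mathcal E)$ coming from the definition of a conic bundle, with $\pi\colon P\to B$ and $f=\pi\circ i$. First I would prove (1): each geometric fibre $X_b$ is a conic with ideal $\mathcal O(-2)$, so Lemma~\ref{lem : conic}(1) gives $h^0_{X_b}(\mathcal O_{X_b})=1$, $h^1_{X_b}(\mathcal O_{X_b})=0$, constant in $b$. By Theorem~\ref{fact : grauert}, $R^1f_*\mathcal O_X$ is locally free of rank $0$ hence vanishes ($R^{\geq 2}f_*\mathcal O_X=0$ as the fibres are curves), while $f_*\mathcal O_X$ is a line bundle; the natural map $\mathcal O_B\to f_*\mathcal O_X$ is an isomorphism on every fibre, hence an isomorphism. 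In particular $X$ is connected, so, being smooth, integral. For (2) the sequence is the cotangent sequence of $f$, hence right exact, and I only need injectivity of $f^*\Omega^1_{B/k}\to\Omega^1_{X/k}$: since $B$ is smooth this is a map from a locally free (hence torsion-free) sheaf on the integral scheme $X$, and it is injective over the dense locus where $f$ is smooth, so its kernel is a torsion subsheaf of a torsion-free sheaf, thus $0$.

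The crux is (3). Here $X$ is an effective Cartier divisor in $P$ with $\mathcal I_X\simeq\mathcal O_P(-2)\otimes\pi^*\mathcal L^\vee$, so the conormal sheaf $i^*\mathcal I_X$ is a line bundle, and I claim the relative conormal sequence $i^*\mathcal I_X\xrightarrow{\alpha}i^*\Omega^1_{P/B}\to\Omega^1_{X/B}\to 0$ is even left exact: $\ker\alpha$ is a subsheaf of a line bundle on integral $X$, and $\alpha$ is injective at the generic point (where the fibre is the smooth generic conic), so $\ker\alpha$ is torsion-free and generically $0$, hence $0$. Restricting to a geometric fibre, $(i^*\Omega^1_{P/B})|_{X_b}=i^*\Omega^1_{\mathbb P^2/k}$ and $(i^*\mathcal I_X)|_{X_b}=i^*\mathcal O_{\mathbb P^2}(-2)$, so Lemma~\ref{lem : conic}(2),(3) give vanishing $h^0_{X_b}$ for both and $h^1_{X_b}=4,3$ respectively. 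By Theorem~\ref{fact : grauert}, $f_*i^*\Omega^1_{P/B}=f_*i^*\mathcal I_X=0$, while $R^1f_*i^*\mathcal I_X$ and $R^1f_*i^*\Omega^1_{P/B}$ are locally free of ranks $3$ and $4$. Pushing forward the short exact sequence then identifies $f_*\Omega^1_{X/B}$ with $\ker\gamma$, where $\gamma:=R^1f_*(\alpha)\colon R^1f_*i^*\mathcal I_X\to R^1f_*i^*\Omega^1_{P/B}$.

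I expect the main obstacle to lie exactly at this last point. One cannot finish by a naive fibrewise vanishing, because over the degenerate fibres (crosses and double lines) the sheaf $\Omega^1_{X_b/k}$ carries torsion supported at the singular points, so $h^0_{X_b}(\Omega^1_{X_b/k})\neq 0$, and correspondingly $\gamma\otimes k(b)=H^1(\alpha|_{X_b})$ genuinely has a kernel there. The resolution is that $\ker\gamma$ is the sheaf kernel, not the fibrewise one: it is a subsheaf of the locally free sheaf $R^1f_*i^*\mathcal I_X$, hence torsion-free, and at the generic point $\eta$ the fibre $X_\eta$ is a smooth conic with $H^0(\Omega^1_{X_\eta})=0$, so $(\ker\gamma)_\eta=\ker(\gamma_\eta)=0$. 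A torsion-free sheaf that vanishes generically is zero, whence $f_*\Omega^1_{X/B}=\ker\gamma=0$.

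Finally, (4) is formal given (1)--(3): applying $f_*$ to the short exact sequence of (2) and using the projection formula $R^if_*(f^*\Omega^1_{B/k})\simeq\Omega^1_{B/k}\otimes R^if_*\mathcal O_X$ together with (1) (so $f_*(f^*\Omega^1_{B/k})=\Omega^1_{B/k}$ and $R^{\geq 1}f_*(f^*\Omega^1_{B/k})=0$), the long exact sequence splits into $0\to\Omega^1_{B/k}\to f_*\Omega^1_{X/k}\to f_*\Omega^1_{X/B}\to 0$ and $0\to R^1f_*\Omega^1_{X/k}\to R^1f_*\Omega^1_{X/B}\to 0$. The second already yields the isomorphism $R^1f_*\Omega^1_{X/k}\simeq R^1f_*\Omega^1_{X/B}$, and combining the first with the vanishing $f_*\Omega^1_{X/B}=0$ from (3) yields $\Omega^1_{B/k}\simeq f_*\Omega^1_{X/k}$; both maps so produced are the natural ones.
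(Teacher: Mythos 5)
Your proof is correct and follows essentially the same route as the paper's: cohomology and base change (Theorem~\ref{fact : grauert}) plus the fibrewise computations of Lemma~\ref{lem : conic}, the conormal sequence of $X\subset\mathbb P(\mathcal E)$ for part (3), and the projection formula for part (4). The only cosmetic difference is that where the paper checks vanishing/injectivity on the open locus where $f$ is smooth and invokes local freeness, you localize at the generic point and use torsion-freeness of subsheaves of locally free sheaves --- the same argument in different words, and your explicit remark about why a naive fibrewise argument fails at the singular fibers is exactly the subtlety the paper's restriction-to-an-open step is designed to circumvent.
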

\proof
 
 Part (1)  follows directly from Theorem \ref{fact : grauert} and Lemma \ref{lem : conic}(1) (see also \cite[Lemma 2.5]{Tanaka}).
 
 For (2), we have to show that the map $f^*\Omega^1_{B/k}\rightarrow \Omega^1_{X/k}$ is injective. Since both sheaves are locally free, it is enough to check injectivity after the restriction to  an open subset of $X$. As $f:X\rightarrow B$ is generically smooth, we can assume that $f$ is smooth and in this case the sequence is also left exact (see e.g. \cite[Tag 02K4]{stacks-project}).
 
Let us now show (3). Let $\mathcal E$ be a rank 3 vector bundle on $B$ as in \Cref{def conic bundle}.  In particular, there is a closed immersion $i:X\rightarrow \mathbb P(\mathcal E)$ over $B$, such that, for  every geometric point $p\in B$, the inclusion $i_p:X_p\rightarrow \mathbb P(\mathcal E)_p=\mathbb P^2$ is the anticanonical embedding of $X_p$. Let $\mathcal I_X$ be the sheaf ideal of $X$ in $\mathbb P(\mathcal E)$. We have a right exact sequence
	$$i^*\mathcal I_X\rightarrow i^*\Omega^1_{\mathbb P(\mathcal E)/B}\rightarrow \Omega^1_{X/B}\rightarrow 0$$
	by \cite[Tag 01UZ]{stacks-project}.
	We claim that it is also left exact, i.e. that $i^*\mathcal I_X\rightarrow i^*\Omega^1_{\mathbb P(\mathcal E)/B}$
	is injective. Indeed, since $X\subseteq \mathbb P(\mathcal E)$ is a local complete intersection, $i^*\mathcal I_X$ is locally free (\cite[Tag 06B9]{stacks-project}). Since also  $\Omega^1_{\mathbb P(\mathcal E)/B}$ is locally free, it is enough to show that $i^*\mathcal I_X\rightarrow i^*\Omega^1_{\mathbb P(\mathcal E)/B}$ is injective on an open subset. Hence,   we can assume that $f$ is smooth, where the conclusion follows from \cite[Tag 06AA]{stacks-project}. 
	
We can now pushforward via $f:X\rightarrow B$ the short exact sequence
		$$0\rightarrow i^*\mathcal I_X\rightarrow i^*\Omega^1_{\mathbb P(\mathcal E)/B}\rightarrow \Omega^1_{X/B}\rightarrow 0$$
		to find a long exact sequence
		\[
			0\rightarrow f_*i^*\mathcal I_X\rightarrow f_*i^*\Omega^1_{\mathbb P(\mathcal E)/B} \rightarrow f_*\Omega^1_{X/B}\rightarrow R^1 f_*i^*\mathcal I_X\rightarrow R^1f_*i^*\Omega^1_{\mathbb P(\mathcal E)/B}\rightarrow R^1f_*\Omega^1_{X/B}\rightarrow 0.	\]
							By  Theorem \ref{fact : grauert}  and Lemma \ref{lem : conic}(2), $f_*i^*\Omega^1_{\mathbb P(\mathcal E)/B}$   vanishes. 
				Hence $f_*\Omega^1_{X/B}$ identifies with the kernel of the map				$$R^1f_*i^*\mathcal I_X\rightarrow R^1f_*i^*\Omega^1_{\mathbb P(\mathcal E)/B}.$$
				We want to show that the map is injective. Since $X\rightarrow B$ is flat and $\mathcal I_X$ is flat over $B$, the restriction of $\mathcal I_X$ to $\mathbb P(\mathcal E)_p$  identifies with the ideal defining $X_p$ in $\mathbb P(\mathcal E)_p$. Hence, by Lemma \ref{lem : conic} and Theorem \ref{fact : grauert}, the coherent sheaves $R^1f_*i^*\mathcal I_X$ and $R^1f_*i^*\Omega^1_{\mathbb P(\mathcal E)/B}$
				are locally free, so it is enough to show injectivity on an open, hence to show the vanishing of $f_*\Omega^1_{X/B}$ on an open. We can then assume that $f$ is smooth, in which case the fibers of $f_*\Omega^1_{X/B}$ identifies with  $\HH^0(\mathbb P^1, \Omega^1_{\mathbb P^1/k})=0$, so we conclude 
				 again by Theorem \ref{fact : grauert}.

	Finally, let us now show (4). By (2), we have a short exact sequence 	$$0\rightarrow f^*\Omega^1_{B/k}\rightarrow \Omega^1_{X/k}\rightarrow \Omega^1_{X/B}\rightarrow 0.$$
	Pushing forward, we get a long exact sequence
	$$0\rightarrow f_*f^*\Omega^1_{B/k}\rightarrow f_*\Omega^1_{X/k}\rightarrow f_*\Omega^1_{X/B}\rightarrow R^1f_*f^*\Omega^1_{B/k}\rightarrow R^1f_*\Omega^1_{X/k}\rightarrow R^1f_*\Omega^1_{X/B}\rightarrow 0.$$
	By the projection formula and (1)
	$$f_*f^*\Omega^1_{B/k}\simeq \Omega^1_{B/k}\otimes f_*\mathcal O_X\simeq \Omega^1_{B/k} \quad \text{and}\quad R^1f_*f^*\Omega^1_{B/k}\simeq \Omega^1_{B/k}\otimes R^1f_*\mathcal O_X=0,$$
	hence $R^1f_*\Omega^1_{X/k}\simeq R^1f_*\Omega^1_{X/B}$ and there is a short exact sequence
	$$0\rightarrow \Omega^1_{B/k}\rightarrow f_*\Omega^1_{X/k}\rightarrow f_*\Omega^1_{X/B}\rightarrow 0.$$
	So the conclusion follows from (3). 
\endproof
\begin{corollary}\label{rottura di cazzo}
Keep notation from the above theorem.  Then the following holds.
	\begin{enumerate}
		\item $ h^i_B(\mathcal O_B) = h^i_X(\mathcal O_X)$  for all $i>0$ and  $h^0_B(\Omega^1_{B/k})=  h^0_X(\Omega^1_{X/k})$.
		\item The natural map $  \HH^0(B,\Omega^i_{B/k}) \rightarrow \HH^0(X,\Omega^i_{X/k})$ is injective for all $i>0$.
		\end{enumerate}
\end{corollary}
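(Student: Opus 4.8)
The plan is to deduce both statements directly from Proposition \ref{prop : tecnica}, using the Leray spectral sequence together with the sheaf-level isomorphisms established there. The key observation is that part (1) of the Proposition says $f_*\mathcal O_X = \mathcal O_B$ and $R^if_*\mathcal O_X = 0$ for $i>0$, which means the Leray spectral sequence for $\mathcal O_X$ degenerates and yields $\HH^i(B,\mathcal O_B) \simeq \HH^i(X,\mathcal O_X)$ for all $i$; this gives the first equality in (1). Similarly, part (4) gives an isomorphism $\Omega^1_{B/k} \xrightarrow{\sim} f_*\Omega^1_{X/k}$ of sheaves on $B$, so for the cohomological statement on $H^0$ it suffices to recall that $\HH^0(X,\Omega^1_{X/k}) = \HH^0(B, f_*\Omega^1_{X/k})$ by the definition of pushforward, which then equals $\HH^0(B,\Omega^1_{B/k})$.

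For the second equality in (1), concerning $\Omega^1$, I would proceed the same way: since $f_*\Omega^1_{X/k} \simeq \Omega^1_{B/k}$ by part (4), taking global sections immediately gives $h^0_X(\Omega^1_{X/k}) = h^0_B(\Omega^1_{B/k})$. Thus all of part (1) reduces to the low-degree identifications furnished by the Proposition, plus the standard fact that $\HH^0(X,\mathcal F) = \HH^0(B,f_*\mathcal F)$.

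For part (2), the claim is that $\HH^0(B,\Omega^i_{B/k}) \to \HH^0(X,\Omega^i_{X/k})$ is injective for all $i>0$. The natural source of such a map is the pullback $f^*$ on differential forms, which induces $f^*\Omega^i_{B/k} \to \Omega^i_{X/k}$ and hence on global sections $\HH^0(B,\Omega^i_{B/k}) \to \HH^0(B, f_*\Omega^i_{X/k}) = \HH^0(X,\Omega^i_{X/k})$. To prove injectivity, I would use that the composite $\Omega^i_{B/k} \to f_*f^*\Omega^i_{B/k} \to f_*\Omega^i_{X/k}$ is a map of sheaves on $B$, and that the adjunction unit $\Omega^i_{B/k} \to f_*f^*\Omega^i_{B/k}$ is an isomorphism by the projection formula together with $f_*\mathcal O_X = \mathcal O_B$ from part (1). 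The remaining point is that $f^*\Omega^i_{B/k} \to \Omega^i_{X/k}$ is injective; this follows from taking the $i$-th exterior power of the injection $f^*\Omega^1_{B/k} \hookrightarrow \Omega^1_{X/k}$ of part (2), which is a sub-bundle inclusion since the quotient $\Omega^1_{X/B}$ is locally free of the expected rank away from the discriminant, so the wedge power remains injective.

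The main obstacle I anticipate is the last point: verifying that $f^*\Omega^i_{B/k} \to \Omega^i_{X/k}$ is injective in codimension that controls the map on global sections. Since $X$ is smooth and the forms in the image of $f^*$ are pulled back from $B$, a nonzero global section of $\Omega^i_{B/k}$ is generically nonzero, and the injectivity of $f^*\Omega^1_{B/k}\hookrightarrow \Omega^1_{X/k}$ (hence of its wedge powers over the smooth locus of $f$, which is dense) guarantees that its pullback cannot vanish identically on $X$. I would therefore phrase the injectivity on global sections via the generic fiber, where $f$ is smooth and the exterior powers of a split injection of locally free sheaves stay injective, and then conclude that a global form pulling back to zero must already be zero on $B$.
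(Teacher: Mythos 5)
Your proposal is correct and follows essentially the same route as the paper: part (1) via the Leray spectral sequence together with Proposition \ref{prop : tecnica}(1) and the isomorphism $\Omega^1_{B/k}\simeq f_*\Omega^1_{X/k}$ from part (4), and part (2) by taking exterior powers of the injection $f^*\Omega^1_{B/k}\hookrightarrow\Omega^1_{X/k}$ of Proposition \ref{prop : tecnica}(2) and then applying the projection formula with $f_*\mathcal O_X=\mathcal O_B$ to pass to global sections. Your extra care in justifying that the wedge powers stay injective (sub-bundle inclusion over the smooth locus, plus torsion-freeness of the locally free sheaf $f^*\Omega^i_{B/k}$ on the integral scheme $X$) makes explicit what the paper leaves implicit in the phrase ``since these sheaves are locally free, we can pass to exterior powers.''
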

\proof
  Part (1) follows from the Leray spectral sequence for $f:X\rightarrow B$ and Proposition \ref{prop : tecnica}(1-3-4).
  
  Let us now show (2). By Proposition \ref{prop : tecnica}(2) the natural map  
  \[f^*\Omega^1_{B/k} \hookrightarrow\Omega^1_{X/k}.\]
  is injective. Since  these sheaves   are locally free, we can pass to exterior powers and deduce that
 the natural map  
 \[f^*\Omega^i_{B/k} \hookrightarrow\Omega^i_{X/k}.\]
  is injective. By the projection formula and Proposition \ref{prop : tecnica}(1), taking global sections gives the conclusion.
\endproof

\begin{corollary}\label{cor : sembrapocointeressante}
Keep notation from the above theorem. Assume in addition that $h^i_B(\mathcal O_B)$ and $h^0_B(\Omega^i_{B/k}) $ vanish for $i>0$. Then the following holds.
\begin{enumerate}
	\item $h^i_X(\mathcal O_X)=0$ for every $i>0$ and $h^0_X(\Omega^1_{X/k})=0$
	\item If one has also $h^0_X(\Omega^i_{X/k})=0$ for $i>0$ then $\HH^1(X,\Omega^1_{X/k})=\HH^2_{\dR}(X)$.
\end{enumerate}
\end{corollary}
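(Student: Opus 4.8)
I would dispose of part (1) in a line using \Cref{rottura di cazzo}: it gives $h^i_X(\mathcal O_X)=h^i_B(\mathcal O_B)$ for $i>0$ and $h^0_X(\Omega^1_{X/k})=h^0_B(\Omega^1_{B/k})$, and both right-hand sides vanish by hypothesis (the latter being the case $i=1$ of the assumed vanishing of $h^0_B(\Omega^i_{B/k})$). All the effort goes into (2).

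For (2) the plan is to run the Hodge--de Rham spectral sequence
\[
E_1^{p,q}=\HH^q(X,\Omega^p_{X/k})\ \Longrightarrow\ \HH^{p+q}_{\dR}(X).
\]
Part (1) together with the extra hypothesis $h^0_X(\Omega^i_{X/k})=0$ ($i>0$) makes the entire bottom row $E_1^{p,0}$ ($p\geq1$) and the entire left column $E_1^{0,q}$ ($q\geq1$) vanish. In particular $E_1^{2,0}=\HH^0(X,\Omega^2_{X/k})=0$ and $E_1^{0,2}=\HH^2(X,\mathcal O_X)=0$, so among the graded pieces of the Hodge filtration on $\HH^2_{\dR}(X)$ only $E_\infty^{1,1}$ can be nonzero, and $\HH^2_{\dR}(X)\cong E_\infty^{1,1}$.

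I would then compute $E_\infty^{1,1}$ by tracking the differentials through the spot $(1,1)$. Every differential arriving at $(1,1)$ either starts at $E_1^{0,1}=\HH^1(X,\mathcal O_X)=0$ or, on later pages, comes from a negative column, so all incoming differentials vanish; hence $E_\infty^{1,1}$ is a \emph{subspace} of $E_1^{1,1}=\HH^1(X,\Omega^1_{X/k})$, which already yields a canonical injection $\HH^2_{\dR}(X)\hookrightarrow\HH^1(X,\Omega^1_{X/k})$. Among the differentials leaving $(1,1)$, the maps $d_r$ with $r\geq2$ land in subquotients of $\HH^{2-r}(X,\Omega^{1+r}_{X/k})$, which vanish (the borderline case $r=2$ lands in a subquotient of $\HH^0(X,\Omega^3_{X/k})=0$). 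Thus only $d_1\colon\HH^1(X,\Omega^1_{X/k})\to\HH^1(X,\Omega^2_{X/k})$ survives, and
\[
\HH^2_{\dR}(X)\ \cong\ \ker\!\left(d_1\colon\HH^1(X,\Omega^1_{X/k})\to\HH^1(X,\Omega^2_{X/k})\right),
\]
so the asserted equality is equivalent to the vanishing of this one map.

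To kill $d_1$ I would aim to prove $\HH^1(X,\Omega^2_{X/k})=0$, exploiting the conic bundle: pushing forward the exact sequence of \Cref{prop : tecnica}(2) along $f$ and using the projection formula together with \Cref{prop : tecnica}(1),(3),(4) expresses the relevant cohomology in terms of cohomology on $B$ of $\Omega^\bullet_{B/k}$ and of (a twist of) $R^1f_*\Omega^1_{X/B}$, where the cohomological hypotheses on $B$ can be brought to bear. I expect this to be the main obstacle: everything up to and including the injection $\HH^2_{\dR}(X)\hookrightarrow\HH^1(X,\Omega^1_{X/k})$ is purely formal, whereas forcing the outgoing de Rham differential to vanish is exactly the point where the Frölicher spectral sequence -- which in general does not degenerate in characteristic two -- must be controlled, and where the sheaf $R^1f_*\Omega^1_{X/B}$, whose failure to be locally free is concentrated along the discriminant, has to be understood.
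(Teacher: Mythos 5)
Your part (1) coincides with the paper's. Your reduction in part (2) via the Hodge--de Rham spectral sequence is correct as far as it goes: the hypotheses do give $\HH^2_{\dR}(X)\cong E_\infty^{1,1}=\ker\bigl(d_1\colon\HH^1(X,\Omega^1_{X/k})\to\HH^1(X,\Omega^2_{X/k})\bigr)$. But the proof stops exactly at the point that carries all the content, and the route you propose to close it --- proving $\HH^1(X,\Omega^2_{X/k})=0$ --- cannot work. That vanishing does not follow from the hypotheses, and it is actually \emph{false} in the cases the corollary is designed for: when $X$ is a threefold (e.g.\ the conic bundles over $\mathbb P^2$ of Section 8), Serre duality gives $\HH^1(X,\Omega^2_{X/k})\cong\HH^2(X,\Omega^1_{X/k})^\vee$, and since the paper produces nonzero $2$-torsion in $\HH^3_{\crys}(X)$, universal coefficients force $\HH^3_{\dR}(X)\neq 0$; as $h^3(\mathcal O_X)=h^0(\Omega^3_{X/k})=0$, the Frölicher spectral sequence then forces $\HH^2(X,\Omega^1_{X/k})$ and $\HH^1(X,\Omega^2_{X/k})$ to be nonzero. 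So the target of $d_1$ is genuinely nonzero; what one must show is that $d_1$ itself vanishes on $\HH^1(X,\Omega^1_{X/k})$, and the Hodge--de Rham spectral sequence gives no handle on that in characteristic $2$.

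The paper's proof sidesteps this by using the \emph{conjugate} spectral sequence $E_2^{a,b}=\HH^a(X,\mathcal H^b_X)\Rightarrow\HH^{a+b}_{\dR}(X)$, where $\mathcal H^b_X=\HH^b(\Omega^\bullet_{X/k})$, together with the Cartier isomorphism, which identifies $\dim_k\HH^a(X,\mathcal H^b_X)$ with $\dim_k\HH^a(X,\Omega^b_{X/k})$. The gain is in the direction of the differentials: the only possibly nonzero differential touching position $(1,1)$ there is $d_2\colon E_2^{1,1}\to E_2^{3,0}$, and $E_2^{3,0}$ has dimension $h^3_X(\mathcal O_X)=0$ by part (1); the graded pieces $E_2^{0,2}$ and $E_2^{2,0}$ vanish by the hypotheses. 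Hence $\HH^2_{\dR}(X)\cong\HH^1(X,\mathcal H^1_X)$, which by Cartier has dimension $h^1_X(\Omega^1_{X/k})$, proving the corollary (as a dimension count --- which, note, also recovers your missing step: combined with your bound $\dim\HH^2_{\dR}=\dim\ker d_1\leq h^1_X(\Omega^1_{X/k})$ it shows a posteriori that $d_1=0$). One last point: the injection $\HH^2_{\dR}(X)\hookrightarrow\HH^1(X,\Omega^1_{X/k})$ you obtain unconditionally goes the wrong way for the application in \Cref{sec : alphanonzero}, which needs every class of $\HH^1(X,\Omega^1_{X/k})$ --- in particular the class $\beta$ of \Cref{lemma beta} --- to define a de Rham class to pair against $\alpha$; so the equality (or at least surjectivity of $\ker d_1\subseteq\HH^1(X,\Omega^1_{X/k})$, i.e.\ $d_1=0$) is indispensable, not a cosmetic strengthening.
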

\proof
Part (1) follows from \Cref{rottura di cazzo}(1).

Let us now show (2).  By (1) and the assumption $h^0_X(\Omega^i_{X/k})=0$ it is enough to show that $h^1_{X}(\Omega^1_{X/k})=\Dim_k(\HH^2_{\dR}(X))$.
Let $\mathcal H_X^i$ be the sheaf $\HH^i(\Omega_X^{\bullet})$ on $X$, so that there is the conjugate spectral sequence
$$E_2^{a,b}:=\HH^a(X,\mathcal H_X^b)\Rightarrow H_{\dR}^{a+b}(X).$$
The Cartier isomorphism \cite[Theorem 7.2]{Katz} shows that
$$\Dim_k(\HH^a(X,\mathcal H_X^b))=\Dim_k(\HH^a(X,\Omega^b_{X/k}))$$
In particular, by assumption, $E_2^{0,b}$ for $b>0$, so that there are no non trivial morphism from or to $\HH^1(X,\mathcal H_X^1)$, hence $E_{\infty}^{1,1}= \HH^1(X,\mathcal H_X^1)$. Moreover, by hypothesis, 	$E_2^{2,0}$ vanishes as well, so we have $$\HH^1(X,\mathcal H_X^1)\simeq H_{\dR}^{2}(X).$$
To conclude the proof just observe that 
$$\Dim_k(\HH^1(X,\mathcal H_X^1))=h^1_{X}(\Omega^1_{X/k})$$
again by Cartier. 
\endproof
	\section{Crystalline Artin--Mumford criterion  in characteristic two}
	
	In this section we prove a characteristic $2$ version of the Artin--Mumford theorem, stating that, under some hypothesis on the discriminant,   the total space of a conic bundle  has torsion in its cohomology. In Section 7 we will see that such a cohomological consequence is an obstruction to the decomposition of the diagonal, hence to stable rationality.
	
We will make use of the locus of crosses and double lines on the base of a conic bundles, as introduced in \Cref{def discr}.
	
	\begin{definition}\label{def AM}Let $f:X\rightarrow B$ be a flat conic bundle  with smooth generic fiber (\Cref{def conic bundle}). Let $D$ be a closed subvariety    of the discriminant (\Cref{def discr}). We say that $D$ is  Artin--Mumford   if one of the following conditions holds.
		\begin{enumerate}
			\item There exists a point  $p\in D$, such that the fiber  $X_{p}$ is a  double line.
			\item All fibers above  $D$ are crosses, $D$ is smooth and the fibration $X_{D}\rightarrow D$ is not a product.
		\end{enumerate}
		\end{definition}

\begin{theorem}\label{thm:AM2}
	Let $k$ be an algebraically closed field of characteristic $2$. Let  $f:X\rightarrow B$ be a flat conic bundle over $k$ with smooth generic fiber between smooth, proper and  connected $k$-varieties. 
	Assume the follwing.
	\begin{enumerate}
		\item The discriminant is disconnected.
		\item There are two disctinct  connected components of the discriminant of $f$ which are Artin--Mumford  (\Cref{def AM}).
\item The group $\HH^2(B,\Omega^1_{B/k})$   vanishes. 
\item The groups $\HH^i(B,\mathcal O_B)$ and $\HH^0(B,\Omega^i_{B/k}) $ vanish for $i>0$.
\item The groups $\HH^0(X,\Omega^i_{X/k}) $ vanish for $i>0$.
\end{enumerate}
Then $\HH^{2d-2}_{\crys}(X)[2]\neq 0$, where $d$ is the dimension of $X$. 
\end{theorem}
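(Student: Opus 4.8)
The goal is to produce a nonzero $2$-torsion class in $\HH^{2d-2}_{\crys}(X)$. Following the strategy outlined in the introduction, I would construct an \emph{algebraic} torsion class by exhibiting two algebraic cycles on $X$ whose classes in crystalline cohomology are distinct but become equal after multiplication by $2$ (equivalently, their difference is nonzero but killed by $2$). The natural candidates come from the two Artin--Mumford components of the discriminant: over each such component, the conic bundle degenerates (either to a double line or to a nontrivial family of crosses), and this degeneration furnishes a distinguished divisor in $X$ lying over it. The disconnectedness hypothesis (1) is what keeps these two cycle classes from being forced to coincide, while the Artin--Mumford condition (2) is what makes each of them nontrivial.

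\textbf{First steps.}
First I would set up the cohomological framework. Hypotheses (4) and (5) are exactly the inputs needed to invoke \Cref{cor : sembrapocointeressante}: they give $h^i_X(\mathcal O_X)=0$ for $i>0$, $h^0_X(\Omega^1_{X/k})=0$, and the identification $\HH^1(X,\Omega^1_{X/k})=\HH^2_{\dR}(X)$. The plan is to transport the torsion question from crystalline to de Rham / Hodge cohomology via the universal coefficients / Bockstein formalism: torsion in $\HH^{2d-2}_{\crys}(X)$ over $W(k)$ is detected by a discrepancy between the de Rham Betti number and the rank of the free part of crystalline cohomology, and by Poincar\'e duality on the smooth proper $X$ it suffices to analyze $\HH^2_{\crys}(X)[2]$ in complementary degree. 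Concretely, I would compare $\dim_k \HH^2_{\dR}(X)$ with the rank of $\HH^2_{\crys}(X)\otimes_{W(k)} K$; hypothesis (3), $\HH^2(B,\Omega^1_{B/k})=0$, together with \Cref{prop : tecnica}(4) and the Leray spectral sequence, controls the $\HH^1(X,\Omega^1_{X/k})$ piece coming from the base and isolates the ``new'' cohomology contributed by the degenerate fibers.

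\textbf{The cycle-theoretic core.}
The heart of the argument is to show that the two Artin--Mumford components contribute two independent classes modulo $2$ but a single class rationally. Over a double-line component (case (1) of \Cref{def AM}), the reduced fiber is a line $\mathbb P^1$ appearing with multiplicity two in $X_p$; the associated component of the preimage $f^{-1}(D)$ gives a divisor whose cycle class is $2$-divisible in the rational cohomology of the generic conic (a smooth conic has a unique ruling class up to the hyperplane) but \emph{not} $2$-divisible integrally, precisely because passing to the double line halves the class. Over a cross component (case (2)), the two rulings of the cross give two divisor classes whose sum is the hyperplane class but which are individually defined only because the family is nontrivial; their difference is again $2$-torsion after descent. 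I would assemble these into a class in $\CH^1$ of the total space (or of a suitable open/fiberwise locus) and push it into $\HH^{2d-2}_{\crys}(X)$ via the cycle class map, which lands in the integral lattice.

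\textbf{The main obstacle.}
The hard part will be establishing \emph{nonvanishing} of the resulting torsion class — i.e. ruling out that the two candidate classes already agree integrally, or that their difference is zero. This is exactly where disconnectedness (1) and the Artin--Mumford nontriviality (2) must be used in an essential, quantitative way, and where the characteristic-two subtleties in the behaviour of $\Omega^1$ on conic bundles (the reason \Cref{prop : tecnica} was proved with such care about the relative-versus-absolute differentials) enter. I expect the cleanest route is to compute $\HH^1(X,\Omega^1_{X/k})$ explicitly via the Leray filtration, showing that $R^1f_*\Omega^1_{X/B}$ acquires extra local sections supported exactly on $\Sigma$ and $\Delta$, and that each Artin--Mumford component contributes one dimension to the de Rham Betti number beyond what the free crystalline rank accounts for; the disconnectedness guarantees these contributions are linearly independent rather than collapsing to a single global relation. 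Matching this dimension count against the crystalline rank then forces $\HH^{2d-2}_{\crys}(X)[2]\neq 0$.
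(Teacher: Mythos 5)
Your first half is, in outline, the paper's own construction: the torsion class is $\alpha=\cl(\ell_1)-\cl(\ell_2)$, the difference of classes of \emph{half-fibers} over the two Artin--Mumford components (\Cref{def alpha}, \Cref{def half}), which is $2$-torsion because $2\cl(\ell_i)$ is the class of a full fiber and all fibers have the same class since $B$ is connected (\Cref{sec : alphatorsion}; the cross case needs the double-cover/involution argument that you only gesture at with ``descent''). But your write-up has a degree inconsistency that matters: you propose to work with divisors --- classes in $\CH^1$ such as the reduced preimage $f^{-1}(D_i)^{red}$ --- and to push them into $\HH^{2d-2}_{\crys}(X)$. Divisor classes land in $\HH^{2}_{\crys}(X)$, not $\HH^{2d-2}_{\crys}(X)$, and the divisor version of the argument actually fails: $2[f^{-1}(D_i)^{red}]=f^*[D_i]$, and $[D_1]$, $[D_2]$ are in general non-proportional classes on $B$, so the difference of the two ``halves'' is not torsion. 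What makes the mechanism work is taking curves, namely a single $\mathbb{P}^1$ inside \emph{one} fiber over each $D_i$, so that doubling produces fiber classes, which coincide.

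The genuine gap is the nonvanishing of $\alpha$, which is where all the hypotheses must act. The paper proves it by an explicit duality computation: hypotheses (4)--(5) give, via \Cref{cor : sembrapocointeressante}, the identification $\HH^2_{\dR}(X)=\HH^1(X,\Omega^1_{X/k})$, and one then constructs $\beta\in \HH^1(X,\Omega^1_{X/k})$ with $(\alpha,\beta)=1$ (\Cref{lemma cup prod}). The construction of $\beta$ (\Cref{lemma beta}) is the idea your proposal is missing: start from a degree-$2$ multisection $s:B\rightarrow X$ (an ingredient absent from your sketch), take the Leray edge image of $\cl(s(B))$ in $\HH^0(B,R^1f_*\Omega^1_{X/k})$, note that it vanishes over the smooth locus (the degree-$2$ point on the generic conic is $2$-divisible, hence zero in characteristic $2$), use disconnectedness of the discriminant to truncate this section so that it agrees with the original one near $D_1$ but vanishes off $D_1$ (in particular near $D_2$), and finally lift back along the edge map --- whose surjectivity is exactly where hypothesis (3), $\HH^2(B,\Omega^1_{B/k})=0$, enters, and not, as you suggest, to ``control the $\HH^1(X,\Omega^1_{X/k})$ piece coming from the base''. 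Your alternative route for nonvanishing --- a dimension count comparing $\dim_k\HH^2_{\dR}(X)$ with the rank of $\HH^2_{\crys}(X)$ --- is asserted rather than proved: you give no method to compute the crystalline rank (one would need, say, an $\ell$-adic computation via Katz--Messing), and no proof that each Artin--Mumford component raises the de Rham number above that rank. Moreover the complementary torsion group dual to $\HH^{2d-2}_{\crys}(X)[2]$ is $\HH^3_{\crys}(X)[2]$ (\Cref{lem : torduality}), not $\HH^2_{\crys}(X)[2]$; under (4)--(5) the latter vanishes automatically by universal coefficients, since $\HH^1_{\dR}(X)=0$. So the step that the theorem really turns on is missing from your argument.
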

\proof
Let $\alpha\in  \HH^{2d-2}_{\crys}(X)$ be the class from \Cref{def alpha}. It is $2$-torsion by \Cref{sec : alphatorsion}. On the other hand it is nonzero by \Cref{sec : alphanonzero}.
 \endproof
 \begin{remark}
 	The first three hypothesis in the above theorem are  analogous to those of the classical Artin--Mumford criterion, see \Cref{thm:AM}. Hypothesis (4) can probably be avoided with a more complicated proof but it is in practice verified by all the interesting examples (e.g. $B$ rational). Hypothesis (5) is the annoying one, as it concerns the total space and not the base. On the other hand, for applications to rationality problems, both (4) and (5) will disappear, see \Cref{thm:maincriterioninpractice}.
 	\end{remark}
\begin{definition}\label{def half}
	Let $D$ be an Artin--Mumford component in the sense of \Cref{def AM}. We define an half-fiber above $D$ as a $\mathbb{P}^1$ inside $X$ defined in the following way. If $D$ satisfies the hypothesis (1) of \Cref{def AM} we take the fiber with its reduced structure $X^{red}_{p}$. If $D$ satisfies the hypothesis (2) we take any of the two irreducible components of the cross above any closed point in $D$.
	\end{definition}

\begin{remark}
The name half fiber comes from the fact that, their class in cohomology is indeed half of the class of the fiber. This definition a priori does depend on the choice of the point $p$ and not only on $D$, so it is a little abuse to call this \textit{the} half fiber.
\end{remark}

\begin{definition}\label{def alpha}
Keep notation from  \Cref{thm:AM2}. Let $D_1$ and $D_2$ be the two connected components of the discriminant from hypothesis (2) and let $\ell_1$ and $\ell_2$ the associated half fibers as constructed in \Cref{def half}.
Let $\cl: \CH^{d-1}(X)\rightarrow \HH^{2d-2}_{\crys}(X)$ be the cycle class map to crystalline cohomology. Define the class $\alpha \in \HH^{2d-2}_{\crys}(X)$ as $$\alpha:=\cl(\ell_1)-\cl(\ell_2).$$
	\end{definition}

\begin{lemma}\label{sec : alphatorsion}
	The class $\alpha$  is 2-torsion.
	\end{lemma}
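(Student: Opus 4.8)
The plan is to show that $2\alpha = 0$ by recognizing that twice a half-fiber equals a full fiber, and that the two full fibers over $D_1$ and $D_2$ are rationally (indeed algebraically) equivalent as cycles on $X$, so their classes in crystalline cohomology agree. First I would make precise the statement behind the name ``half-fiber'': for each $i=1,2$, I claim that $2\ell_i$ is rationally equivalent to a full fiber $F_i = f^{-1}(b_i)$ over the corresponding point $b_i \in D_i$. In the double-line case (hypothesis (1) of \Cref{def AM}), the scheme-theoretic fiber $X_{b_i}$ is a double line, so as a cycle $X_{b_i} = 2\,[\ell_i]$ where $\ell_i = X^{red}_{b_i}$; in the cross case (hypothesis (2)), the fiber is a reduced cross $\ell_i + \ell_i'$ of two $\mathbb P^1$'s meeting in a point, and since the two rulings of a degenerate conic are algebraically equivalent (they are exchanged in a flat family of smooth conics degenerating to the cross), one gets $[\ell_i'] = [\ell_i]$ in cohomology, whence $[F_i] = [\ell_i] + [\ell_i'] = 2[\ell_i]$.

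Next I would argue that the classes of any two fibers agree in $\HH^{2d-2}_{\crys}(X)$. Since $B$ is smooth, proper and connected, any two closed points $b_1, b_2 \in B$ are rationally equivalent as zero-cycles up to the degree map is not automatic, so instead I would use that $[F_1] - [F_2] = f^*([b_1] - [b_2])$ where $\cl(b_1 - b_2) \in \HH^{2}_{\crys}(B)$ (with appropriate degree shift by $f^*$). The cleanest route is to observe that the fiber class $[F_i]$ is the pullback $f^*(\eta)$ of the point class $\eta \in \HH^{2\dim B}_{\crys}(B)$, and since $B$ is connected and proper the point class is independent of the chosen point; thus $[F_1] = [F_2]$ in $\HH^{2d-2}_{\crys}(X)$. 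Combining the two steps gives
\[
2\alpha = 2\,\cl(\ell_1) - 2\,\cl(\ell_2) = \cl(F_1) - \cl(F_2) = f^*\bigl(\cl(b_1) - \cl(b_2)\bigr) = 0.
\]

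The main obstacle I expect is justifying, cleanly and in the crystalline setting, the identity $2[\ell_i] = [F_i]$ in the cross case, i.e.\ that the two components of a degenerate cross have equal cohomology classes. One must verify this equality holds already in the Chow group or at least in crystalline cohomology, using that the cycle class map is compatible with rational equivalence and that the specialization of the diagonal ruling class behaves well; invoking a family degenerating a smooth conic (where the class of the hyperplane section is $2$ times the ruling) to the cross and using specialization of cycle classes is the technically delicate point, since one must control torsion phenomena in $\HH_{\crys}$ and ensure the specialization map is a ring homomorphism respecting the fiber class. The double-line case is comparatively immediate because the nonreduced structure gives the factor of $2$ directly on the level of fundamental cycles. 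I would therefore organize the argument so that both Artin--Mumford cases reduce to the single clean statement ``$2\,[\text{half-fiber}] = [\text{fiber}]$ in $\CH^{d-1}(X)$,'' and then push forward through $\cl$ and invoke the constancy of the fiber class over the connected base $B$.
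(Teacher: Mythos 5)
Your overall architecture agrees with the paper's: show that $2\,\cl(\ell_i)$ is the class of a full fiber, then use that any two fibers of $f$ over the connected base have the same class, so $2\alpha$ is a difference of fiber classes and vanishes. The double-line case and the constancy of the fiber class are fine. But there is a genuine gap in the cross case: you assert, as a general fact, that the two components $\ell_i,\ell_i'$ of a cross are algebraically equivalent in $X$, ``exchanged in a flat family of smooth conics degenerating to the cross.'' This is false. In the local model $\{ab=t\}$ the two components of the central fiber satisfy $\ell\cdot\ell=-1$ and $\ell\cdot\ell'=1$, so they cannot be homologous; globally, take the blow-up of a ruled surface at a point, a conic bundle over a curve whose unique singular fiber is a cross with components $E$ and $F-E$ (exceptional curve and strict transform of the fiber): these classes are distinct since they have different intersection numbers with $E$. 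A degenerating family of smooth conics only tells you that the limit of the fiber class is $[\ell_i]+[\ell_i']$; it does not identify the two summands.

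Whether $[\ell_i]=[\ell_i']$ holds is exactly where the Artin--Mumford hypothesis enters, and your proposal never uses it: condition (2) of \Cref{def AM} requires that $X_{D}\rightarrow D$ is \emph{not a product}. The paper's proof exploits this as follows: let $\pi:\tilde D_i\rightarrow D_i$ be the double cover trivializing the family of crosses and $\tilde X$ the normalization of the pull-back, with deck involution $g$. Non-triviality of the family forces the cover to be connected, so $g$ swaps the two lifted component classes, $g_*\cl(\tilde\ell_i)=\cl(\tilde{\emme}_i)$, and pushing forward along $\pi$ gives
\[
\cl(\ell_i)=\pi_*\cl(\tilde\ell_i)=(\pi\circ g)_*\cl(\tilde\ell_i)=\pi_*\cl(\tilde{\emme}_i)=\cl(\emme_i),
\]
whence $2\cl(\ell_i)=\cl(\ell_i)+\cl(\emme_i)$ is a fiber class. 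In short: the delicate point you flagged is not a crystalline-cohomology technicality about specialization maps, but a monodromy statement, and it is precisely the content of the non-product hypothesis; without invoking it your argument proves a statement that is false for product (or single-point) families of crosses.
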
 
	\proof
We claim  that $2\cl(\ell_i) $ is the class of a fiber. This will conclude the prove as $2 \alpha$ will then be the difference of two fibers, hence zero.

The claim is clear for a double line. In the case of  a cross of lines, let us prove that the assumption on the nontriviality of $X_D\rightarrow D$ implies that $\cl(\ell_i)=\cl(\emme_i)$ where $\emme_i$ is the other $\mathbb P^1$ in the same fiber. (This will imply that $2\cl(\ell_i)=\cl(\ell_i)+\cl(\emme_i)$ is indeed the class of a fiber.)

Let $\pi : \tilde{D_i} \rightarrow D_i$ the double cover trivializing the conic bundle on $D_i$. Let $\tilde{X}$ the normalisation of the pull-back of the conic bundle on $ \tilde{D}_i$. Let us fix one  $\mathbb P^1$ in  $\tilde{X}$ which is sent isomorphically to $\ell_i$ and let us call it $\tilde{\ell}_i.$ In the same fiber as $\tilde{\ell}_i$ the other irreducible component is denoted by $\tilde{\emme}_i$ and it is sent isomorphically to $\emme_i$. 
Let $g$ be the involution on $\tilde{X}$  above $X$.  Now, because the fibration over $D_i$ is not trivial, we have that $g_* \cl(\tilde{\ell}_i)=\cl(\tilde{\emme}_i)$. If we push forward this relation we get
\[\cl(\ell_i) = \pi_* \cl (\tilde{\ell}_i) = (\pi \circ g)_* \cl (\tilde{\ell}_i)  =  \pi_* \cl (\tilde{\emme}_i) = \cl(\emme_i).\]
\endproof
\begin{lemma}\label{sec : alphanonzero}
	The class $\alpha$ from \Cref{def alpha} is nonzero.
	\end{lemma}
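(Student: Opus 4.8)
The plan is to detect $\alpha$ after reduction to de Rham cohomology and to reformulate its nonvanishing as a pairing against a differential form. Writing $r\colon \HH^{2d-2}_{\crys}(X)\to \HH^{2d-2}_{\dR}(X)$ for the canonical reduction map, compatibility of the crystalline and de Rham cycle class maps gives $r(\alpha)=\cl_{\dR}(\ell_1)-\cl_{\dR}(\ell_2)$; since $\alpha=0$ forces $r(\alpha)=0$, it suffices to prove that $\bar\alpha:=\cl_{\dR}(\ell_1)-\cl_{\dR}(\ell_2)\in \HH^{2d-2}_{\dR}(X)$ is nonzero. By Poincaré duality $\HH^{2d-2}_{\dR}(X)\cong \HH^2_{\dR}(X)^{\vee}$, and by \Cref{cor : sembrapocointeressante}(2) — whose hypotheses are precisely (4) and (5) of \Cref{thm:AM2} — we may identify $\HH^2_{\dR}(X)=\HH^1(X,\Omega^1_{X/k})$. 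Hence the goal becomes: find $\omega\in \HH^1(X,\Omega^1_{X/k})$ with $\int_{\ell_1}\omega\ne \int_{\ell_2}\omega$, where $\int_{\ell_i}$ denotes restriction to $\ell_i\cong \Pp^1$ followed by the degree isomorphism $\HH^1(\Pp^1,\Omega^1_{\Pp^1/k})\cong k$.

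To understand this pairing I would run the Leray spectral sequence of $f$. As the fibers are curves, $R^qf_*\Omega^1_{X/k}=0$ for $q\ge 2$; \Cref{prop : tecnica} identifies $f_*\Omega^1_{X/k}=\Omega^1_{B/k}$ and $R^1f_*\Omega^1_{X/k}=R^1f_*\Omega^1_{X/B}$, and hypothesis (3) of \Cref{thm:AM2} makes the only possible differential land in $\HH^2(B,\Omega^1_{B/k})=0$. This yields
\[
0\to \HH^1(B,\Omega^1_{B/k})\to \HH^1(X,\Omega^1_{X/k})\to \HH^0(B,R^1f_*\Omega^1_{X/B})\to 0 .
\]
Since $\ell_i$ lies in a single fiber $X_{p_i}$, the map $f|_{\ell_i}$ is constant, so $f^*\Omega^1_{B/k}|_{\ell_i}\to \Omega^1_{\ell_i/k}$ is zero and the functional $\int_{\ell_i}$ kills $\HH^1(B,\Omega^1_{B/k})$; thus it factors through the image $s$ of $\omega$ in $\HH^0(B,R^1f_*\Omega^1_{X/B})$ and depends only on the germ of $s$ at $p_i$, via the identification $(R^1f_*\Omega^1_{X/B})\otimes k(p_i)\cong \HH^1(X_{p_i},\Omega^1_{X/B}|_{X_{p_i}})$ (base change holds in top fiber degree) paired with $[\ell_i]$. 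In particular the question is local on $B$ around each $p_i$.

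Here the disconnectedness enters. The connected components of $\Delta$ are disjoint closed subsets of $B$, so the open $V:=B\setminus(\Delta\setminus D_1)$ contains $D_1$ and avoids $D_2$; restricting to $X_V:=f^{-1}(V)$ annihilates $\cl_{\dR}(\ell_2)$, so $\bar\alpha=0$ would force $\cl_{\dR}(\ell_1)|_{X_V}=0$. It therefore suffices to show that the half-fiber class is nonzero around the single Artin--Mumford component $D_1$, which I would prove by a transversal computation of $R^1f_*\Omega^1_{X/B}$ across the degenerate fiber. Over the smooth locus this sheaf is invertible and its fiberwise pairing sends the full fiber $[F]=2[\ell_1]$ to its trace; the Artin--Mumford condition ensures that at $D_1$ the half-fiber $[\ell_1]$ retains a nonzero pairing realizing half of the trace, and in characteristic two this surviving half is produced either by the nonreducedness of a double line (cut out by $\Sigma=\{s_{a,b}=s_{a,c}=s_{b,c}=0\}$ from \Cref{discr car 2}) or by the nonsplitting of a cross (the involution argument of \Cref{sec : alphatorsion}), the relevant fiber dimensions being supplied by \Cref{lem : conic} and Grauert's theorem as in \Cref{prop : tecnica}. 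The main obstacle is exactly this local analysis of $R^1f_*\Omega^1_{X/B}$ through a nonreduced or nonsplit fiber: one must control the passage between relative and absolute forms finely enough (\Cref{prop : tecnica}) to certify that the half-fiber class does not die in de Rham cohomology. Should it nonetheless lie in the kernel of $r$ — i.e.\ should the torsion of $\HH^{2d-2}_{\crys}(X)$ escape de Rham detection — the fallback is to pair $\alpha$ against the $2$-torsion of $\HH^3_{\crys}(X)$ through the crystalline linking form.
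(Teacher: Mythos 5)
Your setup matches the paper's framework: reduce to de Rham cohomology, use Poincar\'e duality together with the identification $\HH^2_{\dR}(X)=\HH^1(X,\Omega^1_{X/k})$ from \Cref{cor : sembrapocointeressante}, and run the Leray spectral sequence of $f$, whose edge map is surjective because hypothesis (3) kills $\HH^2(B,f_*\Omega^1_{X/k})=\HH^2(B,\Omega^1_{B/k})$. But the proof has a genuine gap exactly where you flag ``the main obstacle'': you never construct the class $\omega$ that pairs differently with $\ell_1$ and $\ell_2$, and the assertion that ``the Artin--Mumford condition ensures that at $D_1$ the half-fiber retains a nonzero pairing realizing half of the trace'' is precisely the statement to be proved, not an input. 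The paper's proof supplies the missing construction via a degree $2$ multisection $s:B\rightarrow X$: one sets $\beta'=\cl(s(B))$, observes that $\Edge(\beta')$ vanishes over the smooth locus $U$ (a degree $2$ point class on a smooth conic is divisible by $2$, hence dies in $\HH^1(X_\eta,\Omega^1)$ in characteristic $2$), then uses disconnectedness of the discriminant to glue the zero section on $B\setminus D_1$ with $\Edge(\beta')$ on $B\setminus(\coprod_{i\neq 1}D_i)$ --- these agree on the overlap $U$ --- producing $\widetilde\beta\in\HH^0(B,R^1f_*\Omega^1_{X/k})$, and finally lifts $\widetilde\beta$ to $\beta\in\HH^1(X,\Omega^1_{X/k})$ by surjectivity of the edge map (\Cref{lemma beta}). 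The pairing is then an intersection number: $(\cl(\ell_1),\beta)=(\cl(s(B)),\ell_1)=1$ because the multisection meets each component of a reduced singular fiber (or the reduced double line) once, while $(\cl(\ell_2),\beta)=0$ because $\widetilde\beta$ vanishes off $D_1$ (\Cref{lemma cup prod}). Note in particular that the unmodified class $\beta'$ would \emph{not} work: it pairs to $1$ with both $\ell_1$ and $\ell_2$, so the gluing step is not a technicality but the point where disconnectedness enters.

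Your alternative route --- restricting to the open set $X_V$ with $V=B\setminus(\Delta\setminus D_1)$ and arguing that $\cl_{\dR}(\ell_1)|_{X_V}\neq 0$ --- does not repair this, because $X_V$ is not proper: Poincar\'e duality and the global pairing machinery you set up are no longer available there, so nonvanishing of a class on an open subvariety cannot be detected by the tools you invoke. Likewise, base change in top fiber degree identifies the stalk of $R^1f_*\Omega^1_{X/B}$ at $p_1$ with $\HH^1(X_{p_1},\Omega^1_{X/B}|_{X_{p_1}})$, but you would still need to compute the restriction map to $\HH^1(\ell_1,\Omega^1_{\ell_1/k})$ on an abstract local section, which is exactly what the geometric origin of $\beta'$ as a cycle class makes computable. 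The closing fallback (``pair $\alpha$ against the $2$-torsion of $\HH^3_{\crys}(X)$ through the crystalline linking form'') is unsubstantiated and plays no role in the paper.
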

\proof
It is enough to show that the image  of the class $\alpha$ via the natural application $\HH^{2d-2}_{\crys}(X)\rightarrow \HH^{2d-2}_{\dR}(X)$ is nonzero. 
To prove that $\alpha$ is nonzero in de Rham cohomology, it is enough to construct a class $\beta\in \HH^{2}_{\dR}(X)$ such that $(\alpha,\beta)=1$, where $(-,-):\HH^{2d-2}_{\dR}(X)\times \HH^{2}_{\dR}(X)$ is the Poincar\'e duality pairing. 
On the other hand,  by \Cref{cor : sembrapocointeressante}, there is an identification $\HH^2_{\dR}(X)=\HH^1(X,\Omega^1_{X/k})$, hence it is enough to construct a class $\beta\in \HH^1(X,\Omega^1_{X/k})$. Such a class $\beta$ can be taken to be as one of the classes appearing in \Cref{lemma beta}(3) and we indeed have $(\alpha,\beta)=1$ by \Cref{lemma cup prod}.
 \endproof

\begin{lemma}\label{lemma beta} 
	
	Let $s:B\rightarrow X$ be a degree 2 multisection of the conic bundle $f$. Define the class  $\beta'\in \HH^1(X,\Omega^1_X)$ as $\beta':=\cl(s(B))$. Consider the edge map 
	$$\Edge:\HH^1(X,\Omega^1_{X/k})\rightarrow \HH^0(B,R^1f_*\Omega^1_X)),$$
	for the Leray spectral sequence for $f:X\rightarrow B$
	$$E_2^{a,b}:=\HH^a(B,R^1f_*\Omega^1_{X/k})\rightarrow \HH^{a+b}(X,\Omega^1_{X/k}).$$
	Then the following holds.
	\begin{enumerate}
	   \item 	Let   $U\subset B$ the open subset on which $f:X\rightarrow B$ is smooth.  Then, the restriction of $\Edge(\beta')$ to $\HH^0(U,R^1f_*\Omega^1_{X_U/k})$ is zero.
	   	\item  There exists a unique class $\widetilde{\beta}\in \HH^0(X,R^1f_{\Omega_{X/k}})$ such that its restriction to  $\HH^0(X-D_1,R^1f_{\Omega_{X/k}})$ vanishes  and its restriction to $\HH^0(X-(\coprod_{i\neq 1}D_i),R^1f_{\Omega_{X/k}})$   equals to $\Edge(\beta')$.
	       	\item $\Edge:\HH^1(X,\Omega^1_{X/k})\rightarrow \HH^0(B,R^1f_*\Omega^1_{X/k}))$ is surjective.  In particular, there exists a class $\beta$ in $\HH^1(X,\Omega^1_{X/k})$ such that $\Edge(\beta)=\widetilde \beta$.
	      	\end{enumerate}
	\end{lemma}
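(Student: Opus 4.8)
The plan is to establish the three parts in order, since part~(1) is the substance and parts~(2)--(3) are formal consequences of it together with Proposition~\ref{prop : tecnica} and the hypotheses of Theorem~\ref{thm:AM2}.

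\emph{Part (1).} I would work over the smooth locus $U\subseteq B$, where $f_U\colon X_U\to U$ has smooth conic fibres, all isomorphic to $\mathbb P^1$. Using Proposition~\ref{prop : tecnica}(4) I would replace $R^1f_*\Omega^1_{X/k}$ by $R^1f_*\Omega^1_{X/B}$; restricted to $U$ this is $R^1f_{U*}\Omega^1_{X_U/U}$, which by Theorem~\ref{fact : grauert} and Lemma~\ref{lem : conic}(2) is locally free, commutes with base change, and has fibre $\HH^1(X_u,\Omega^1_{X_u/k})\cong k$ at each $u\in U$. The key input is the compatibility of the Leray edge map with restriction to a fibre: evaluating $\Edge(\beta')$ at $u$ and using the base-change isomorphism recovers $\beta'|_{X_u}=c_1\big(\mathcal O_X(s(B))|_{X_u}\big)$. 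Because $s(B)\to B$ is finite of degree two, the restriction $\mathcal O_X(s(B))|_{X_u}$ has degree two on $X_u\cong\mathbb P^1$, so its first Chern class is twice the generator of $\HH^1(\mathbb P^1,\Omega^1)$. Since $\mathrm{char}\,k=2$ this is zero, so $\Edge(\beta')$ vanishes at every point of the reduced scheme $U$; being a section of a locally free sheaf, it is therefore identically zero on $U$.

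\emph{Part (2).} By part~(1) the section $\Edge(\beta')$ is supported on the discriminant $\Delta=B\setminus U=\coprod_i D_i$, the $D_i$ being its connected components. The opens $V_1:=B\setminus D_1$ and $V_2:=B\setminus\coprod_{i\neq 1}D_i$ cover $B$, because $D_1$ is disjoint from the remaining components, and they meet exactly in $U$. Declaring $\widetilde\beta|_{V_1}:=0$ and $\widetilde\beta|_{V_2}:=\Edge(\beta')|_{V_2}$, the two sections agree on $V_1\cap V_2=U$ by part~(1); the sheaf axiom then produces a unique global section $\widetilde\beta\in\HH^0(B,R^1f_*\Omega^1_{X/k})$ with the stated restrictions.

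\emph{Part (3).} Surjectivity of the edge map $\HH^1(X,\Omega^1_{X/k})\to\HH^0(B,R^1f_*\Omega^1_{X/k})=E_2^{0,1}$ amounts to the vanishing of the differential $d_2\colon E_2^{0,1}\to E_2^{2,0}$. Here $E_2^{2,0}=\HH^2(B,f_*\Omega^1_{X/k})$, which equals $\HH^2(B,\Omega^1_{B/k})$ by Proposition~\ref{prop : tecnica}(4) and vanishes by hypothesis~(3) of Theorem~\ref{thm:AM2}. Thus $d_2=0$, the edge map is surjective, and in particular $\widetilde\beta$ lifts to a class $\beta\in\HH^1(X,\Omega^1_{X/k})$.

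The only real obstacle is part~(1), and within it the precise bookkeeping of the base-change identifications: I must ensure that the fibre of $R^1f_*\Omega^1_{X/k}$ is genuinely $\HH^1(X_u,\Omega^1_{X_u/k})$ rather than an absolute variant, which is exactly what Proposition~\ref{prop : tecnica}(4) secures, and that the edge map is computed fibrewise. Once these are in place, the characteristic-two identity $2=0$ supplies the vanishing with no further work.
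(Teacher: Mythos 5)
Your proof is correct and takes essentially the same approach as the paper: Proposition \ref{prop : tecnica}(4) together with Theorem \ref{fact : grauert} reduces part (1) to a fibrewise statement that characteristic two kills, part (2) is the gluing of $0$ and $\Edge(\beta')$ over the disconnected discriminant, and part (3) follows from the vanishing of $E_2^{2,0}=\HH^2(B,f_*\Omega^1_{X/k})\simeq \HH^2(B,\Omega^1_{B/k})$. The only (cosmetic) difference is that in part (1) the paper evaluates at the generic fibre, where the class of a point defined over a degree $2$ extension is divisible by $2$ and hence zero, whereas you evaluate at closed fibres $X_u\cong\mathbb P^1$ and use that $\mathcal O_X(s(B))|_{X_u}$ has degree $2$; both arguments rest on the same local freeness and base-change input.
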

\proof
By Proposition \ref{prop : tecnica}  there is a canonical isomorphism $R^1f_*\Omega^1_{X_U/k}\simeq R^1f_*\Omega^1_{X_U/U}$. In particular, thanks to  Theorem \ref{fact : grauert}, the coherent sheaf $R^1f_*\Omega^1_{X_U/U}$  is locally free so that it is enough to show that the restriction of $\Edge(\beta')$ to the fiber $X_{\eta}$ over the generic point $\eta\in B$ is zero. But this identifies with the class of a point of the smooth conic $X_{\eta}$ defined over a degree 2 extension. Hence it is divisible by $2$, hence it is zero in $\HH^1(X_{\eta},\Omega^1_{X/k(\eta)})$, which proves (1). 	Point (2) follows from  (1) and the assumption on the discriminant.

	  The low degree terms from the Leray spectral  sequence show that the obstruction to the surjectivity of part (3)  is $\HH^2(B,f_*\Omega^1_{X/k})$, which by Proposition \ref{prop : tecnica} is isomorphic to  $\HH^2(B,\Omega^1_{B/k})$, which vanishes by assumption. 
	\endproof

\begin{lemma}\label{lemma cup prod}
 The Poincaré pairing
	$(\alpha,\beta)$ equals $1$. 
\end{lemma}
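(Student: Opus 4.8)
The plan is to compute the Poincaré pairing $(\alpha,\beta)$ by reducing everything to a local computation on a single Artin--Mumford fiber and using the compatibility of the pairing with the Leray structure. Recall that $\alpha = \cl(\ell_1)-\cl(\ell_2)$, where $\ell_i$ is a half-fiber, and that $\beta \in \HH^1(X,\Omega^1_{X/k})$ satisfies $\Edge(\beta)=\widetilde\beta$, where $\widetilde\beta$ is the class of \Cref{lemma beta}(2). First I would record that the cup product pairing $\HH^{2d-2}_{\dR}(X)\times\HH^2_{\dR}(X)\to k$ is compatible with the cycle class map, so that $(\cl(\ell_i),\beta)$ can be computed as the degree of the restriction of $\beta$ to the curve $\ell_i$, i.e. as $\deg\big(\beta|_{\ell_i}\big)$ under the identification $\HH^2_{\dR}(X)=\HH^1(X,\Omega^1_{X/k})$ from \Cref{cor : sembrapocointeressante}(2). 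The key geometric input will be that $\beta$ restricted to a neighbourhood of $D_1$ "sees" the class $\widetilde\beta$, whereas $\beta$ restricted to a neighbourhood of $D_2$ vanishes (by the defining property of $\widetilde\beta$ in \Cref{lemma beta}(2)).

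Next I would exploit the Leray filtration. Since $\ell_i$ lies in a fiber of $f$, the pairing $(\cl(\ell_i),\beta)$ only depends on the image $\Edge(\beta)=\widetilde\beta\in\HH^0(B,R^1f_*\Omega^1_{X/k})$, not on the full class $\beta$: intuitively the fiber class $\cl(\ell_i)$ pairs with the "fiberwise part" of $\beta$. Concretely, $\widetilde\beta$ gives, for each point $p\in B$, a class in $\HH^1(X_p,\Omega^1_{X_p})$ (where defined), and pairing $\cl(\ell_i)$ against $\beta$ amounts to evaluating $\widetilde\beta$ on the $\mathbb P^1$ that is $\ell_i$ inside the fiber $X_p$. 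By the construction of $\widetilde\beta$ in \Cref{lemma beta}(2), its restriction near $D_2$ is zero, so $(\cl(\ell_2),\beta)=0$; near $D_1$ it equals $\Edge(\beta')=\Edge(\cl(s(B)))$, where $s(B)$ is the degree-$2$ multisection. Thus the whole pairing reduces to computing $(\cl(\ell_1),\beta)$, which I would express as the local intersection, inside a single singular fiber $X_{p_1}$ over a point $p_1\in D_1$, of the half-fiber $\ell_1$ with the degree-$2$ multisection $s(B)$.

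Finally I would carry out this local intersection count on the fiber $X_{p_1}$. Here the two cases of \Cref{def AM} must both be treated. If $X_{p_1}$ is a cross of two lines $\ell_1\cup\emme_1$, then the multisection $s(B)$ meets this fiber in a degree-$2$ subscheme, and since $s$ is a section of degree $2$ it hits each of the two components once (generically), so $\ell_1\cdot s(B)=1$ in the fiber; this yields $(\cl(\ell_1),\beta)=1$. If $X_{p_1}$ is a double line, then $\ell_1=X_{p_1}^{red}$ is the reduced line, and the degree-$2$ multisection meets this reduced line in a single point (the degree-$2$ condition being absorbed by the non-reduced structure), again giving intersection number $1$. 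Combining, $(\alpha,\beta)=(\cl(\ell_1),\beta)-(\cl(\ell_2),\beta)=1-0=1$.

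I expect the main obstacle to be making precise the claim that the pairing $(\cl(\ell_i),\beta)$ depends only on $\Edge(\beta)=\widetilde\beta$ and is computed as a fiberwise intersection number: this requires a careful use of the compatibility of Poincaré duality with the Leray spectral sequence, together with the fact that $\widetilde\beta$ was built precisely to localize near $D_1$ and die near $D_2$. The intersection count $\ell_1\cdot s(B)=1$ itself is the geometric heart, and the subtlety in characteristic two lies in justifying that the degree-$2$ multisection meets a nonreduced double-line fiber in a length-one scheme in the appropriate sense; this is where the characteristic-two phenomenon enters and must be handled with care rather than by a naive transversality argument.
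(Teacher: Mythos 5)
Your overall route is the same as the paper's: split $(\alpha,\beta)=(\cl(\ell_1),\beta)-(\cl(\ell_2),\beta)$, observe that $(\cl(\ell_i),\beta)$ is computed by $\beta_{\vert \ell_i}=\Edge(\beta)_{\vert \ell_i}$, get $(\cl(\ell_2),\beta)=0$ from the vanishing of $\widetilde\beta$ on $B-D_1$, and reduce $(\cl(\ell_1),\beta)$ to the intersection of the degree-$2$ multisection with the half-fiber $\ell_1$. Up to the last step, this is exactly the published argument.

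The gap is in your final count in the cross case. You claim $s(B)$ ``hits each of the two components once (generically)'', but there is no genericity to invoke: $\ell_1$ lies over a \emph{fixed} point $p_1\in D_1$ (it was chosen in \Cref{def half} to define $\alpha$), and nothing prevents the multisection from meeting that particular cross in two points of the other component $\emme_1$, or in one point with multiplicity $2$ --- in which case the naive geometric count on $\ell_1$ would be $0$ or $2$. The correct count is cohomological, not geometric: by \Cref{sec : alphatorsion} --- and this is precisely where the hypothesis that $X_{D_1}\rightarrow D_1$ is not a product enters the present lemma --- one has $\cl(\ell_1)=\cl(\emme_1)$, hence $2\,(\cl(s(B)),\cl(\ell_1))=(\cl(s(B)),[X_{p_1}])=2$ and therefore $(\cl(s(B)),\cl(\ell_1))=1$. (In the double-line case this is even easier, since $[X_{p_1}]=2[\ell_1]$ holds already at the level of cycles; your intuition that the degree $2$ is ``absorbed by the nonreduced structure'' is right, and contrary to what you expect this is the unproblematic case.) One more point of care that your write-up skips: this division by $2$ must be carried out integrally (in $\CH$, or in crystalline cohomology, whose coefficient ring $W(k)$ is torsion-free), and only afterwards reduced to de Rham cohomology. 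In $\HH^2_{\dR}(X)$ the coefficients have characteristic $2$, so the relation $2x=2$ carries no information there, whereas the integral intersection number $1$ maps to $1\neq 0$ in $k$. With this replacement for your genericity claim, your argument coincides with the paper's proof.
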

\proof
Since $(\alpha,\beta)=(\cl(\ell_1),\beta)-(\cl(\ell_2),\beta)$ is it enough to show that $(\cl(\ell_1),\beta)=1$ and $(\cl(\ell_2),\beta)=0$. To do this recall that  $(\cl(\ell_i),\beta)=\beta_{\vert \ell_i}\in \HH^1(\ell_i,\Omega^1_{\ell_i/k})=k$
and that $\beta_{\vert \ell_i}=\Edge(\beta)_{\vert \ell_i}$.  
Now, we have
$$\beta_{\vert \ell_1}=\Edge(\beta)_{\vert \ell_1}=\Edge(\beta')_{\vert \ell_1}=(\cl(s(B)),\ell_1)=1$$ 
since the multisection intersects in 1 point with multiplicity $1$ the irreducible components of $f^{-1}(p_1)^{red}$ (as it intersect with multiplicity $2$ any fiber). 
On the other hand 
$$\beta_{\vert \ell_2}=\Edge(\beta)_{\vert \ell_2}=0$$
since, by construction, $\Edge(\beta)$ vanishes on $\HH^0(B-D_1,R^1f_*\Omega_{X/k})$.
\endproof

\section{Separation of the discriminant divisor}
 In order to apply \Cref{thm:AM2} one needs, among other things, a conic bundle with disconnected discriminant. The goal of this section is the construction of a birational transformation allowing to modify a conic bundle with several irreducible components into one with several connected components. We fix  an algebraically closed field $k$ and  a smooth connected $k$-variety $B$. 
 
 \subsection*{Elementary transformations} We recall here generalities on elementary transformations, see \cite[Section 1]{Maruyama} for details.
 Let $T\subset B$ be a Cartier divisor, $\mathcal F$ be a nonzero vector bundle on $T$ and $g:\mathcal{E}_{\vert T}\rightarrow \mathcal F$ be a surjection. The kernel of $g$ is a  subvector bundle $ \mathcal Y\subset   \mathcal{E}_{\vert T}$. 
The elementary transformation of $\mathcal E$ along $\mathcal Y$ is then the rank 3 subvector bundle $El_{\mathcal Y}(\mathcal E)\subset \mathcal E$ on $B$ given by 
$$El_{\mathcal Y}(\mathcal E):=\Ker(\mathcal E\rightarrow \mathcal E_{\vert T}\rightarrow \mathcal F).$$
The inclusion $El_{\mathcal Y}(\mathcal E)\subset \mathcal E$ induces a birational map  $El_{\mathcal Y}(\mathcal E) \darrow \mathcal E$, which can be explicitly described Zariski locally and it is an isomorphism outside $T$, see \cite[Lemma 1.5]{Maruyama}. 

More precisely, assume that $B=\Spec$ is affine, $\mathbb P(\mathcal E)=\Proj(A[a,b,c])$ and that $T$ is defined by $t=0$ for some $t\in A$. If $\mathcal Y$ is defined by $t,a$, then $\mathbb P(El_\mathcal Y(\mathcal E))$ is defined by $\Proj(A[ta,b,c])$ and the birational map $ \mathbb P(El_\mathcal Y(\mathcal E))   \darrow   \mathbb P(\mathcal E)  $ is induced by sending $a$ to $ta$. If $\mathcal Y$ is defined by $t,a,b$, then $\mathbb P(El_Y(\mathcal E))$ is defined by $\Proj(A[ta,tb,c])$ and the birational map $ \mathbb P(El_Y(\mathcal E)) \darrow  \mathbb P(\mathcal E) $ is induced by sending $a$ to $ta$ and $b$ to $tb$.

 \begin{proposition}\label{separazione}
 	Let $f:X\rightarrow B$ be a generically smooth conic bundle. Let $\Delta$ be the discriminant divisor (\Cref{def discr}). Assume the following.
 	\begin{enumerate} 
 		\item One can write $\Delta = D_1 \cup D_2$ as the union of two closed subvarieties. 
 		\item $D_1$ and $D_2$ are smooth around $ D_1 \cap D_2$ and intersect transversally.
 		\item All the fibers above $D_1\cap D_2$ are crosses.
 	\end{enumerate}
 	Let $P$ be the blow-up of $B$ along $D_1\cap D_2$ and $E$ be the exceptional divisor. 
 Then there exists a conic bundle $g:Y\rightarrow P$ whose discriminant divisor is the (disjoint) union of the strict transforms of  $D_1$ and $D_2$ and such that the restrictions to $P-E$ of $g$ and of the pull-back $f_P:X_P\rightarrow P$ of $f$ coincide.
\end{proposition}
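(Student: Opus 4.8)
\emph{Strategy.} The plan is to blow up the crossing locus $Z:=D_1\cap D_2$ and then to perform a single, canonically defined elementary transformation along the exceptional divisor which ``absorbs'' the nodes of the cross fibres, thereby removing the exceptional divisor from the discriminant. First I would set up the blow-up. By hypothesis (2) the locus $Z$ is smooth of codimension two in $B$, so $\sigma:P\to B$ is the blow-up of a smooth centre, with exceptional divisor $E\to Z$ a $\mathbb{P}^1$-bundle; moreover the transversality of $D_1,D_2$ guarantees that their strict transforms $\widetilde{D}_1,\widetilde{D}_2$ are \emph{disjoint} and meet $E$ along two disjoint sections. Pulling back the conic bundle I set $\mathcal{E}_P:=\sigma^*\mathcal{E}$, $X_P\subset\mathbb{P}(\mathcal{E}_P)$ and $f_P:X_P\to P$. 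Since near $Z$ every singular fibre is a cross (hypothesis (3) together with the closedness of the locus $\Sigma$ of double lines), the discriminant has multiplicity one along each $D_i$ there, so near $E$ one has $\sigma^*\Delta=\widetilde{D}_1+\widetilde{D}_2+2E$. The whole point is to kill the summand $2E$.

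Next I would build the datum of the elementary transformation. For $e\in E$ the fibre $(X_P)_e$ coincides with the fibre $X_{\sigma(e)}$ over the point $\sigma(e)\in Z$, because the coefficients of the quadratic form depend only on the image point in $B$; hence by hypothesis (3) it is a cross. A cross has a unique singular point, its node, and as $e$ varies these nodes sweep out a section of $\mathbb{P}(\mathcal{E}_P)|_E\to E$, equivalently a sub-line-bundle $\mathcal{N}\subset\mathcal{E}_P|_E$ (the radical of the degenerate quadratic form over $E$). It is exactly hypothesis (3)---no double lines over $Z$---that makes the node a single reduced point everywhere, and hence $\mathcal{N}$ a genuine subbundle over all of $E$. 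I then take $\mathcal{Y}:=\mathcal{N}$, $\mathcal{F}:=\mathcal{E}_P|_E/\mathcal{N}$, and let $Y\subset\mathbb{P}(El_{\mathcal{N}}(\mathcal{E}_P))$ be the strict transform of $X_P$, with $g:Y\to P$ the induced conic bundle. By the properties of elementary transformations recalled above (see \cite{Maruyama}) the birational map $\mathbb{P}(El_{\mathcal{N}}(\mathcal{E}_P))\darrow\mathbb{P}(\mathcal{E}_P)$ is an isomorphism over $P-E$, so $g$ and $f_P$ agree there; in particular $Y$ has smooth generic fibre and $\Delta(g)\cap(P-E)=(\widetilde{D}_1+\widetilde{D}_2)\cap(P-E)$.

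Finally I would check, by a local computation near $Z$, that the exceptional divisor drops out of the discriminant. Trivialising $\mathcal{E}$ with the node at the coordinate point $c$ and normalising $s_{a,b}$ to $1$, one may complete coordinates to remove the mixed terms $ac,bc$ (in characteristic two this is possible because the square terms contribute no such cross-terms under $a\mapsto a+\lambda c$, $b\mapsto b+\mu c$), bringing the form to $q=ab+s_{a,a}a^2+s_{b,b}b^2+\delta\,c^2$, where $\delta$ is a local equation of $\Delta$; indeed by \Cref{discr car 2} the discriminant of this form equals $\delta$, the inseparable terms $s_{a,a}a^2,s_{b,b}b^2$ being invisible to it. Writing $\delta=xy$ with $D_1=\{x=0\}$ and $D_2=\{y=0\}$, in the chart $y=xv_1$ the transformation $El_{\mathcal{N}}$ rescales by $x$ the two coordinates $a,b$ vanishing at the node; after cancelling the resulting common factor $x^2$ one obtains $\overline{q}=ab+s_{a,a}a^2+s_{b,b}b^2+v_1c^2$, of discriminant $v_1$. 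Hence $\Delta(g)=\{v_1=0\}=\widetilde{D}_2$ in this chart and, symmetrically, $\widetilde{D}_1$ in the other, so $E\not\subseteq\Delta(g)$ and $\Delta(g)=\widetilde{D}_1\sqcup\widetilde{D}_2$, which is disconnected since the strict transforms are disjoint. The main obstacle is precisely this local step: establishing the normal form and verifying that rescaling the directions transverse to the node lowers the multiplicity of the discriminant along $E$ from two to zero. Conceptually, the transformation absorbs the node, so that the limiting conic over $E$ becomes smooth, its discriminant $v_1$ being a unit at the generic point of $E$.
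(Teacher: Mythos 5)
Your proposal is correct and follows essentially the same route as the paper's proof: blow up $D_1\cap D_2$, perform the elementary transformation of the pulled-back bundle along the rank-one subbundle of $\mathcal E_P|_E$ swept out by the nodes of the cross fibres, and verify by a chart-by-chart computation that the quadratic form then acquires a factor $t^2$ (with $t$ a local equation of $E$), so that after removing it the discriminant becomes the disjoint union of the strict transforms. The only real difference is presentational: the paper invokes Tanaka's normal form $\alpha c^2-ab$ over the strict henselization of $\mathcal O_{B,D_1\cap D_2}$, where you derive the normal form $ab+s_{a,a}a^2+s_{b,b}b^2+\delta c^2$ by hand; both suffice, since the extra diagonal terms are invisible to the characteristic-two discriminant (and, for what it is worth, your choice of the \emph{rank-one} node subbundle as the center of the elementary transformation is the one that actually makes the displayed local computation in the paper work). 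One justification in your write-up is however incorrect, though harmlessly so: hypothesis (3) does \emph{not} imply that the discriminant divisor is reduced near $D_1\cap D_2$ — the conic bundle $ab+t^2c^2=0$ has only cross fibres over $t=0$, yet its discriminant is $t^2$. The reducedness you need (so that $\sigma^*\Delta=\widetilde D_1+\widetilde D_2+2E$, equivalently $\delta=xy$ up to a unit) comes instead from hypothesis (1) read as an equality of divisors, which is exactly how the paper uses it when it writes $\alpha=\alpha' t_1t_2$ with $\alpha'$ a unit.
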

\proof
Let $q:S_2(\mathcal E)\rightarrow \mathcal L$ be the quadratic form corresponding to $f_P:X_P\rightarrow P$. By hypothesis the fibers of the restriction $f_E:X_E\rightarrow E$ are crosses. 
By \cite[Proposition 2.14(2)]{Tanaka}, the set $Y\subset \mathcal P(\mathcal E_{\vert E})$ of points living above $E$ which are singular in the fiber is the projectivization of a rank 1 subvector bundle of $\mathcal F^\vee \subset \mathcal E^\vee_{\vert E}$. Consider  the surjection $\mathcal E_{\vert E}\rightarrow \mathcal F$ and let $\mathcal Y$ be the kernel.

Let $\tilde {\mathcal E}:=El_\mathcal Y(\mathcal E)\subset \mathcal E$ be the elementary transformation of $\mathcal E$ along  $\mathcal Y$  and define $\tilde{q}:S_2(\tilde{\mathcal E})\rightarrow \mathcal L$ to be the restrition of $q$ to $S_2(\tilde{\mathcal E})$.
Let  $s\in \HH^0(P, S^2(\tilde{\mathcal E}^{\vee})\otimes \mathcal L)$ be  the section  corresponding to $\tilde{q}$. We claim that its restriction $s_{\vert E}$ to $E$  vanishes with order exactly $2$. 
Assuming the claim $s$ induce a section in  $\HH^0(B, S^2(\tilde{\mathcal E}^{\vee})\otimes \mathcal L\otimes \mathcal O_P(-2E))$ which in return induces a  quadratic form  $\tilde{q}(E):S_2(\tilde{\mathcal E} \otimes \mathcal O_P(E))\rightarrow \mathcal L$. 
By construction, the new quadratic form $ \tilde{q}(E)$ has discriminant divisor equal to the strict transform of $\Delta$. Moreover, no modification has  been done outside $P-E$, hence this will conclude the proof. 

The claim can be proved locally in a neighborhood of $E$, so we can replace $B$ with the completion of $\mathcal O_{B,D_1\cap D_2}$ and then this with its strictly henselianisation $A$. By \cite[Proposition 2.14(2)]{Tanaka}, we can then assume that $X=\Proj(A[a,b,c]/\alpha c^2-ba)$ for some $\alpha\in A$.

Let $t_1,t_2$ be the local parameter of $D_1$ and $D_2$. By assumption $\alpha=\alpha't_1t_2$, for some $\alpha'\in A^*$. Since $P\subset B\times \mathbb P^1_{\widetilde t_1,\widetilde t_2}$ is defined by $\widetilde t_1t_2=\widetilde t_2t_1$, it will be enough to show that $s$ vanishes with order exactly $2$ on $t_1=t_2=0.$
  By symmetry, it is enough to do the computation in the affine chart $\widetilde t_1=1$. 

In this chart, $X_P$ has equations $\Proj(A[a,b,c]/\alpha'\widetilde t_2t_1^2c^2-ba)$ and $\mathcal Y$ has equations $a=c=t_1=0$. Hence the elementary transformation $\tilde {\mathcal E}$  is \[\Proj(A[a,b,c]/\alpha'\widetilde t_2t_1^2c^2-t_1bt_1a),\] which vanishes with order exactly 2 along $t_1=0$.\endproof
 
 \begin{remark}\label{separation double line}
We do not know a statement analogous to Proposition \ref{separazione} under the assumption that the fibers on the intersection $D_1\cap D_2$ might be nonreduced. A very general statement as Proposition \ref{separazione} cannot be true, but it would be very useful   to find conditions where such a birational separation exists. The main motivation is
the construction of more examples to which \Cref{thm:AM2} applies. For instance,  it is very easy to construct conic bundles with reducible discriminant  and such that all the singular fibers  are nonreduced, see \Cref{rem ex double line}.
\end{remark}
	\section{An obstruction to the decomposition of the diagonal}
In this section we recall the definition of the decomposition of the diagonal, which is intimately related to rationality question, following Voisin \cite{Voisin} and \cite{CTPir}. We give a cohomological obstruction to the decomposition of the diagonal (\Cref{Mimmo} and \Cref{cor Mimmo}) and give geometric settings where this obstruction can be applied (\Cref{thm:maincriterioninpractice}, its corollary and \Cref{cor surf}).
\begin{definition}\label{def dec diag}
Let $X$ be a smooth proper geometrically connected scheme   over a field.    Consider the Chow ring $\CH(X\times X)$ of $X\times X$ with integral coefficients and the class of the diagonal  $\Delta_X \in \CH(X\times X)$ in it.
We say that $X$ has decomposition of the diagonal if there exists a relation 
\[\Delta_X=B_1+B_2\]
in $\CH(X\times X)$ where the projection to the first factor of $B_1$ is supported on a scheme of dimension zero and the projection to the second factor of $B_2$ is not the whole $X$. If such a relation does not exist we say that $X$ has no decomposition of the diagonal.
\end{definition}
The relative version of the previous definition turns out to be the following.
\begin{definition}\label{def CH0}
A proper map $f:X \rightarrow Y$ over a field $k$ is called universally {$\CH_0$-trivial} if, for all field extension $L/k$, the pushfoward map induced by $f$ on Chow groups $(f_L)_*: \CH_0(X_L)\rightarrow \CH_0(Y_L)$ is an isomorphism.

A proper variety $Z$ over $k$ is said to be universally $\CH_0$-trivial  if the structural map $f:Z \rightarrow \Spec(k)$ is so.
\end{definition}
\begin{theorem}\label{diag vois}
	\cite{Voisin,CTPir}.
	\begin{enumerate}
\item The group $\CH_0$ is a birational invariant for smooth proper varieties \cite[Example 16.1.11]{Ful}, in particular being universally $\CH_0$-trivial is a birational invariant for smooth proper varieties over a field.
\item For a smooth proper and geometrically connected variety over a field being universally $\CH_0$-trivial  is equivalent to having the decomposition of the diagonal \cite[Proposition 1.4]{CTPir}. In particular having the decomposition of the diagonal is a birational invariant for smooth proper and geometrically connected varieties. 
\item A variety which has no decomposition of the diagonal is not stably rational \cite[Lemma 1.5]{CTPir}.
\end{enumerate}
\end{theorem}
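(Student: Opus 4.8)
The three assertions are standard and I only explain how to assemble them from the cited references; the plan is to treat $(1)$ and $(3)$ as formal consequences of the functoriality of $\CH_0$ and to place the real content in the equivalence $(2)$, which I would prove by passing to the generic point and spreading out.

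For $(1)$, the key input is \cite[Example 16.1.11]{Ful}: a proper birational morphism $g\colon X\to Y$ of smooth proper $k$-varieties induces an isomorphism $g_*\colon\CH_0(X)\to\CH_0(Y)$. Indeed there is a dense open $U\subseteq Y$ over which $g$ restricts to an isomorphism and whose complement has codimension at least one, so the moving lemma lets one represent any zero-cycle on $Y$ by one supported on $U$, which then lifts uniquely; this yields both surjectivity and injectivity. An arbitrary birational map of smooth proper varieties is dominated by a common smooth proper model mapping properly and birationally to both sides, so $\CH_0$ is a birational invariant. Since a birational map over $k$ remains birational after any base change $L/k$, the same holds for $\CH_0(X_L)$; as universal $\CH_0$-triviality only asks that $(f_L)_*$ be an isomorphism for every $L$, it too is a birational invariant.

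For $(2)$, I would use the action of correspondences on $\CH_0$ together with the localization sequence. Writing a decomposition as $\Delta_X=B_1+B_2$, the cycle $B_1$ is a sum $\sum_i n_i[\{p_i\}\times X]$ of horizontal cycles (its first projection being zero-dimensional), while $B_2$ is supported on $X\times Z$ for a proper closed $Z\subsetneq X$. For any extension $L/k$ and any $z\in\CH_0(X_L)$ one has $z=(\Delta_X)_*z=(B_1)_*z+(B_2)_*z$, where $(B_1)_*z=(\deg z)\cdot\xi_L$ with $\xi=\sum_i n_i[p_i]$ a fixed $k$-cycle and $(B_2)_*z$ is supported on $Z_L$; feeding this through Noetherian induction on the support shows that $(f_L)_*$ is an isomorphism for all $L$, i.e.\ universal $\CH_0$-triviality. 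Conversely, applying universal $\CH_0$-triviality over the function field $L=k(X)$ to the canonical diagonal point $\delta\in X_{k(X)}$ yields $[\delta]=[\xi_{k(X)}]$ in $\CH_0(X_{k(X)})$ for a degree-one zero-cycle $\xi$ defined over $k$. Since $k(X)$ is the filtered colimit of $\mathcal O_X(U)$ over dense opens $U\subseteq X$, this rational equivalence already holds over some $U$, so $\Delta_X-\xi\times X$ restricts to zero in $\CH(X\times U)$; the localization sequence $\CH(X\times(X\setminus U))\to\CH(X\times X)\to\CH(X\times U)\to0$ then exhibits $\Delta_X-\xi\times X$ as a cycle $B_2$ supported on $X\times(X\setminus U)$, giving the decomposition with $B_1=\xi\times X$. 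The final clause of $(2)$ is immediate: combining this equivalence with $(1)$ shows that the decomposition of the diagonal is a birational invariant.

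Finally $(3)$ follows formally. Projective space is universally $\CH_0$-trivial, since $\CH_0(\Pp^n_L)\cong\Z$ via degree for every $L$; if $X$ is stably rational then $X\times\Pp^m$ is birational to some $\Pp^n$, hence universally $\CH_0$-trivial by $(1)$, and the projective-bundle identity $\CH_0(X\times\Pp^m_L)\cong\CH_0(X_L)$ transfers this to $X$. By $(2)$, $X$ then admits a decomposition of the diagonal; contrapositively, no decomposition of the diagonal forces $X$ not to be stably rational. The main obstacle is the converse direction of $(2)$: the passage from a rational equivalence living only over the generic point to an honest cycle-level identity on $X\times X$, which is exactly where the spreading-out argument and the localization sequence do the essential work.
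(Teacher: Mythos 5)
Your treatment of (2) and the formal deduction of (3) are sound and follow exactly the arguments of \cite[Proposition 1.4, Lemma 1.5]{CTPir} that the paper cites: the correspondence action plus the moving lemma for one direction, spreading out from the generic point plus the localization sequence for the converse, and the projective bundle formula $\CH_0(X\times\mathbb P^m_L)\simeq\CH_0(X_L)$ to pass from stable rationality to the diagonal. (The paper itself gives no proof of this statement; it is a citation theorem, so any complete argument is more than the paper records.)

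However, your proof of (1) has a genuine gap, and it matters precisely in this paper's setting. You reduce an arbitrary birational map between smooth proper varieties to the case of a proper birational \emph{morphism} by invoking ``a common smooth proper model mapping properly and birationally to both sides.'' Such a model is obtained by resolving the closure of the graph, and resolution of singularities is not known in positive characteristic in dimension $>3$. The paper works over fields of characteristic two, and statement (3) is applied to stable rationality, i.e.\ to birational equivalences $X\times\mathbb P^m\sim\mathbb P^n$ of arbitrarily large dimension; there your reduction cannot be carried out. The argument of \cite[Example 16.1.11]{Ful}, which both you and the paper cite, avoids this entirely: one works directly with the (possibly singular) closure $\Gamma\subset X\times Y$ of the graph of the birational map as a correspondence, using only smoothness of $X$ and $Y$; the compositions $\Gamma^t\circ\Gamma$ and $\Gamma\circ\Gamma^t$ differ from the diagonals by cycles supported over proper closed subsets, and such correspondences kill $\CH_0$ by the moving lemma. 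The same correspondence formalism also repairs the second weak point in your (1): injectivity of $g_*$ does not follow from cycles on $U$ ``lifting uniquely,'' because a rational equivalence on $Y$ witnessing $g_*z\sim 0$ may pass through $Y\setminus U$ and need not lift; one instead computes $z=(\Gamma^t\circ\Gamma)_*z=(\Gamma^t)_*(g_*z)=0$ after moving $z$ off the exceptional locus.
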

\begin{definition}\label{degeneration}
Let $X$ and $Y$ be proper  varieties defined over a (possibly different) field. We say that $X$ degenerates to $Y$ if there exist a discrete valuation ring $R$ and  proper faithfully flat scheme  over $  \Spec(R)$ whose generic fiber is $X$ and whose special fiber is $Y$.
\end{definition}
\begin{theorem}\label{CTP}\cite[Theorem 1.14]{CTPir}.
Let $X$ and $Y$ be proper geometrically integral  varieties with $X$ smooth. Suppose tha $X$ degenetes to $Y$ and that  $Y$ admits a desingularization $\pi : \tilde{Y}\rightarrow Y $ such that  $\pi$ is universally $\CH_0$-trivial. Suppose that  $Y$ has no decomposition of the diagonal (in the sense of \Cref{def dec diag}). Then $X$ has no decomposition of the diagonal.
\end{theorem}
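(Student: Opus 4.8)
The plan is to argue by contraposition: assuming that $X$ has a decomposition of the diagonal, I will produce one on the smooth model $\tilde Y$; by \Cref{diag vois}(2) together with the universal $\CH_0$-triviality of $\pi$, this will contradict the hypothesis that $Y$ has no decomposition of the diagonal. Throughout I freely translate between ``decomposition of the diagonal'' and ``universal $\CH_0$-triviality'' using \Cref{diag vois}(2). To begin, I unwind \Cref{degeneration}: there is a discrete valuation ring $R$ with fraction field $K$ and residue field $k$, and a proper faithfully flat integral $R$-scheme $\mathcal{X}$ with generic fibre $\mathcal{X}_K=X$ and special fibre $\mathcal{X}_k=Y$.

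The main tool is the specialization homomorphism for Chow groups attached to the flat family $\mathcal{X}\times_R\mathcal{X}\to\Spec R$, whose generic fibre is $X\times_K X$ and whose special fibre is $Y\times_k Y$:
$$\mathrm{sp}:\CH_*(X\times_K X)\longrightarrow\CH_*(Y\times_k Y).$$
This is a dimension-preserving group homomorphism, compatible with proper pushforward, sending the class of a subvariety to the class of the special fibre of its flat closure in $\mathcal{X}\times_R\mathcal{X}$. First I would note that it transports the diagonal: the relative diagonal $\Delta_{\mathcal{X}}\subset\mathcal{X}\times_R\mathcal{X}$ is flat over $R$ with generic fibre $\Delta_X$ and special fibre $\Delta_Y$, so $\mathrm{sp}(\Delta_X)=\Delta_Y$. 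Applying $\mathrm{sp}$ to the given relation $\Delta_X=B_1+B_2$ of \Cref{def dec diag}, where $B_1$ has support in $Z\times X$ for a zero-dimensional $Z\subset X$ and $B_2$ in $X\times D$ for a proper closed $D\subsetneq X$, the flat closures of $Z$ and $D$ in $\mathcal{X}$ specialize to a zero-dimensional $Z'\subset Y$ and a proper closed $\overline{D}\subsetneq Y$ (flatness over $R$ keeps dimensions constant along the special fibre). Hence $\mathrm{sp}(B_1)$ is supported on $Z'\times Y$ and $\mathrm{sp}(B_2)$ on $Y\times\overline{D}$, and $\Delta_Y=\mathrm{sp}(B_1)+\mathrm{sp}(B_2)$ is a decomposition of the diagonal of the (possibly singular) $Y$.

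It then remains to pass from the singular $Y$ to its smooth model $\tilde Y$, and this is where the hypothesis on $\pi$ is indispensable. I would restrict the relation $\Delta_Y=\mathrm{sp}(B_1)+\mathrm{sp}(B_2)$ to the generic fibre of the second projection $Y\times_k Y\to Y$: since the generic point $\eta_Y$ avoids $\overline{D}$, the term $\mathrm{sp}(B_2)$ dies, and one obtains an equality $[\eta_Y]=z$ in $\CH_0(Y_{k(Y)})$, where $z$ is the base change of a degree-one zero-cycle on $Y$. Now $\pi$ is birational and universally $\CH_0$-trivial, so over $k(Y)=k(\tilde Y)$ the pushforward $\CH_0(\tilde Y_{k(Y)})\xrightarrow{\sim}\CH_0(Y_{k(Y)})$ is an isomorphism (\Cref{def CH0}); lifting $\eta_Y$ to the generic point $\eta_{\tilde Y}$ of $\tilde Y$ and choosing a constant cycle $z'$ with $\pi_*z'=z$, I obtain $[\eta_{\tilde Y}]=z'$ in $\CH_0(\tilde Y_{k(\tilde Y)})$, which is exactly a decomposition of the diagonal for $\tilde Y$.

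The step I expect to be the main obstacle is the well-behavedness of $\mathrm{sp}$ together with this final descent through the singular special fibre. Concretely, one must justify that specialization of zero-cycles is well defined and functorial even though the total space $\mathcal{X}$ need not be regular; this is handled by working at the generic point $\xi$ of $Y$, whose local ring on a regular model is a discrete valuation ring with fraction field $K(X)$ and residue field $k(Y)$, so that the construction reduces to Fulton's. One must also check that the ``constant'' summand $\mathrm{sp}(B_1)$ genuinely remains a base-changed zero-cycle and that the ``divisorial'' summand $\mathrm{sp}(B_2)$ stays supported in a proper closed subset. The universal $\CH_0$-triviality of $\pi$ is precisely the input preventing the resolution from creating or destroying zero-cycle classes, so that the decomposition survives the passage from $Y$ to $\tilde Y$; without this hypothesis the specialized relation would control only the singular $Y$ and not its smooth model.
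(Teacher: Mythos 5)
The paper offers no proof of this statement: it is quoted from Colliot-Th\'el\`ene--Pirutka \cite[Th\'eor\`eme 1.14]{CTPir}, so the only meaningful comparison is with the cited proof. Your argument is correct in substance, and it takes a genuinely different (Voisin-style) route. Colliot-Th\'el\`ene and Pirutka argue through universal $\CH_0$-triviality: for every field extension $M$ of the residue field $k$ they produce a discrete valuation ring dominating $R$ with residue field $M$, use Fulton's specialization map on \emph{zero-cycles} together with its surjectivity (from properness and flatness) to show that $\deg\colon \CH_0(Y_M)\to\Z$ is an isomorphism, and only then pass to $\tilde{Y}$ via $\pi$. You instead specialize the $d$-dimensional relation $\Delta_X=B_1+B_2$ once, on $X\times_K X$, to $Y\times_k Y$, restrict to the generic fibre of the second projection to get $[\delta_Y]=z_{k(Y)}$ in $\CH_0(Y_{k(Y)})$, and transfer this through $\pi_*^{-1}$. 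Both routes rest on the same two pillars (Fulton specialization and the universal $\CH_0$-triviality of $\pi$); yours invokes specialization only once and stays at the level of the diagonal, while theirs never has to interpret ``decomposition of the diagonal'' on the singular $Y$ and gives universal $\CH_0$-triviality of $\tilde Y$ over all extensions directly.

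Three points to tighten. First, your worry about regularity of the total space is a red herring, and the proposed fix (``working at the generic point of $Y$ on a regular model'') would itself be a gap, since regular models are not available in this generality; fortunately nothing of the sort is needed: Fulton's map $\mathrm{sp}=i^*\circ(\text{flat closure})$ is defined for an arbitrary scheme over a DVR as soon as the special fibre is a principal effective Cartier divisor, which here follows from flatness of $\mathcal{X}\times_R\mathcal{X}$ over $R$ (the uniformizer is a nonzerodivisor). Second, in the last step the constant cycle $z'$ should be produced as a \emph{class}, not by choosing preimage points (which need not exist with the right residue fields unless $k$ is algebraically closed): use that $\pi_*\colon \CH_0(\tilde{Y})\to \CH_0(Y)$ is an isomorphism by \Cref{def CH0} applied over $k$ itself, pick $z'$ with $\pi_*z'=z$ in $\CH_0(Y)$, and conclude by compatibility of proper pushforward with flat base change to $k(Y)$. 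Third, your closing logic is slightly backwards: a decomposition on $\tilde{Y}$ contradicts the hypothesis on $Y$ simply by pushing forward along $\pi\times\pi$ (birationality suffices, no $\CH_0$-triviality needed), whereas the universal $\CH_0$-triviality of $\pi$ is needed exactly for the transfer you actually perform, from $Y$ up to $\tilde{Y}$; that transfer is also what makes the statement usable in the paper's applications, where irrationality is certified on the smooth model $\tilde{Y}$ and not on $Y$.
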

 
Motivated by Theorem \ref{thm:AM2}, we now explain how   torsion in crystalline cohomology  obstructs the decomposition of diagonal. This is the combination of   \cite{AmbrosiValloni,AuelBrauer,Rullingbirationalinvariance,Totaro}. 
 \begin{proposition}\label{Mimmo}
			Let $k$ be an algebraically closed field of characteristic $p>0$ and $X$ be a smooth proper and connected $k$-variety. 
		Assume that $\HH^3_{\crys}(X)[p]\neq 0$. Then $X$ has no decomposition of the diagonal.
	\end{proposition}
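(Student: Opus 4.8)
The plan is to prove the contrapositive: assuming $X$ admits a decomposition of the diagonal, I show $\HH^3_{\crys}(X)[p]=0$. This is the Bloch--Srinivas / Colliot-Thélène--Pirutka correspondence argument (as in \cite{CTPir,Totaro}) transported to integral crystalline cohomology. Write $d=\Dim X$ and take a decomposition $\Delta_X=B_1+B_2$ as in \Cref{def dec diag}, so that $B_1$ is supported on $S\times X$ with $S\subseteq X$ of dimension $0$, and $B_2$ is supported on $X\times D$ with $D\subsetneq X$ closed of dimension $e\le d-1$. The first input I would invoke is the action of algebraic correspondences on integral crystalline cohomology $\HH^\bullet_{\crys}(-,W(k))$, functorial for proper pushforward and pullback, compatible with composition and with the cycle class map $\cl$, and crucially \emph{sensitive to torsion}; this is exactly the formalism set up in \cite{AmbrosiValloni}, resting on the integral cycle classes of \cite{AuelBrauer,Rullingbirationalinvariance}. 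Applying $\cl$ to $\Delta_X=B_1+B_2$ and using $(\Delta_X)_\ast=\mathrm{id}$ yields the identity of endomorphisms
\[
\mathrm{id}=(B_1)_\ast+(B_2)_\ast\qquad\text{on}\quad \HH^3_{\crys}(X)[p].
\]

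Next I would treat the two terms separately. For $(B_1)_\ast$: since $B_1$ is supported on $S\times X$ with $S$ zero-dimensional, it is a combination of cycles of the form $\{s\}\times Z$, and for $a\in\HH^i_{\crys}(X)$ with $i>0$ the restriction of $p_1^\ast a$ to $\{s\}\times X$ is pulled back from the point $s$, hence vanishes. Thus $(B_1)_\ast=0$ on $\HH^3_{\crys}(X)$, in particular on its $p$-torsion. For $(B_2)_\ast$: because $B_2$ is supported on $X\times D$, the second projection of its support lies in $D$, so its action factors through the cohomology of $D$. Concretely, choosing a smooth projective alteration $\rho\colon\widetilde D\to D$ with $\Dim\widetilde D=e$, the projection formula presents $(B_2)_\ast$ (up to the factor $\deg\rho$) as a composite
\[
\HH^3_{\crys}(X)\xrightarrow{\ \phi\ }\HH^{\,3-2(d-e)}_{\crys}(\widetilde D)\xrightarrow{\ \rho_\ast\ }\HH^3_{\crys}(X),
\]
where $\phi$ is induced by a correspondence in $\CH(X\times\widetilde D)$ and $\rho_\ast$ is the Gysin pushforward. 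As $d-e\ge 1$, the intermediate group sits in cohomological degree $\le 1$.

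The punchline is a torsion-freeness input: for a smooth proper variety the crystalline cohomology groups $\HH^0_{\crys}$ and $\HH^1_{\crys}$ over $W(k)$ are torsion-free (Illusie). Hence $\HH^{\,3-2(d-e)}_{\crys}(\widetilde D)$, being $\HH^0_{\crys}$ or $\HH^1_{\crys}$ of $\widetilde D$ (or zero when $e\le d-2$), has no $p$-torsion. Therefore the restriction of $\phi$ to the $p$-torsion subgroup $\HH^3_{\crys}(X)[p]$ lands in a torsion-free group and must vanish, so the composite $\rho_\ast\circ\phi$ kills $\HH^3_{\crys}(X)[p]$. Combined with $(B_1)_\ast=0$, the displayed identity gives $(\deg\rho)\cdot\mathrm{id}=0$ on $\HH^3_{\crys}(X)[p]$; since this group is $p$-torsion, it suffices that $\deg\rho$ be prime to $p$ (or that $\rho$ be a genuine resolution) to conclude $\HH^3_{\crys}(X)[p]=0$. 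This contradicts the hypothesis, proving that $X$ has no decomposition of the diagonal.

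The main obstacle I anticipate is precisely the first and last technical points, which are what the references supply. Over $\mathbb{Q}$ the whole computation is classical, but \emph{detecting $p$-torsion} forces one to work with cycle class maps to $W(k)$-coefficient crystalline cohomology that are genuinely integral and satisfy proper base change, the projection formula, and compatibility with Gysin maps at the level of torsion; this is the content imported from \cite{AmbrosiValloni,AuelBrauer,Rullingbirationalinvariance}. The second delicate point is the factorization of $(B_2)_\ast$ through the smooth model $\widetilde D$, which requires a resolution or a prime-to-$p$ alteration of $D$; because the obstruction lives in $p$-torsion, a degree prime to $p$ is harmless, and the torsion-freeness of $\HH^{\le 1}_{\crys}$ does the rest. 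Granting this integral formalism, the correspondence computation above is a formal consequence and yields the statement.
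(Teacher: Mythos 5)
Your argument breaks down at the step where you factor $(B_2)_\ast$ through a smooth proper model $\widetilde D$ of the support $D$. For that factorization to say anything about $p$-torsion you need a proper, generically finite $\rho\colon \widetilde D\to D$ with $\widetilde D$ smooth proper and $\deg\rho$ prime to $p$ (or $\rho$ birational). Neither exists in general in characteristic $p$: resolution of singularities is open (and $D$ here can have arbitrary dimension), and Gabber's refinement of de Jong's theorem produces alterations of degree prime to $\ell$ only for primes $\ell\neq p$; for $\ell=p$ the best general result (Temkin) gives alterations whose degree is a \emph{power} of $p$. Since the class you want to kill is exactly $p$-torsion, the identity your argument produces, $(\deg\rho)\cdot\mathrm{id}=0$ on $\HH^3_{\crys}(X)[p]$, is vacuous as soon as $p\mid \deg\rho$, and nothing rules that out. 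The same problem infects the preliminary step of lifting the cycle $B_2$ along $X\times\widetilde D\to X\times D$: components of $B_2$ sitting over the locus where $\rho$ is not finite force a d\'evissage that again multiplies by uncontrolled degrees. This is not a technicality to be granted in a final sentence; it is precisely the known obstruction to running the Bloch--Srinivas/Colliot-Th\'el\`ene--Pirutka correspondence argument on $p$-torsion in characteristic $p$, and it is why the paper does not argue this way.

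The paper's proof avoids ever touching $D$ inside crystalline cohomology. It first shows (\Cref{cime di rape}, quoting \cite{Totaro}, \cite{AuelBrauer} and \cite{Rullingbirationalinvariance}) that a decomposition of the diagonal forces $\HH^0(X,\Omega^i_X)=0$, $Br(X)=0$ and $\HH^i(X,\mathcal O_X)=0$; these are statements about coherent invariants and the Brauer group, whose proofs (Grothendieck duality \emph{à la} Chatzistamatiou--R\"ulling, unramified-cohomology arguments) work for arbitrary singular supports without any resolution or alteration. It then invokes \cite[Theorem 1.1.5]{AmbrosiValloni}, which bounds the $p$-torsion of $\HH^3_{\crys}(X)$ in terms of exactly those invariants. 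So your contrapositive-plus-correspondences instinct matches the shape of the paper's argument, but the crystalline-with-supports computation must be replaced by this detour through Hodge cohomology and the Brauer group; as written, your proof has a genuine gap that cannot be repaired with current technology.
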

	\proof
	It is enough to combine  \cite[Theorem 1.1.5]{AmbrosiValloni} with the following Theorem \ref{cime di rape}. 
		\endproof
	\begin{theorem}\label{cime di rape}\cite{AuelBrauer,Rullingbirationalinvariance,Totaro}. 
	Assume that $X$ has decomposition of the diagonal. Then the following holds.
	\begin{enumerate}
		\item $\HH^0(X,\Omega^i_X)=0$ for $i\geq 1$.
		\item $Br(X)=0$.
		\item $\HH^i(X,\mathcal O_X)=0$ for $i\geq 1$.
	\end{enumerate}
	\end{theorem}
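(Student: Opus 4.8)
The plan is to prove all three vanishings at once, through the action of integral Chow correspondences. Set $d=\dim X$ and let $F$ denote any one of the three functors $X\mapsto \HH^0(X,\Omega^i_{X/k})$ (for $i\geq 1$), $X\mapsto \mathrm{Br}(X)$, and $X\mapsto \HH^i(X,\mathcal O_X)$ (for $i\geq 1$). Each of these carries an action of a correspondence $\gamma\in\CH^d(X\times X)$ by $\gamma_*=\mathrm{pr}_{2,*}(\mathrm{pr}_1^*(-)\cdot\gamma)$, under which the diagonal $\Delta_X$ acts as the identity: for the two coherent theories this is the usual action on Hodge cohomology via the de Rham cycle class, and for the Brauer group it is the functoriality constructed in \cite{AuelBrauer}. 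By \Cref{def dec diag} we may write $\Delta_X=B_1+B_2$ with $B_1$ supported on $W\times X$ for some zero-dimensional $W$ and $B_2$ supported on $X\times Z$ for some proper closed $Z\subsetneq X$ (here $Z\neq X$ since $X$ is irreducible). Applying $(-)_*$ and using linearity gives $\mathrm{id}_{F(X)}=(B_1)_*+(B_2)_*$, so it suffices to show that both $(B_1)_*$ and $(B_2)_*$ vanish.

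For $(B_1)_*$: writing $\iota_W\colon W\hookrightarrow X$ and $B_1=(\iota_W\times\mathrm{id})_*B_1'$ with $B_1'\in\CH(W\times X)$, the projection formula identifies $(B_1)_*$ with the composite $F(X)\xrightarrow{\iota_W^*}F(W)\xrightarrow{(B_1')_*}F(X)$. Since $W$ is zero-dimensional over the algebraically closed field $k$, all three invariants vanish on the reduced scheme $W$: one has $\Omega^i_{W/k}=0$ for $i\geq 1$, $\HH^i(W,\mathcal O_W)=0$ for $i\geq 1$ as $W$ is affine of dimension $0$, and $\mathrm{Br}(W)=0$ as $W$ is a finite disjoint union of copies of $\Spec k$. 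Hence $(B_1)_*=0$.

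For $(B_2)_*$: for statement (1) there is a direct argument. As $B_2$ is supported on $X\times Z$ and $Z$ is disjoint from the dense open $U:=X\setminus Z$, the correspondence $B_2$ is empty over $U$ on the target side, so $(B_2)_*\alpha$ restricts to $0$ on $U$; a global section of the locally free sheaf $\Omega^i_{X/k}$ that vanishes on a dense open is $0$, whence $(B_2)_*=0$ on $\HH^0(X,\Omega^i_{X/k})$. For statements (2) and (3) one factors $(B_2)_*$ through a smooth model: choosing a resolution or de Jong alteration $\rho\colon\tilde Z\to Z$ and a lift $B_2=(\mathrm{id}\times(\iota_Z\circ\rho))_*B_2'$ with $B_2'\in\CH(X\times\tilde Z)$, the projection formula factors $(B_2)_*$ as $(\iota_Z\rho)_*\circ(B_2')_*$ with $(B_2')_*\colon F(X)\to F(\tilde Z)$. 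Here $B_2'$ has codimension $e:=\dim\tilde Z<d$ in $X\times\tilde Z$, so on Hodge cohomology $(B_2')_*$ lowers both the cohomological and the form degree by $d-e\geq 1$; it therefore sends $\HH^i(X,\mathcal O_X)=\HH^i(X,\Omega^0_{X/k})$ into $\HH^{i-(d-e)}(\tilde Z,\Omega^{-(d-e)}_{\tilde Z/k})=0$, the form degree being negative. (The same count re-proves (1), the target landing in negative cohomological degree.) The Brauer case is the parallel statement that a correspondence supported in positive codimension on the target annihilates $\mathrm{Br}$, which is exactly the content of \cite{AuelBrauer}, while for $\HH^i(X,\mathcal O_X)$ the rigorous correspondence formalism is supplied by \cite{Rullingbirationalinvariance,Totaro}. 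Combining, $(B_2)_*=0$, and with the previous step $\mathrm{id}_{F(X)}=0$, i.e. $F(X)=0$.

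The main obstacle is not the bookkeeping above but the rigorous construction, in characteristic $p$, of the correspondence action on each theory together with the Gysin maps and projection formulas it uses. For the coherent theories the degree count is robust and, crucially, the passage from $Z$ to a possibly positive-degree alteration $\tilde Z$ introduces no denominators despite the integral coefficients, because the relevant target groups on $\tilde Z$ vanish for degree reasons regardless of $\deg\rho$; the only genuine care is in producing the integral lift $B_2'$ of the support-$(X\times Z)$ cycle $B_2$. The truly delicate theory is the Brauer group, where no naive graded-degree argument is available and one must appeal to the dedicated functoriality and codimension-one vanishing of \cite{AuelBrauer}; this, together with \cite{Rullingbirationalinvariance,Totaro} for the coherent invariants, is where the substance of the cited results lies.
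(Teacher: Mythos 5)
Your overall strategy is the same as the paper's: the paper proves (3) by exactly this correspondence argument --- $\alpha=\Delta_X^*\alpha=B_1^*\alpha+B_2^*\alpha$, with the $B_1$-term killed because its action factors through a zero-dimensional scheme and the $B_2$-term killed by citing \cite[Proposition 3.2.2]{Rullingbirationalinvariance} --- while (1) and (2) are quoted directly from \cite[Lemma 2.2]{Totaro} and \cite[Theorem 1.1]{AuelBrauer}. Your unified presentation, the factorization of $(B_1)_*$ through $F(W)$ with $W$ reduced and zero-dimensional, and your support-theoretic proof of (1) (the class $(B_2)_*\alpha$ restricts to zero on the dense open $X\setminus Z$, and a global section of the locally free sheaf $\Omega^i_{X/k}$ vanishing on a dense open vanishes) are all sound and essentially reproduce the content of those references.

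The one genuine problem is the self-contained substitute you offer for the $B_2$-step of (3), the alteration-plus-degree-count argument. In characteristic $p$ resolution of singularities is not available, so $\rho\colon\tilde Z\to Z$ must be a de Jong alteration, generically finite of some degree $\delta>1$ in general. A cycle supported on $X\times Z$ then need not lift integrally to $X\times\tilde Z$: for an irreducible component $V\subset X\times Z$ of $B_2$, any subvariety of $(\mathrm{id}\times\rho)^{-1}(V)$ dominating $V$ does so with some generic degree $\delta_V\geq 1$ that you cannot force to equal $1$, so pushforward only produces $\delta_V\cdot V$ rather than $V$. The factorization then yields $\delta_V\cdot (B_2)_*=0$ on the relevant component rather than $(B_2)_*=0$, which is vacuous on the $k$-vector space $\HH^i(X,\mathcal O_X)$ whenever $p\mid\delta_V$; and alterations of degree prime to $p$ are not known to exist. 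Your remark that the degree is harmless ``because the target groups on $\tilde Z$ vanish'' does not repair this: the vanishing of $F(\tilde Z)$ kills $(B_2')_*$, but the identity $(B_2)_*=(\iota_Z\rho)_*\circ(B_2')_*$ is precisely what requires the integral lift $B_2'$. (In characteristic zero one could patch the argument by taking $\rho$ birational and inducting on the dimension of the locus where $\rho$ fails to be an isomorphism; in characteristic $p$ this patch is unavailable.) This is why the paper --- and, in the end, you as well --- must invoke \cite[Proposition 3.2.2]{Rullingbirationalinvariance}, whose formalism defines the action of correspondences with singular supports via duality and never passes to a smooth model of $Z$. So your write-up is correct in the same sense the paper's proof is, namely with the substance resting on the cited results, but the replacement sketch you propose for that step would not survive without them.
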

		\proof
	Observe that (1) is \cite[Lemma 2.2]{Totaro} and (2) is \cite[Theorem 1.1]{AuelBrauer}. For (3), let $\alpha$ be any class in $\HH^i(X,\mathcal O_X).$ Consider the action of correspondence on Hodge cohomology and let us use the relation from  \Cref{def dec diag}.
	We get 
	\[\alpha=\Delta^*_X \alpha=B^*_1 \alpha+B^*_2 \alpha.\]
  Since the action of $B_1$ factors through a scheme of dimension zero, we get $B^*_1 \alpha=0$. Since   $B_2$ does not dominate the second factor we have $B^*_2 \alpha=0$, by \cite[Proposition 3.2.2]{Rullingbirationalinvariance}. This implies $\alpha=0$ and proves (3).
	\endproof
	
	\begin{lemma}\label{lem : torduality}
		Let $X$ be a smooth proper and connected variety of dimension $d$ over  an algebraically closed field $k$ of characteristic $p>0$. Then the $k$-vector spaces
	 $\HH^{2d-2}_{\crys}(X)[p]$ and $\HH^{3}_{\crys}(X)[p]$ have the same dimension.
	\end{lemma}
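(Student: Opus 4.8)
The plan is to deduce the statement from crystalline Poincar\'e duality combined with the universal coefficient formalism over the discrete valuation ring $W:=W(k)$. Recall that $W$ is a complete DVR with uniformizer $p$ and residue field $k$, and that for $X$ smooth and proper the complex $R\Gamma_{\crys}(X)$ is perfect over $W$, so every $\HH^i_{\crys}(X)$ is a finitely generated $W$-module. By the structure theorem such a module is a sum of a free part $W^{b_i}$ and a torsion part $\bigoplus_j W/p^{n_{ij}}$, and the quantity we must control, $\Dim_k\HH^i_{\crys}(X)[p]$, is precisely the number of cyclic summands of the torsion part. Hence it is enough to produce an abstract isomorphism of $W$-modules between $\HH^{2d-2}_{\crys}(X)_{\tors}$ and $\HH^{3}_{\crys}(X)_{\tors}$.

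To this end I would invoke crystalline Poincar\'e duality (Berthelot--Ogus): for $X$ smooth and proper of dimension $d$ there is a trace isomorphism $\HH^{2d}_{\crys}(X)\simeq W$ and a duality isomorphism
$$R\Hom_W(R\Gamma_{\crys}(X),W)\simeq R\Gamma_{\crys}(X)[2d]$$
in the derived category of $W$-modules (possibly up to a Frobenius twist, which is invisible to $k$-dimensions). Feeding this into the universal coefficient spectral sequence for $R\Hom_W(-,W)$, which degenerates into short exact sequences because $W$ has global dimension one, one obtains for every integer $n$ a short exact sequence
$$0\to \mathrm{Ext}^1_W(\HH^{1-n}_{\crys}(X),W)\to \HH^{n+2d}_{\crys}(X)\to \Hom_W(\HH^{-n}_{\crys}(X),W)\to 0.$$
Since $\Hom_W(-,W)$ is torsion-free whereas $\mathrm{Ext}^1_W(W/p^m,W)\simeq W/p^m$, passing to torsion submodules gives a (non-canonical) isomorphism $\HH^{j}_{\crys}(X)_{\tors}\simeq \HH^{2d+1-j}_{\crys}(X)_{\tors}$ of $W$-modules, valid for every $j$.

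Specializing to $j=2d-2$, so that $2d+1-j=3$, yields $\HH^{2d-2}_{\crys}(X)_{\tors}\simeq \HH^{3}_{\crys}(X)_{\tors}$. Because the $k$-dimension of $M[p]$ for a finitely generated $W$-module $M$ equals the number of cyclic summands of $M_{\tors}$, it depends only on the isomorphism class of $M_{\tors}$; the two dimensions therefore coincide, which is the claim. The step I expect to be the main obstacle is pinning down the correct \emph{shift by one} in the torsion duality: it is the $\mathrm{Ext}^1$-term (not the $\Hom$-term) of universal coefficients that carries the torsion, so torsion in degree $j$ is matched with torsion in degree $2d+1-j$ rather than $2d-j$, and one must also make sure the integral crystalline duality and the finite generation of $R\Gamma_{\crys}(X)$ over $W$ are correctly cited.
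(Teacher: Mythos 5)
Your proposal is correct and follows essentially the same route as the paper, whose entire proof is to invoke Poincar\'e duality in Berthelot's integral form together with the universal coefficients theorem over $W(k)$. Your expansion of that one-line argument — the derived self-duality of $R\Gamma_{\crys}(X)$, the universal-coefficient short exact sequences over a DVR, and in particular the shift matching $\HH^{j}_{\crys}(X)_{\tors}$ with $\HH^{2d+1-j}_{\crys}(X)_{\tors}$ via the $\mathrm{Ext}^1$-term (so that $j=3$ pairs with $2d-2$) — is exactly the intended content, correctly worked out.
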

\begin{proof}
This follows from Poincar\'e duality in the form stated in \cite[Th\'eor\'eme 2.1.3, Page 555]{Berthelot} and the universal coefficients theorem.
	 	\end{proof}
	\begin{proposition}\label{cor Mimmo}
		Let $X$ be a smooth proper and connected variety of dimension $d$ over  an algebraically closed field $k$ of characteristic $p>0$. 	Assume that $\HH^{2d-2}_{\crys}(X)[p]\neq 0$. Then $X$ has no decomposition of the diagonal.
	\end{proposition}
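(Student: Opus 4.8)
The plan is to deduce this statement directly by combining the torsion duality of \Cref{lem : torduality} with the case already handled in \Cref{Mimmo}. The point is that \Cref{Mimmo} gives the desired conclusion whenever the torsion subgroup $\HH^3_{\crys}(X)[p]$ is nonzero, while our hypothesis concerns instead $\HH^{2d-2}_{\crys}(X)[p]$. So the whole argument amounts to transferring the nonvanishing from one cohomological degree to the other.

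Concretely, I would proceed as follows. First I would invoke \Cref{lem : torduality}, which asserts that the two $k$-vector spaces $\HH^{2d-2}_{\crys}(X)[p]$ and $\HH^{3}_{\crys}(X)[p]$ have the same dimension; this is the key input, and it rests on Poincaré duality for crystalline cohomology together with the universal coefficients theorem, both already recorded. Then, since by assumption $\HH^{2d-2}_{\crys}(X)[p]\neq 0$, the equality of dimensions forces $\HH^{3}_{\crys}(X)[p]\neq 0$ as well.

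Finally I would apply \Cref{Mimmo} to the class in $\HH^3_{\crys}(X)[p]$: since $\HH^3_{\crys}(X)[p]\neq 0$, that proposition yields immediately that $X$ has no decomposition of the diagonal, which is exactly the desired conclusion.

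There is essentially no genuine obstacle here, as the statement is a formal consequence of the two preceding results; the only substantive content has already been isolated into \Cref{lem : torduality}, whose proof (Poincaré duality plus universal coefficients) is where the real work of relating degrees $3$ and $2d-2$ takes place. Thus the argument for \Cref{cor Mimmo} itself is a one-line combination, and the reason for stating it separately is simply to phrase the obstruction in the top-degree form $\HH^{2d-2}_{\crys}(X)[p]\neq 0$ that matches the output of \Cref{thm:AM2}, where the class $\alpha$ naturally lives in $\HH^{2d-2}_{\crys}(X)$.
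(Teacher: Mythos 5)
Your proof is correct and is exactly the paper's own argument: the paper proves this proposition by combining \Cref{Mimmo} with \Cref{lem : torduality}, just as you do, using the equality of dimensions of the torsion spaces to transfer nonvanishing from degree $2d-2$ to degree $3$ and then invoking \Cref{Mimmo}. Your closing remark about why the statement is phrased in degree $2d-2$ (to match the class $\alpha$ from \Cref{thm:AM2}) also reflects the paper's intent.
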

	\proof
	 This is the combination of \Cref{Mimmo} and  \Cref{lem : torduality}.
		\endproof
	Now \Cref{thm:AM2} can be stated in a more geometric way.
	\begin{theorem}\label{thm:maincriterioninpractice}
	Let $k$ be an algebraically closed field of characteristic $2$ and consider a flat conic bundle $f:V\rightarrow B$  over $k$ with smooth generic fiber between smooth proper $k$-varieties. 
Assume the follwing.
\begin{enumerate}
	\item The discriminant is disconnected.
	\item There are two disctinct  connected components of the discriminant of $f$ which are Artin--Mumford components (\Cref{def AM}).
	\item The group $\HH^2(B,\Omega^1_{B/k})$   vanishes. 
	\end{enumerate}
	Then $V$ has no  decomposition of the diagonal.
	\end{theorem}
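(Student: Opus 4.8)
The plan is to argue by contradiction, exploiting the fact that the present statement is exactly Theorem~\ref{thm:AM2} with its two auxiliary vanishing hypotheses (4) and (5) removed. The key observation is that a decomposition of the diagonal of $V$ would itself force precisely those two hypotheses, so that Theorem~\ref{thm:AM2} could then be invoked to produce a torsion class which, by Proposition~\ref{cor Mimmo}, contradicts the existence of such a decomposition.

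First I would suppose that $V$ admits a decomposition of the diagonal. Applying Theorem~\ref{cime di rape} with $X=V$ gives at once $\HH^0(V,\Omega^i_{V/k})=0$ and $\HH^i(V,\mathcal O_V)=0$ for all $i\geq 1$ (using $\Omega^i_V=\Omega^i_{V/k}$). The first family of vanishings is precisely hypothesis (5) of Theorem~\ref{thm:AM2}. Next I would transfer these vanishings from the total space down to the base: since $V$ is smooth and $f$ is a flat conic bundle with smooth generic fiber, Proposition~\ref{prop : tecnica}, and hence Corollary~\ref{rottura di cazzo}, apply to $f:V\rightarrow B$. Part (1) of Corollary~\ref{rottura di cazzo} gives $h^i_B(\mathcal O_B)=h^i_V(\mathcal O_V)$ for $i>0$, whence $\HH^i(B,\mathcal O_B)=0$ for $i>0$; part (2) provides an injection $\HH^0(B,\Omega^i_{B/k})\hookrightarrow \HH^0(V,\Omega^i_{V/k})$ for $i>0$, and since the target vanishes we deduce $\HH^0(B,\Omega^i_{B/k})=0$ for $i>0$. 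Together these are exactly hypothesis (4) of Theorem~\ref{thm:AM2}.

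Finally, hypotheses (1)--(3) being assumed and (4)--(5) now in hand, Theorem~\ref{thm:AM2} applies and yields $\HH^{2d-2}_{\crys}(V)[2]\neq 0$ with $d=\dim V$. Proposition~\ref{cor Mimmo} then forces $V$ to have no decomposition of the diagonal, contradicting our assumption. Hence $V$ has no decomposition of the diagonal, as claimed.

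The step I expect to require the most care is not any single computation but the logical structure of the bootstrap: the extra hypotheses are not assumed outright but are deduced from the very decomposition one wishes to exclude. One must check there is no circularity, which is transparent here, since the chain (decomposition $\Rightarrow$ vanishings (4),(5) $\Rightarrow$ crystalline torsion $\Rightarrow$ no decomposition) terminates in a genuine contradiction rather than looping back on an unproven input. The only routine verifications are that $B$ is smooth, proper and connected so that Proposition~\ref{prop : tecnica} and Corollary~\ref{rottura di cazzo} apply verbatim to $f:V\rightarrow B$, and the compatibility of absolute and relative differentials used in invoking Theorem~\ref{cime di rape}(1).
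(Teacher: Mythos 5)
Your proposal is correct and is essentially the paper's own argument: the paper removes hypotheses (4) and (5) of \Cref{thm:AM2} using exactly the same ingredients (\Cref{cime di rape}(1),(3) together with \Cref{rottura di cazzo}(1),(2), then \Cref{thm:AM2} and \Cref{cor Mimmo}), merely organized as a case split --- if (4) or (5) fails, the decomposition of the diagonal is already excluded outright --- rather than as your proof by contradiction. The two formulations are logically equivalent, so there is nothing to add.
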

	\begin{proof}
		This is the combination of \Cref{thm:AM2} with \Cref{cor Mimmo} up to the fact that hypothesis (4) and (5) of \Cref{thm:AM2}  are no longer necessary.   Indeed, if (5) is not satisfied then $V$ has no decomposition of the diagonal (automatically, without the need of the other hypothesis) because of \Cref{cime di rape}(1). Similarly, if (4) is not satisfied, then $V$ has no decomposition of the diagonal by  \Cref{cime di rape}(1) combined with \Cref{rottura di cazzo}(2) and \Cref{cime di rape}(3) combined with \Cref{rottura di cazzo}(1).
	\end{proof}
	\begin{corollary}
		Let $V$ be as in Theorem \ref{thm:maincriterioninpractice}. Let 	   $X$ be a smooth proper geometrically connected variety (defined over a field in characteristic zero or two). Suppose that  $X$   degenerates to a variety $Y$ which is birational to $V$  and which has a disingularization $\pi : \tilde{Y}\rightarrow Y $ such that  $\pi$ is universally $\CH_0$-trivial (\Cref{def CH0}). Then $X$   has no  decomposition of the diagonal. 
		\end{corollary}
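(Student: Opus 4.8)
The plan is to read this off as a formal combination of the degeneration theorem of Colliot-Th\'el\`ene and Pirutka (\Cref{CTP}) with the geometric criterion \Cref{thm:maincriterioninpractice} and the birational invariance recorded in \Cref{diag vois}. Concretely, \Cref{CTP} reduces the statement for $X$ to the single assertion that $Y$ has no decomposition of the diagonal, since the other two hypotheses of \Cref{CTP}, namely that $X$ degenerates to $Y$ and that $Y$ admits the universally $\CH_0$-trivial desingularization $\pi:\tilde Y\to Y$, are part of the assumptions. So the entire task is to establish that $Y$ has no decomposition of the diagonal.

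First I would apply \Cref{thm:maincriterioninpractice} to $V$: by hypothesis $V$ satisfies its three assumptions, so $V$ has no decomposition of the diagonal. Here $V$ is smooth, proper and geometrically connected, being the total space of a flat conic bundle with geometrically connected fibres over a smooth proper connected base, so the conclusion of the criterion is literally available for it.

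Next I would transport this property to the resolution and then down to $Y$. Since $\tilde Y$ is a desingularization of the geometrically integral variety $Y$, it is smooth, proper, geometrically connected and birational to $Y$, hence birational to $V$. By \Cref{diag vois}(2) the absence of a decomposition of the diagonal is a birational invariant for smooth proper geometrically connected varieties, so $\tilde Y$ has no decomposition of the diagonal; equivalently, by the same statement, $\tilde Y$ is not universally $\CH_0$-trivial. I would then use that $\pi$ is universally $\CH_0$-trivial to descend this to $Y$: if $Y$ were universally $\CH_0$-trivial, then for every extension $L/k$ the isomorphism $(\pi_L)_*\colon \CH_0(\tilde Y_L)\to \CH_0(Y_L)$ composed with $\CH_0(Y_L)\to \CH_0(\Spec L)$ would exhibit $\tilde Y$ as universally $\CH_0$-trivial, a contradiction. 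Hence $Y$ is not universally $\CH_0$-trivial, i.e. $Y$ has no decomposition of the diagonal, and \Cref{CTP} concludes that $X$ has no decomposition of the diagonal.

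The only genuinely delicate points, rather than serious obstacles, are the two changes of framework. The first is passing from the smooth model $\tilde Y$ to the possibly singular $Y$: this is exactly where the universal $\CH_0$-triviality of $\pi$ is indispensable, the notion of decomposition of the diagonal for the singular $Y$ being understood through the resulting equivalence with $\tilde Y$. The second is matching base fields, since $V$ lives over an algebraically closed field of characteristic two whereas $X$ is only assumed over a field of characteristic zero or two; one therefore has to set up the degeneration and the birational identification $Y\sim V$ over a common base before invoking \Cref{diag vois} and \Cref{CTP}.
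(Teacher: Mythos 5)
Your proof is correct and follows essentially the same route as the paper: apply \Cref{thm:maincriterioninpractice} to $V$, transfer the absence of a decomposition of the diagonal to $\tilde Y$ by birational invariance (\Cref{diag vois}), and conclude with \Cref{CTP}. The one place you go beyond the paper is the explicit descent from $\tilde Y$ to $Y$ using the universal $\CH_0$-triviality of $\pi$ (composing pushforwards to the point); the paper's proof feeds ``$\tilde Y$ has no decomposition of the diagonal'' directly into \Cref{CTP}, whose stated hypothesis concerns $Y$, so your added step makes explicit a bridge the paper leaves implicit.
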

	 \begin{proof}
	 	Having decomposition of the diagonal is a birational invariant (\Cref{diag vois}) so $\tilde{Y}$ has no decomposition of the diagonal by \Cref{thm:maincriterioninpractice}. We can conclude using \Cref{CTP}.
	 \end{proof}
	
	For conic bundles over surfaces the irrationality criterion can be made easier to apply (see \Cref{basta} for comments).

		\begin{theorem}\label{cor surf}
		Let $k$ be an algebraically closed field of charecteristic two, $S$ be a smooth proper surface   and $f:V\rightarrow S$ be a   flat conic bundle with smooth generic fiber.  Assume the following.
		\begin{enumerate}
			\item The group $\HH^2(S,\Omega^1_{S/k})=0$.
			\item The discriminant divisor is reducible and the singular locus of each irreducible component of the discriminant is contained in the set of points whose fibers are double lines.
			\item The irreducible components of the discriminant meet transversally and the fibers above the intersections   are crosses.
			\item There are at least two irreducible components which are Artin--Mumford in the sense of \Cref{def AM}.
			\item $V$ is smooth around the fibers of double lines.
		\end{enumerate}
		Then $V$ has desingularization $\pi : \tilde{V}\rightarrow V $ such that $\pi$ is universally $\CH_0$-trivial and $\tilde{V}$ has no decomposition of the diagonal.  In particular, any smooth variety (in characteristic zero or two) that degenerates to $V$    has no  decomposition of the diagonal hence it is not stably rational. 
	\end{theorem}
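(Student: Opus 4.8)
The plan is to reduce Theorem \ref{cor surf} to the already-established Theorem \ref{thm:maincriterioninpractice} together with the degeneration machinery of \Cref{CTP}, so the real work lies in two independent tasks: first producing a conic bundle with genuinely \emph{disconnected} discriminant out of the reducible-but-connected discriminant hypothesis, and second constructing the universally $\CH_0$-trivial desingularization $\pi:\tilde V\to V$. For the first task I would invoke \Cref{separazione}: hypotheses (2) and (3) of the present theorem are precisely tailored so that, after possibly shrinking to a neighborhood of a single pairwise intersection point of two Artin--Mumford components, the intersection locus consists of points whose fibers are crosses and the two components meet transversally and are smooth there (their singular points being double-line points by (2), hence disjoint from the intersection by (3)). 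Blowing up $S$ along the intersection $D_1\cap D_2$ then yields a new conic bundle $g:Y\to P$ over the blow-up $P$ whose discriminant is the disjoint union of the strict transforms of $D_1$ and $D_2$. I must check that the blow-up preserves hypothesis (1), i.e. that $\HH^2(P,\Omega^1_{P/k})=0$; this should follow from $S$ being a surface and the blow-up being along a finite set of points, via the standard behavior of Hodge numbers under blow-ups of points on a surface (the correction terms live in $\HH^1$ of the exceptional $\Pp^1$'s, not in $\HH^2$).

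Second, I would verify that the Artin--Mumford property of the components survives this birational modification. Since \Cref{separazione} leaves $g$ and the pullback $f_P$ identical away from the exceptional divisor $E$, and the Artin--Mumford condition in \Cref{def AM} is tested at points of the component away from the crossing locus (either the existence of a double-line fiber, which by (5) and (2) occurs at smooth points away from the crossings, or nontriviality of the cross-fibration, which is a generic condition unaffected by the local modification over $E$), the two chosen components remain Artin--Mumford after passing to their strict transforms. At this point \Cref{thm:maincriterioninpractice} applies to $g:Y\to P$ and delivers that $Y$ has no decomposition of the diagonal.

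The remaining and most delicate task is to build the desingularization $\pi:\tilde V\to V$ and prove it is universally $\CH_0$-trivial. The singularities of $V$ arise exactly over the singular points of the discriminant, which by (2) are double-line points, and by (5) we are told $V$ is smooth around the double-line fibers; the genuine singularities of the total space therefore sit over the transverse crossing points of the discriminant, where by (3) the fibers are crosses. Locally over such a point the conic bundle looks like a product of two node-type degenerations, so $V$ has ordinary double-point-type (or more generally toric) singularities whose resolution I would carry out by a single blow-up (or small resolution). The crucial point is that each fiber of $\pi$ over a singular point is a rational variety (a configuration of $\Pp^1$'s and rational surfaces) with trivial $\CH_0$, so that $\pi$ is universally $\CH_0$-trivial in the sense of \Cref{def CH0}; this is the standard verification used in all conic-bundle applications of the degeneration method. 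Since $\tilde V$ is birational to $Y$ (both being modifications of $V$ over the locus where everything is untouched), \Cref{diag vois}(2) transfers the no-decomposition-of-the-diagonal property from $Y$ to $\tilde V$. Finally, combining the universal $\CH_0$-triviality of $\pi$ with \Cref{CTP} yields the stable irrationality of any smooth variety degenerating to $V$. The hard part will be the explicit local analysis of the singularities of $V$ over the crossing points and the verification that the chosen resolution has rational, $\CH_0$-trivial fibers; everything else is an orchestration of results already proved in the paper.
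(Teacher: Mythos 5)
Your proposal follows essentially the same route as the paper's proof: separate the discriminant with \Cref{separazione} after blowing up the crossing points (checking, as you do, that $\HH^2$ of the blown-up base still vanishes and that the Artin--Mumford components survive), apply \Cref{thm:maincriterioninpractice} to the resulting conic bundle, resolve the singularities of $V$ lying over the crossings by blowing them up, and conclude via birational invariance (\Cref{diag vois}) and \Cref{CTP}. The one step you defer --- the local analysis at the crossing points --- is settled in the paper by Tanaka's normal form for conic bundles near cross fibers, which gives the local equation $t_1t_2a^2+bc=0$, so each singular point $Q_i$ of $V$ is an ordinary quadratic singularity (local equation $t_1t_2+bc=0$) whose blow-up has a smooth quadric, hence rational, exceptional divisor, making $\pi$ universally $\CH_0$-trivial by \cite[Proposition 1.8]{CTPir}, exactly as you predicted.
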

	\begin{proof}
		Consider the points $\{P_i\}$ in $S$ of intersection of two components of the discriminant. Above each point $P_i$ there is a unique point $Q_i$ which is singular in the fiber (as the fiber is a cross). We claim that   the singular points of $V$ are exactly the   $\{Q_i\}$ and that their are ordinary quadratic singularities. Under this claim we can resolve the singularities by simply blowing-up those points. Moreover the exceptional divisors are regular quadrics, hence rational, which implies that the map of desingularization is $\CH_0$-trivial by \cite[Proposition 1.8]{CTPir}. On the other hand the resolution $\tilde{V}$ has no decomposition of the diagonal. Indeed by \Cref{separazione} it is birational to a conic bundle satisying the hypothesis of \Cref{thm:maincriterioninpractice}. We can conclude using \Cref{CTP}.
		
		We have to show the claim. This is essentially   \cite[Theorem 2.14]{Tanaka}. Indeed let us write $t_1,t_2$ for the formal coordinates around a fixed point $P_i$ where the vanishing of a $t_i$ corresponds to an irreducible component of the discriminant. Let $a,b,c$ be the coordinates of the conic bundle. Then the proof in loc. cit. shows that the local equation of  $V$ in the tube above the formal neighborhood around $P_i$ is $\alpha a^2+bc=0$ where $\alpha$ is the discriminant. Hence  in this case we have the equation  $t_1t_2a^2+bc=0$. This implies that the local affine equation around $Q_i$ is $t_1t_2+bc=0$, which gives the claim.
		\end{proof}
		\begin{remark}\label{basta}
				One of the good features of  Theorem \ref{cor surf} with respect to the previous ones, is that  the discriminant does not need to be disconnected. Also the smoothness assumption on the total space is easier to check as it is reduced to the hypothesis (5) (and partially (2)).
			\end{remark}
\section{Concrete examples}
In this section we  construct explicit examples where \Cref{cor surf} applies and deduce from it irrationality results, both in characteristic two and zero (\Cref{teo concreto}). All these examples are in characteristic two and have $\mathbb P^2$ as base. We will use $x,y,z$ for the coordinates on the base $\mathbb P^2$. The points on which the fibers are double lines will always be $[0:1:0]$ and $[0:0:1]$. The total space will be regular above these two points. The discriminant will have two components, each one passing in exactly one of these two points and being singular only there.

Once the examples are found, verifying that they do satisfy these conditions  (and hence the hypothesis of \Cref{cor surf}) is an elementary computation based on the Jacobian criterion and on the description of the loci $\Delta$ and $\Sigma$ (of \Cref{def discr}) using formulas from \Cref{discr car 2}. We do not write these computations.

The way the examples were found was by first guessing the possible discriminants and then finding the coefficients that such a conic bundle should have. With no surprise then, the former have easier equations than the latter. To guess discriminants, it was useful to have the restrictions from \cite[Theorem 4.3]{Auelconic} describing the local behaviour of $\Delta$ around $\Sigma$.

We also write a  last example, taken from \cite{AuelBrauer}, where one of the two components of the discriminant has only crosses but the family over it is not a product.  That example was found using Magma, as the authors explain. The coefficients there are easier than the discriminant.
\begin{example}\label{ex:1}
Let $B=\mathbb P^2$ with coordinates $x,y,z$. Consider the conic bundle defined by
$$
\begin{matrix}\label{unesempioacaso}
&\vline & \mathcal O & \mathcal O(1) &  \mathcal O(3)\\
\hline
\mathcal O  &\vline & 1 &  & \\
\mathcal O (1) &\vline & x & zy&\\
\mathcal O (3) &\vline & 0 & x(y^3+z^3)+y^2z^2&y^6+z^6+x^4yz+xz^5+xy^5\end{matrix}
$$
then the discriminant is
$$\Delta=(x^3z + y^4)(x^3y + z^4).$$
\end{example}
\begin{remark}\label{spieghiamolo}
Let us explain how to find such an example. Write
$$
\begin{matrix}	&\vline & \mathcal O & \mathcal O(1) &  \mathcal O(3)\\
	\hline
	\mathcal O  &\vline & 1 &  & \\
	\mathcal O (1) &\vline & x & \alpha&\\
	\mathcal O (3) &\vline & 0 &\beta & \gamma \end{matrix}
$$
	for a general matrix of this form. The discriminant is $x^2\gamma +\beta ^2$. First notice that $\alpha$ does not appear in the discriminant: its choice will only matter for the smoothness of the total space. Then one is looking for the following two conditions.
	\begin{enumerate}
		\item $\beta=x=0$ consists exactly of the two points $[0:1:0]$ and $[0:0:1]$.
		\item $ (x^3z + y^4)(x^3y + z^4)+ \beta^2$ is divisible by $x^2$: the quotient will then be $\gamma$.
		\end{enumerate}
		Relation (2) simply becomes: $\beta$ is $y^2z^2$ modulo $x$.
			\end{remark}
\begin{example}
Let $B=\mathbb P^2$ with coordinates $x,y,z$ and let  $g$ be an homogenous polynomial of degree 2.
$$
\begin{matrix}
&\vline & \mathcal O & \mathcal O(1) &  \mathcal O(3)\\
\hline
\mathcal O  &\vline & 1 &  & \\
\mathcal O (1) &\vline & x &  g&\\
\mathcal O (3) &\vline & 0 & x(y^3+z^3)+y^2z^2&y^6+z^6+x^4yz+xz^5+xy^5.\end{matrix}
$$
Since the term $g$ does not appear in the equation of the discriminant the factorization 
$$\Delta=(x^3z + y^4)(x^3y + z^4)$$
still holds. 
Hence, every time this conic bundle is smooth around $[0:1:0]$ and $[0:0:1]$ one can apply  \Cref{thm:maincriterioninpractice}. Observe that, the open set of polynomials $g$ satisfying this smoothness condition is nonempty by the previous example.
\end{example}

\begin{example}\label{ex:2}
$B=\mathbb P^2$ with coordinates $x,y,z$. Consider the conic bundle with value in $\mathcal O(1)$  defined by
$$
\begin{matrix}
&\vline & \mathcal O(1) & \mathcal O(1) &  \mathcal O(3)\\
\hline
\mathcal O(1)  &\vline & x+y &  & \\
\mathcal O (1) &\vline & x & x&\\
\mathcal O (3) &\vline &0 & x(y^2+z^2)+y((x+z)z+(z+y)y) & f
\end{matrix}
$$
where $$f=x^2yz^2 + x^2z^3 + xy^4 + xy^3z + xz^4 + y^5 + y^2z^3 + yz^4 +
        z^5.$$ Then the discriminant is
$$\Delta=(x^2z + xy^2 + y^3)(x^2yz + x^2z^2 + y^4 + y^2z^2 + z^4).$$

This example can be found following the strategy from \Cref{spieghiamolo}.
\end{example}
\begin{theorem}\label{teo concreto}
	Let $k$ be a field of characteristic zero or two. A very general conic bundle in $\mathbb P (\mathcal O \oplus \mathcal O(1) \oplus  \mathcal O(3))$ over $\mathbb P_k^2$ with values in $\mathcal O$ has no decomposition of the diagonal, hence it is not stably rational. 

Similarly, a very general conic bundle in $\mathbb P (\mathcal O(1) \oplus \mathcal O(1) \oplus  \mathcal O(3))$ over $\mathbb P_k^2$ with values in $\mathcal O(1)$ has no decomposition of the diagonal, hence  it is not stably rational.
\end{theorem}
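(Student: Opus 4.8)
The plan is to reduce both statements to \Cref{cor surf} by realizing a very general member of each family as a degeneration of the corresponding explicit example: \Cref{ex:1} (equivalently \Cref{thm:explicit families}) for the family with values in $\mathcal O$, and \Cref{ex:2} for the family with values in $\mathcal O(1)$. The first task is the elementary one already announced in the discussion preceding \Cref{ex:1}: using the Jacobian criterion together with the formulas for $\Delta$ and $\Sigma$ from \Cref{discr car 2}, one checks that each of these two conic bundles satisfies hypotheses (1)--(5) of \Cref{cor surf}. Granting this, \Cref{cor surf} provides for each example a desingularization which is universally $\CH_0$-trivial and has no decomposition of the diagonal, and moreover guarantees that any smooth variety, in characteristic zero or two, degenerating to the example has no decomposition of the diagonal.

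I organize the two families into a single universal object defined over $\Z$. Let $T$ be the affine space over $\Spec\Z$ parametrizing the six coefficient sections $s_{i,j}$ of \Cref{discr car 2}, and let $\mathcal X\to T$ be the universal conic bundle; the scheme $T$ is integral, and the chosen example is a point $0\in T(\mathbb F_2)$ with fiber $\mathcal X_0$. As usual I interpret very-generality over an uncountable algebraically closed field and may assume $k$ is such. Then ``very general'' means ``outside a countable union of proper closed subvarieties of $T_k$'', and the locus of those $t$ for which $\mathcal X_t$ admits a decomposition of the diagonal is itself such a countable union of locally closed subsets, being cut out by cycle relations parametrized by countably many relative Hilbert schemes.

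Suppose, for contradiction, that a very general member carries a decomposition of the diagonal. Then one of these strata is dense, so there is a uniform decomposition over a dense open $U\subseteq T_k$; spreading it out, I obtain an integral scheme $S$ of finite type over $\Z$, a dominant map $S\to T$ whose image is a model of $U$, and a relative decomposition over $S$. Since $T$ is integral with dense generic fiber, the point $0$ lies in the closure of the image of $S$; choosing a general curve $C\subseteq S$ through a point lying over $0$ --- dominating $\Spec\Z$ in the characteristic-zero case --- the local ring at that point is a discrete valuation ring, equicharacteristic two when $k$ has characteristic two and of mixed characteristic $(0,2)$ when $k$ has characteristic zero, whose special fiber is $\mathcal X_0$ and whose generic fiber $\mathcal X_{\eta_C}$ is smooth, geometrically integral, and still carries the restricted decomposition. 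Applying \Cref{CTP} to this discrete valuation ring, with $Y=\mathcal X_0$ and its $\CH_0$-trivial desingularization from \Cref{cor surf}, forces $\mathcal X_{\eta_C}$ to have no decomposition of the diagonal, a contradiction. Hence a very general member has no decomposition of the diagonal, and by \Cref{diag vois} it is not stably rational; running the argument for each of the two families gives both assertions.

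The genuinely technical steps are the spreading-out and the production of the connecting discrete valuation ring --- in particular, in characteristic zero, arranging the arithmetic (mixed characteristic) specialization of a general characteristic-zero member onto the fixed characteristic-two example $\mathcal X_0$ while keeping the generic fiber smooth, geometrically integral, and inside the decomposition locus. This is exactly the Voisin and Colliot-Th\'el\`ene--Pirutka specialization machinery packaged in \Cref{CTP}, and the only input specific to this paper is the verification that the explicit examples satisfy \Cref{cor surf}. I therefore expect the main obstacle to be the bookkeeping in the degeneration argument; the substantive content has already been isolated in \Cref{cor surf} and in the construction of \Cref{ex:1} and \Cref{ex:2}.
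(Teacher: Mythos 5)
Your overall strategy is exactly the paper's: its entire proof of \Cref{teo concreto} is the single sentence ``it is enough to degenerate such a very general conic bundle to \Cref{ex:1} and \Cref{ex:2} and apply \Cref{cor surf}'', so checking the hypotheses of \Cref{cor surf} for the two examples and producing the degeneration is precisely what is intended.

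Two points about your implementation of the degeneration step, though. First, immediately after the statement the paper \emph{defines} ``very general'' to mean that the coefficients of the defining sections are algebraically independent over the prime field. Under that definition the member in question is, up to extension of scalars, the generic fibre of the universal family over the coefficient space $T$ (over $\mathbb F_2$, resp.\ over $\mathbb Z$), and the degeneration demanded by \Cref{cor surf} is immediate: take any discrete valuation ring with fraction field the function field of $T$ centred at the point $0$ corresponding to the example (equicharacteristic two, or mixed characteristic $(0,2)$ in the characteristic-zero case), and use the standard invariance of having a decomposition of the diagonal under extension of algebraically closed fields. This makes your Hilbert-scheme/uncountability apparatus unnecessary; moreover your reinterpretation of ``very general'' over an uncountable field proves a slightly different statement, which says nothing over countable fields such as $\overline{\mathbb F_2(t_1,\dots,t_N)}$, whereas the paper's formulation covers them.

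Second, within your contradiction argument there is one step that does not follow as written: from ``$0$ lies in the closure of the image of $S\to T$'' you cannot ``choose a point of $S$ lying over $0$''. The image of $S$ is only a dense constructible subset of $T$ (resp.\ of $T_{\mathbb F_2}$), and it may perfectly well miss the closed point $0$; in that case no curve $C\subseteq S$ of the kind you need exists. The standard repair is to choose the specializing curve inside $T$ itself, passing through $0$ and with generic point $\eta_C$ inside the image of $S$, and then to extend the discrete valuation of $\mathcal O_{C,0}$ to the residue field of a point of $S$ above $\eta_C$, so that the generic fibre over the enlarged discrete valuation ring actually carries the restricted decomposition, before invoking \Cref{CTP}. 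With this modification (and granting, as both you and the paper do, the smoothness of the very general member and the elementary verification that \Cref{ex:1} and \Cref{ex:2} satisfy \Cref{cor surf}), your argument is a correct, if more laborious, version of the paper's proof.
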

By very general we mean that the coefficients of the polynomials defining the conic bundle are algebraically independent over the prime field. 
\begin{proof}
It is enough to degenerate such a very general conic bundle to the above examples \ref{ex:1} and \ref{ex:2} and apply \Cref{cor surf}.
\end{proof}
 
\begin{example}
$B=\mathbb P^2$ with coordinates $x,y,z$. Consider the conic bundle defined by
$$
\begin{matrix}
&\vline & \mathcal O(1) & \mathcal O(1) &  \mathcal O(1)\\
\hline
\mathcal O(1)  &\vline & y^2+j^2z^2 &  & \\
\mathcal O (1) &\vline & xz& yz&\\
\mathcal O (1) &\vline & jx^2+yz & xy & z^2+jy^2
\end{matrix}
$$
with $j$ such that $j^2+j+1=0$. Then the discriminant is
$$\Delta=(x^2z + y^3)(x^2y + z^3)$$
\end{example}
\begin{remark}\label{spieghiamolo ancora}
In order to find such an example, the strategy from \Cref{spieghiamolo} has to be modified because there is no zero in the matrix. The idea is similar, write
	$$
	\begin{matrix}
		&\vline & \mathcal O(1) & \mathcal O(1) &  \mathcal O(1)\\
		\hline
		\mathcal O(1)  &\vline & \alpha &  & \\
		\mathcal O (1) &\vline & \gamma' & \beta&\\
		\mathcal O (1) &\vline & \beta' &\alpha'& \gamma \end{matrix}
	$$
	for a general matrix of this form. First impose that the conics $\alpha',\beta'$and $\gamma'$ meet exactly in $[0:1:0]$ and $[0:0:1]$.
Then complete the matrix so that the discriminant has the desired form $\Delta=(x^2z + y^3)(x^2y + z^3)$. (As already mentioned, such form is guessed based on 
\cite[Theorem 4.3]{Auelconic}.) It is harder to implement this strategy.

An example of this shape might exists over $\mathbb P^3$ (i.e. the quadric surfaces $\alpha',\beta'$and $\gamma'$  meet in a finite number of points and the discriminant has two irreducible components above which only crosses lie.)
This would be of great interest as one would find a variety of dimension four which is irrational because of an $\HH^3$. Arguing through weak Lefschetz one could hope to have examples in any dimension.
\end{remark}

The above example also gives the stable irrationality of some very general conic bundles but this is already in \cite{AuelBrauer}, based on the following example (and a different rationality obstruction).
\begin{example}\label{ex:Auel}
\cite[Section 6]{AuelBrauer}
Consider the conic bundle
$$
\begin{matrix}
&\vline & \mathcal O(1) & \mathcal O(1) &  \mathcal O(1)\\
\hline
\mathcal O(1)  &\vline & xy + xz + z^2 &  & \\
\mathcal O (1) &\vline & x^2+xz+z^2& x^2 + yz + z^2&\\
\mathcal O (1) &\vline & xy &  x^2 + yz + z^2 & y^2 + xz + z^2
\end{matrix}
$$
with discriminant
$$\Delta:=xz(x + z)(y^2x + x^2y + xyz + z^2x + y^3).$$
The fibration is not trivial on $(y^2x + x^2y + xyz + z^2x + y^3)$ by \cite[Lemma 6.8 and Proposition 6.9]{AuelBrauer}. The other three component of the discriminant meet in exactly one point whose fiber is a double line (and the total space is regular there).
\end{example}

 \begin{remark}\label{rem ex double line}
 In characteristic two, it is very easy to construct examples of conic bundles over $\mathbb P^n$ (for any $n$) with reducible discriminant and only double lines over it. It is enough to take a conic bundle of the form
 $$
\begin{matrix}
&\vline & \mathcal O & \mathcal O(n_1) &  \mathcal O(n_2)\\
\hline
\mathcal O  &\vline & 1 &  & \\
\mathcal O (n_1) &\vline & ab& c&\\
\mathcal O (n_2) &\vline & 0&  0& d
\end{matrix} 
$$
and check that the discriminant (with reduced structure) is $ab$ and has the desired properties. For generic choices of $c,d$ the total space of the conic bundle will also be regular (except above the intersection of the two components $a=0$ and $b=0$).
Unfortunately we do not know how to link any of such a construction to \Cref{thm:maincriterioninpractice}. What is missing is a birational transformation allowing to separate the two components $a=0$ and $b=0$, see \Cref{separation double line}.  
\end{remark}
\bibliographystyle{alpha}
\bibliography{bibliografia.bib}
\end{document}